
\documentclass[12pt]{amsart}
\setlength{\evensidemargin}{0in} \setlength{\oddsidemargin}{0in}
\setlength{\textwidth}{6.3in} \setlength{\topmargin}{0in}
\setlength{\textheight}{8.8in}

\usepackage{amsmath,amssymb,amsthm,amsfonts,mathabx,enumerate,array,color,lscape,fancyhdr,layout,pst-all}
\usepackage{xcolor}
\usepackage[english]{babel}
\usepackage{young}
\usepackage{float}
\usepackage[all]{xy}
\usepackage[normalem]{ulem}
\usepackage[active]{srcltx}
\usepackage{mathrsfs}
\usepackage{etoolbox}
\usepackage[colorlinks=true, breaklinks=true, allcolors=blue]{hyperref}

\usepackage{xspace} 
\newlength\yStones
\newlength\xStones
\newlength\xxStones

\makeatletter
\def\Stones{\pst@object{Stones}}
\def\Stones@i#1{%
  \pst@killglue%
  \begingroup%
  \use@par%
  \setlength\xxStones{\xStones}%
  \expandafter\Stones@ii#1,,\@nil
  \endgroup
  \global\addtolength\xStones{0.6cm}%
  \global\addtolength\yStones{-7.5mm}}%
\def\Stones@ii#1,#2,#3\@nil{%
  \rput(\xxStones,\yStones){%
    \psframebox[framesep=0]{%
      \parbox[c][6mm][c]{11mm}{\makebox[11mm]{$#1$}}}}%
  \addtolength\xxStones{1.2cm}%
  \ifx\relax#2\relax\else\Stones@ii#2,#3\@nil\fi}
\makeatother

\setlength\fboxrule{0.4pt}
\def\Stone#1{\fbox{\makebox[12mm]{\strut#1}}\kern2pt}


\newcommand{\C}{\kc}

\newcommand{\Z}{\mathbb{Z}}
\newcommand{\kc}{\Bbbk}

\newcommand{\Q}{\mathbb{Q}}
\newcommand{\g}{\mathfrak{g}}
\newcommand{\h}{\mathfrak{h}}

\newcommand{\boh}{\overline{0}}
\newcommand{\bon}{\overline{1}}
\newcommand{\Ann}{\textup{Ann} \,}
\newcommand{\Supp}{\textup{Supp} \,} 
\newcommand{\inj}{\textup{inj} \,} 
\newcommand{\cG}{\mathcal{G}}
\newcommand{\cA}{\mathcal{A}}
\newcommand{\Specm}{\textup{Specm} \,}
\newcommand{\Hom}{\textup{Hom} \,}

\renewcommand{\k}{{\Bbbk}}

\newtheorem{theorem}{Theorem}[section]
\newtheorem{lemma}[theorem]{Lemma}
\newtheorem{corollary}[theorem]{Corollary}
\newtheorem{proposition}[theorem]{Proposition}

\theoremstyle{definition}
\newtheorem{example}[theorem]{Example}
\newtheorem{remark}[theorem]{Remark}

\newtheorem{definition}[theorem]{Definition}
\allowdisplaybreaks


\newtoggle{details}
\iftoggle{details}{
\newcommand{\details}[1]{{\color{blue}\noindent\textbf{Details:}{#1}}}
                  }{
                  \newcommand{\details}[1]{}
                  }
                

\begin{document}
\begin{title}[Harish-Chandra modules for map and affine Lie superalgebras]{Harish-Chandra modules for map and affine Lie superalgebras}
\end{title}

\author[L. Calixto]{Lucas Calixto}
\address{Departamento de Matem\'atica \\ Instituto de Ci\^encias Exatas \\ UFMG \\ Belo Horizonnte \\ Minas Gerais \\ Brazil}
\email{lhcalixto@ufmg.br}

\author[V. Futorny]{Vyacheslav Futorny}
\address{Shenzhen nternational Center for Mathematics, Southern University of Science and Technology, Shenzhen, China}
\email{vfutorny@gmail.com}

\author[H. Rocha]{Henrique Rocha}
\address{Instituto de Matem\'atica e Estat\'istica \\ Universidade de S\~ao Paulo \\ S\~ao Paulo \\ Brazil}
\email{hrocha@ime.usp.br}


\begin{abstract}
We obtain a classification of simple modules with finite weight multiplicities over  basic classical map superalgebras. Any such module is parabolically induced from a simple cuspidal bounded module over a cuspidal map superalgebra. Further on, any simple cuspidal bounded module is isomorphic to an evaluation module. As an application, we obtain a classification of all simple Harish-Chandra modules for basic classical loop superalgebras. Finally, we show that for affine Kac-Moody Lie superalgebras of type I the Kac induction functor reduces the classification of all simple bounded  modules to the classification of the same class of modules over the even part, whose classification is claimed by Dimitrov and Grantcharov. 
\end{abstract}


\subjclass[2010]{17B10, 17B65}
\keywords{affine Lie superalgebras, weight modules, Harish-Chandra modules}


\maketitle


\tableofcontents    


\section*{Introduction}

Lie superalgebras can be viewed as generalizations of Lie algebras that arose in the 1970's mainly motivated by the development of supersymmetry. Since then, the study of Lie superalgebras and their representations (or modules) has been an active area of research that has proved to have many connections and applications across several areas of Mathematics and Physics \cite{Var04}. Although the representation theory of Lie algebras and Lie superalgebras have some similarities,  the latter is much harder. Instances of this fact can be seen, for example, in the study of the category of finite-dimensional modules over a finite-dimensional simple Lie superalgebra, which is not semisimple in general \cite{Kac78}; or in the study of the primitive spectrum of a universal enveloping superalgebra \cite{Cou16}.

In this paper, we consider classes of Lie superalgebras that have drawn a lot of attention in recent years. Namely, the classes of \emph{map superalgebras} and \emph{affine (Kac-Moody) Lie superalgebras}. Given an affine scheme of finite type $X$ and a Lie superalgebra $\g$, both defined over the same ground field $\kc$, the map superalgebra associated to $X$ and $\g$, denoted by $M(X,\g)$, is the Lie superalgebra of all regular maps from $X$ to $\g$. It is useful to note that the map superalgebra $M(X,\g)$ is isomorphic to the tensor product $\mathcal{G}:=\g \otimes_{\kc} A$, where $A=\mathcal{O}_X(X)$. Map superalgebras provide a unified way of realizing many important classes of Lie superalgebras, such as generalized current superalgebras, loop and multiloop superalgebras, Krichever-Novikov superalgebras. The affine Lie superalgebra $\mathcal{L}(\g)$ and the affine Kac-Moody Lie superalgebra $\widehat{\g}$ are obtained via certain extensions of the loop superalgebra $M(\C^\times, \g)$.

Representation theory of map algebras is fairly well developed. The classification of all simple  finite-dimensional modules was first obtained for loop algebras by Chari and Pressley \cite{Cha86,CP86}, and then generalized for arbitrary map algebras \cite{NSS12}. In the super setting, a combination of results from \cite{Sav14, CMS16, CM19} provides a classification of all simple finite-dimensional modules over any classical map superalgebra. In both, super and non-super cases, the classification relies on a class of modules called (generalized) evaluation modules, which will be also important in this paper (see Section~\ref{sectioon:cuspidaluniformboundmodules}). Other classes of finite-dimensional modules were also studied for map algebras and superalgebras in \cite{FL04,CFK10,FMS15,BCM19,CLS19}. 

A comprehensive program seeking a description of  simple Harish-Chandra modules (or \emph{finite} weight modules as we call them throughout the text), that is, simple 
weight modules with finite-dimensional weight subspaces over a Lie algebra started at the end of the 1980's with works of Fernando, Britten and Lemire \cite{Fer90,BL87,BL99}. The classification of all such modules over finite-dimensional reductive Lie algebras was then obtained by Mathieu in \cite{Mat00}. Similar approaches were also developed for finite-dimensional and affine Kac-Moody Lie (super)algebras in \cite{DMP00,Fut97,DMP04,RF09,GG20,CF21,You20}, and for map algebras over finite-dimensional reductive Lie algebras in \cite{BLL15, Lau18}. 

In the current paper, we make a significant step in the study of infinite-dimensional modules over map superalgebras and in the classification of simple Harish-Chandra modules over affine (Kac-Moody) Lie superalgebras. Namely, we classify all simple Harish-Chandra $\mathcal{G}$-modules, under the assumption that $\g$ is a basic classical Lie superalgebra. We prove that any such module is either cuspidal bounded, or parabolically induced from a cuspidal bounded module over a cuspidal subalgebra of $\cG$. Moreover, we show that any cuspidal bounded module is isomorphic to an evaluation module. This classification  yields in turn a classification of the same class of modules over the affine Lie superalgebra $\mathcal{L}(\g)$. Hence, our first main result is the following.

\begin{theorem}
\begin{itemize}
\item[(i)] Every  simple Harish-Chandra $\mathcal{G}$-module is parabolically induced from a simple bounded cuspidal module over the Levi factor;
\item[(ii)] If $\g_{\boh}$ is semisimple (in particular, this is the case when $\cG$ admits cuspidal modules), then 
 any simple cuspidal bounded $\mathcal{G}$-module is isomorphic to an evaluation module.
 \end{itemize}
\end{theorem}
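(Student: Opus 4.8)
The plan is to follow Fernando's analysis of simple weight modules \cite{Fer90}, in the form adapted to reductive map algebras in \cite{BLL15,Lau18}, and to graft onto it the reductions forced by the odd root spaces. For part (i), fix a Cartan subalgebra $\h\subseteq\g_{\boh}$, write $\Delta$ for the roots of $\g$, and regard $M$ as a weight module for $\h$. The key observation is that $\mathrm{ad}(x_\alpha\otimes a)$ is locally nilpotent on $\cG=\g\otimes A$ for every root vector $x_\alpha$ and every $a\in A$, since $(\mathrm{ad}\,x_\alpha)^k y=0$ for $k\gg 0$ and $(\mathrm{ad}(x_\alpha\otimes a))^k(y\otimes b)=((\mathrm{ad}\,x_\alpha)^k y)\otimes a^k b$. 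Consequently, exactly as in \cite{Fer90}, $\{m\in M:(x_\alpha\otimes a)^N m=0\text{ for some }N\}$ is a $\cG$-submodule, so by simplicity each $x_\alpha\otimes a$ acts on $M$ either injectively or locally nilpotently. I would then reduce the determination of the ``locally nilpotent directions'' to the even roots: an isotropic odd root space $\g_\alpha\otimes A$ is an abelian space of square-zero operators, hence acts locally nilpotently on every module; for a non-isotropic odd root $\alpha$ one has $2\alpha\in\Delta_{\boh}$ and $(x_\alpha\otimes a)^2$ is a nonzero multiple of $x_{2\alpha}\otimes a^2$, so — using that $\{a^2:a\in A\}$ spans $A$ (valid since $\mathrm{char}\,\kc\neq 2$) — the space $\g_\alpha\otimes A$ acts locally nilpotently on $M$ if and only if $\g_{2\alpha}\otimes A$ does. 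Thus the locally nilpotent directions are governed by the even part, and the Fernando-type structure theory (which applies to $\cG$ verbatim in view of the first observation) produces a parabolic subalgebra $\mathfrak p=\mathfrak l\oplus\n$ of $\g$ — its even part being the parabolic of $\g_{\boh}$ supplied by \cite{BLL15,Lau18}, and parabolics of a basic classical Lie superalgebra being determined by those of $\g_{\boh}$ — with $\n\otimes A$ acting locally nilpotently on $M$ and $x_\alpha\otimes a$ acting injectively for every root $\alpha$ of $\mathfrak l$ and every $0\neq a\in A$. Local nilpotency of $\n\otimes A$ together with finite generation of $M$ then yields a nonzero $(\n\otimes A)$-invariant weight vector whose $\mathfrak l\otimes A$-submodule $N$ is simple and killed by $\n\otimes A$, and $M$ is the unique simple quotient of $U(\cG)\otimes_{U(\mathfrak p\otimes A)}N$; by construction $N$ is cuspidal over the Levi factor $\mathfrak l\otimes A$, and it is bounded because a cuspidal module with finite weight multiplicities is automatically uniformly bounded (the usual $\mathfrak{sl}_2\otimes A$ estimates, cf.\ \cite{Mat00,Lau18}). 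This proves (i).

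For part (ii), let $M$ be a simple cuspidal bounded $\cG$-module. Since isotropic odd root spaces always act locally nilpotently, cuspidality forces $\g$ to have no isotropic odd roots and hence $\g_{\boh}$ to be semisimple, which is the content of the parenthetical remark; so assume $\g_{\boh}$ semisimple. Because $\g$ is a simple Lie superalgebra, $\mathrm{Ann}_{\cG}(M)=\g\otimes J$ for an ideal $J\subseteq A$, and the crux is to show that $J$ is \emph{cofinite}. Granting this, $M$ is a simple module over the finite-dimensional $\g\otimes(A/J)\cong\prod_{i=1}^{k}\g\otimes R_i$ with each $R_i$ local artinian; the nilpotent ideal $\g\otimes\mathfrak m_{R_i}$ annihilates $M$ (a nilpotent ideal kills any simple module), so $M$ descends to $\prod_i\g\otimes(R_i/\mathfrak m_{R_i})=\g^{\oplus k}$; a simple module over this product is an outer tensor product $M_1\boxtimes\cdots\boxtimes M_k$ of simple $\g$-modules, each necessarily cuspidal and bounded because $M$ is — that is, $M$ is an evaluation module in the sense of Section~\ref{sectioon:cuspidaluniformboundmodules}, associated to the finite set $\{\mathfrak m_{R_1},\dots,\mathfrak m_{R_k}\}\subseteq\Specm A$.

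The remaining step, cofiniteness of $J$, is the main obstacle. The easy local input is that for a root $\alpha$ and a weight $\lambda$ of $M$ the set $J_{\alpha,\lambda}=\{c\in A:(x_\alpha\otimes c)M_\lambda=0\}$ is an ideal of $A$ of finite codimension, since $\dim\mathrm{Hom}(M_\lambda,M_{\lambda+\alpha})<\infty$ and, by cuspidality, $x_\alpha\otimes c$ acts injectively whenever $c\notin\sqrt{J}$. What is not routine is to assemble these into a single cofinite ideal annihilating all of $M$: the support of $M$ is infinite, so one cannot merely intersect the $J_{\alpha,\lambda}$. Here I would follow Lau's coherent-family strategy \cite{Lau18}: extend $M$ to a semisimple coherent family over $\cG$ whose finite-dimensional constituents are finite-dimensional evaluation modules, use boundedness to show this family is supported at only finitely many points of $\Specm A$, and deduce that $\g\otimes J$ annihilates $M$ for $J$ the intersection of the corresponding maximal ideals. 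This is precisely the point where boundedness — and not merely finiteness of weight multiplicities — is indispensable. Passing to $\g\otimes(A/J)$ as above and invoking Mathieu's classification of cuspidal modules \cite{Mat00} (together with its super analogue) for each factor completes the argument.
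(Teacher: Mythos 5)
Your outline of part (i) is broadly the paper's own route (Fernando's dichotomy for each $x_\alpha\otimes a$, reduction of odd roots to even ones via $(x_\alpha\otimes a)^2$ being a multiple of $x_{2\alpha}\otimes a^2$, then parabolic induction), though you gloss the one genuinely super-specific point: the Levi is \emph{not} read off from the injective roots alone but from the saturation $C_V$ of the cone they generate. For $D(2,1;a)$ every odd root is isotropic, hence locally nilpotent, yet the module can still be cuspidal because $2\alpha\in\Z_+\Delta_{\boh}\subseteq C_V$; this is exactly Remark 4.4(3) and Lemma 4.8 in the paper (via \cite{DMP00}), and it also falsifies your later assertion that ``cuspidality forces $\g$ to have no isotropic odd roots.''

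The serious gap is in part (ii), where you have inverted which step is hard. Cofiniteness of $\Ann_A(V)$ is \emph{not} the obstacle: it holds for every simple finite weight module, bounded or not (Proposition~\ref{proposition:idealcofinite}, following \cite[Proposition~4.3]{BLL15}), so no coherent-family machinery is needed there. The actual crux is your parenthetical ``a nilpotent ideal kills any simple module,'' which is false for infinite-dimensional modules. The ideal $\g\otimes\mathfrak m_{R_i}$ is nilpotent as a Lie ideal, but the standard fact that the nilradical acts nilpotently on modules is a theorem about \emph{finite-dimensional} representations; for an infinite-dimensional simple weight module over, say, $\g\otimes\kc[t]/(t^2)$ there is no a priori reason for $\g\otimes t$ to act trivially. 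Establishing this -- equivalently, that $\Ann_A(V)$ is \emph{radical} -- is Proposition~\ref{proposition:kernelisradicaluniform}, and it is precisely where boundedness enters: one shows $\g^\alpha\otimes N$ acts nilpotently for even $\alpha$ because otherwise the vectors $z^{n-r}y^rv$ would force weight multiplicities to grow without bound, then runs an Engel-type argument plus an $\mathfrak{sl}_2$ computation (using semisimplicity of $\g_{\boh}$) to produce a vector killed by $\g\otimes N$. Your proposal contains no substitute for this. Finally, even after descending to $\g^{\oplus r}$, a simple module is only $V_1\hat\otimes\cdots\hat\otimes V_r$, which may be a proper direct summand of the outer tensor product; to conclude $V$ is literally an evaluation module you need that at most one factor is infinite-dimensional (Lemma~\ref{lemma:maximunoneuniformbounded}, again a boundedness argument), so that all remaining factors are highest weight with $\textup{End}_{\g}(V_i)\cong\kc$ and the hatted tensor product coincides with the honest one.
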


After aplying these results to affine Lie algebras, we study bounded modules over the affine Kac-Moody Lie superalgebras associated to a basic classical Lie superalgebras of type I. It is a well known fact that all simple finite-dimensional modules over $\g$ can be obtained as simple quotients of Kac modules. Moreover, it was shown in \cite{RF09} that in some cases any simple Harish-Chandra module of $\widehat{\g}$ of nonzero level is parabolically induced from a cuspidal simple module over a subalgebra of even part of $\widehat{\g}$, e.g. in the case of $\widehat{\mathfrak{sl}(n|m)}$. This phenomenon was studied  in \cite{CM21} for general superalgebras with finite-dimensional odd part, in which case simple modules are obtained as simple quotients of Kac modules. For
basic classical Lie superalgebras of type I, 
we use the Kac functor to reduce the classification of all bounded simple $\widehat{\g}$-modules of level zero to the classification of the same class of modules over $\widehat{\g}_0$. Namely, we have the second main result.

\begin{theorem} Let $\g$ be a basic classical Lie superalgebras of type I. Then the Kac induction functor 
gives  a bijection between the sets of isomorphism classes of simple bounded Harish-Chandra $\widehat{\g}$-modules and simple bounded Harish-Chandra
 $\widehat{\g}_0$-modules. 
\end{theorem}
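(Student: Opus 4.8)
The plan is to realize the bijection explicitly from the Kac induction functor and an invariants functor in the opposite direction, reducing every nontrivial point to the finite-dimensional type~I situation via the classification obtained above. Since $\g$ is of type~I it carries a $\Z$-grading $\g=\g_{-1}\oplus\g_{\boh}\oplus\g_1$ with $\g_{\pm1}$ abelian and purely odd; tensoring with $\kc[t,t^{-1}]$ and adjoining $c$ and $d$ to the middle piece gives a compatible $\Z$-grading $\widehat{\g}=\widehat{\g}_{-1}\oplus\widehat{\g}_0\oplus\widehat{\g}_1$, where $\widehat{\g}_{\pm1}=\g_{\pm1}\otimes\kc[t,t^{-1}]$ are abelian odd subalgebras and $\widehat{\g}_0=\widehat{\g}_{\boh}$ is the (purely even) affine Kac--Moody algebra with the same canonical central element and degree derivation. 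Let $\widehat{\g}_{\geq0}=\widehat{\g}_0\oplus\widehat{\g}_1$ and $\widehat{\g}_{\leq0}=\widehat{\g}_{-1}\oplus\widehat{\g}_0$ be the two parabolics. Given a $\widehat{\g}_0$-module $M$, inflate it to $\widehat{\g}_{\geq0}$ by letting $\widehat{\g}_1$ act by zero and set $K(M)=U(\widehat{\g})\otimes_{U(\widehat{\g}_{\geq0})}M$; by PBW, $K(M)\cong\Lambda(\widehat{\g}_{-1})\otimes M$ as $\widehat{\g}_0$-modules, so $K(M)$ is a weight module of the same level as $M$. In the other direction, for a $\widehat{\g}$-module $L$ put $L^{\widehat{\g}_1}=\{v\in L:\widehat{\g}_1 v=0\}$; since $[\widehat{\g}_0,\widehat{\g}_1]\subseteq\widehat{\g}_1$, this is a $\widehat{\g}_0$-submodule of $L$.

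First I would treat the passage $M\mapsto L$. Let $M$ be a simple bounded Harish-Chandra $\widehat{\g}_0$-module (necessarily of level zero). Using the Dimitrov--Grantcharov description of $M$ \cite{DG10} as a summand $L^i$ of a loop module assembled from an evaluation module, together with the compatibility of $K$ with evaluation at a point and with the loop construction, one identifies $K(M)$ up to its radical with a tensor product, over distinct points of $\C^\times$, of Kac-induced $\g$-modules; in particular $K(M)$ has finite weight multiplicities and $\widehat{\g}_{-1}$ acts on it locally nilpotently (as $\Lambda(\widehat{\g}_{-1})$ is a locally nilpotent cover). One then shows that $K(M)$ has a unique maximal submodule and lets $\widetilde{K}(M)$ be the resulting simple quotient, which is a simple bounded Harish-Chandra $\widehat{\g}$-module. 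Conversely, let $L$ be a simple bounded Harish-Chandra $\widehat{\g}$-module. The crucial point is that $\widehat{\g}_1$ acts locally nilpotently on $L$, whence $L^{\widehat{\g}_1}\neq0$; granting this, $L=U(\widehat{\g}_{-1})\cdot L^{\widehat{\g}_1}$ (since $L$ is simple and $U(\widehat{\g}_1)$ kills $L^{\widehat{\g}_1}$), and a filtration and associated-graded argument shows that $L^{\widehat{\g}_1}$ is itself a simple bounded Harish-Chandra $\widehat{\g}_0$-module.

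It then remains to see the two assignments are mutually inverse. From $L=U(\widehat{\g}_{-1})\cdot L^{\widehat{\g}_1}$ the universal property of induction yields a surjection $K(L^{\widehat{\g}_1})\twoheadrightarrow L$, so $\widetilde{K}(L^{\widehat{\g}_1})\cong L$ by simplicity of $L$; and the image of the PBW-top $1\otimes M$ generates $\widetilde{K}(M)$ and lies in $\widetilde{K}(M)^{\widehat{\g}_1}$, so $M$ surjects onto the simple $\widehat{\g}_0$-module $\widetilde{K}(M)^{\widehat{\g}_1}$ (simple by the previous paragraph applied to $\widetilde{K}(M)$), forcing $\widetilde{K}(M)^{\widehat{\g}_1}\cong M$. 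Tracking the identifications shows that this bijection is exactly the one induced by the Kac induction functor, as claimed.

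The \emph{main obstacle} is the local nilpotency of $\widehat{\g}_1=\g_1\otimes\kc[t,t^{-1}]$ on a simple bounded Harish-Chandra $\widehat{\g}$-module $L$ --- equivalently $L^{\widehat{\g}_1}\neq0$ --- together with the parallel statement that $K(M)$ admits a unique simple top. In the finite-dimensional case this is automatic: $\g_1$ is abelian and finite-dimensional, so $U(\g_1)=\Lambda(\g_1)$ is finite-dimensional and every element of degree exceeding $\dim\g_1$ annihilates every module. Here $\widehat{\g}_1$ is infinite-dimensional and a bounded module may have support unbounded in the grading direction, so local nilpotency cannot be read off formally and must be deduced from structural information. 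I expect to resolve this by invoking the classification of simple Harish-Chandra $\cG$-modules and of simple bounded $\widehat{\g}$-modules established above: parabolic induction and passage to an evaluation module reduce the question to the individual evaluation factors, each a module over the \emph{finite-dimensional} superalgebra $\g$, on which $\g_1$ is manifestly locally nilpotent; compatibility of $K$ with evaluation and with the loop construction $L^i(\cdot)$ then propagates the conclusion back to $L$ and to $K(M)$.
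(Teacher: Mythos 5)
You have correctly located the crux of the matter: everything reduces to showing that $L^{\widehat{\g}_1}\neq 0$ for a simple bounded Harish-Chandra $\widehat{\g}$-module $L$ (the unique simple top of $K(M)$ and the mutual inverseness are indeed standard once this is known). But your proposed resolution of that crux is circular. You want to deduce it from ``the classification of simple bounded $\widehat{\g}$-modules established above,'' i.e.\ by writing $L$ as a summand $L^i$ of a loop module built from an evaluation module and checking invariants factorwise. No such classification exists in the paper: the evaluation-module classification (Theorem~\ref{theorem:uniformlyboundisevaluation}) concerns $\cG=\g\otimes A$ and, via the level-zero reduction, the affine algebra $\cA(\g)$ \emph{without} the derivation $d$; for $\widehat{\g}=L(\g)\oplus\C c\oplus\C d$ only a family of simple bounded modules is constructed, and the statement that every simple bounded cuspidal $\widehat{\g}$-module is of the form $L^i(V)$ is precisely the open Conjecture of Section~\ref{section:applicationtoaffineliesuperalgebras}. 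Nor can you get around this by restricting $L$ to $L(\g)$: after forgetting $d$ the weight spaces with respect to $\h\oplus\C c$ become infinite-dimensional, so $L|_{L(\g)}$ is not a finite weight module and the $L(\g)$-classification does not apply to it. Relying on \cite{DG10} for the description of $M$ in the forward direction is likewise shaky (that classification is only claimed in unpublished work) and unnecessary, since the unique simple quotient of $K(M)$ follows from standard highest-component arguments. Finally, note that your asserted equivalence ``local nilpotency of $\widehat{\g}_1$ $\Leftrightarrow$ $L^{\widehat{\g}_1}\neq0$'' is not correct as stated: since $[\g_1,\g_1]=0$ and the level is zero, every single element of $\widehat{\g}_1$ squares to zero on $L$, so elementwise nilpotence is automatic and gives no invariant vector for the infinite-dimensional abelian odd algebra $\widehat{\g}_1$.

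The paper closes this gap by a direct, self-contained computation using only the boundedness and level zero, with no appeal to any affine classification. If all weight multiplicities are at most $d$ and $\alpha$ is an odd root of $\g_1$, one starts from a weight vector $v$ with $x_{\alpha+s\delta}v=0$ for $0\le s\le 2d-1$; the linear dependence of $v, x_{\delta}x_{-\delta}v,\dots,x_{d\delta}x_{-d\delta}v$, combined with $c$ acting by zero, forces $x_{-l\delta}\,x_{\alpha+2d\delta}v=0$ for some $l\le d$, and a second linear-dependence argument (applied to vectors of the form $x_{l(d-k)\delta}x_{\alpha+(N+lk)\delta}w$) upgrades this to a nonzero vector annihilated by $x_{\alpha+n\delta}$ for all $n\in\Z$; iterating over the finitely many odd roots of $\g_1$ produces a nonzero element of $L^{\widehat{\g}_1}$. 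To repair your proof you would either have to prove the conjecture you are implicitly invoking or supply a boundedness argument of this direct kind.
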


This paper is organized as follows. In Section~\ref{section:preliminaries} we fix notation and state basic results that will be needed in the subsequent sections. We prove some results regarding tensor product of infinite-dimensional representations in Section~\ref{section:tensorproductheorem}. We stress that these results are quite interesting by their own, as they generalize (to the infinite-dimensional super setting) classical statements which we could not find in the existing literature in the generality we need. In Section~\ref{section:paraboolicinductiontheorem} we prove a parabolic induction theorem for finite weight modules over map superalgebras associated to basic classical Lie superalgebras. The proof is based on the concept of the shadow of a module, which was introduced by Dimitrov, Mathieu and Penkov in \cite{DMP00}, and reduces the classification problem to the problem of classifying all cuspidal bounded modules over cuspidal subalgebras of $\cG$. In Section~\ref{sectioon:cuspidaluniformboundmodules}, we classify all cuspidal bounded modules over cuspidal map superalgebras in terms of evaluation modules. Section~\ref{section:applicationtoaffineliesuperalgebras} is devoted the case of affine (Kac-Moody) Lie superalgebras. Firstly, we apply our results for the particular case of loop superalgebras $M(\C^\times, \g)$ to obtain a description of all finite weight modules over $\mathcal{L}(\g)$. 
Then, we assume that $\g$ is a basic classical Lie superalgebras of type I 
in order to define analogs of Kac $\widehat{\g}$-modules. 
The main result of this section generalizes to the affine setting the reduction results from \cite{RF09} and \cite{CM21} for bounded modules.
\section{Preliminaries}
\label{section:preliminaries}

 In this paper, we fix an uncountable algebraically closed field $\kc$ with characteristic $0$. For a super vector space $V=V_{\boh} \oplus V_{\bon}$, we denote by $|v|$ the parity of a homomogeneous element $v \in V_{i}$, $i \in \Z_2$. The set of linear homomorphisms between two super vector spaces $V$ and $W$ is itself a super vector space, and we will denote it by $\Hom_{\kc}(V,W)$. For any Lie superalgebra $\mathfrak{a}$ we denote by $U(\mathfrak{a})$ its universal enveloping superalgebra. An $\mathfrak{a}$-module is by definition a module over the superalgebra $U(\mathfrak{a})$, hence an $\mathfrak{a}$-module is a super vector space $V = V_{\boh} \oplus V_{\bon}$ such that $\mathfrak{a}_{i} V_{j} \subset V_{i+j}$ for each $i,j \in \Z_2$. A linear map $\phi:V \rightarrow W$ between $\mathfrak{a}$-modules $V,W$ is a homomorphism of $\mathfrak{a}$-modules if $\phi(x v) = (-1)^{|\phi| |x| }x \phi(v)$. Note that we also consider odd homomorphisms between modules and super vector spaces. The set of all $\mathfrak{a}$-module homomorphisms between $V$ and $W$ is denoted by $\mathrm{Hom}_{\mathfrak{a}}(V,W)$, and when $V=W$ it is denoted by $\mathrm{End}_{\mathfrak{a}}(W)$.

 Let $\g = \g_{\boh} \oplus \g_{\bon}$ be a basic classical Lie superalgebra with Cartan subalgebra $\h \subset \g_{\boh}$ (see \cite{Kac77}). Let $A$ be a finitely generated associative commutative algebra with identity over $\kc$, and set $\cG=\g \otimes A$ the map superalgebra associated to $\g$ and $A$. A $\cG$-module $V$ is called a weight module if $V$ is a weight module over $\g\cong \g\otimes \k\subseteq \cG$, that is, if we have a decomposition $V= \bigoplus_{\lambda\in \h^*}V^\lambda$, where $V^\lambda = \{v\in V\mid hv=\lambda(h)v\text{ for all } h\in \h\}$ is the weight space associated to the weight $\lambda$. The support of $V$ is the set $\Supp V = \{\lambda\in \h^*\mid V^\lambda\neq 0\}$. Throughout this article a $\cG$-module is always assumed to be a weight module. A finite weight module is a weight module whose dimensions of all its weight spaces are finite. A finite weight module $V$ is said to be bounded if there is $n\gg 0$ for which $\dim V^\lambda \leq n$ for all $\lambda\in \Supp V$. Notice that $\cG$ is a weight module under the adjoint action, and $\cG^\alpha = \g^\alpha\otimes A$ for all $\alpha\in \Supp \cG$. Thus $\cG$ is a finite weight module if and only if $\dim A<\infty$. Moreover, the root system of $\cG$ is the same as that of $\g$ and it is given by $\Delta := \Supp \cG \setminus \{0\}$. In particular, we have that $\Delta = \Delta_{\boh} \cup \Delta_{\bon}$, and every root of $\cG$ is either in $\Delta_{\boh}$ or in $\Delta_{\bon}$. For each $\alpha \in \Delta$, we have that $\dim \g^{\alpha} = 1$, and hence we will fix a nonzero vector $x_{\alpha} \in \g^{\alpha}$, and we set $h_{\alpha} := [x_{\alpha},x_{-\alpha}]$. When $\alpha \in \Delta_{\boh}$, we will always assume $\left \{x_{\alpha},x_{-\alpha},h_{\alpha} \right\}$ is a $\mathfrak{sl}_2$-triple. We define $\mathcal{B}= \{ x_{\alpha},h_{\alpha} \mid \alpha \in \Delta \}\subseteq \g$, which generates $\g$ as a vector space. We will denote by $\mathcal{Q} = \Z \Delta$ (resp. $\mathcal{Q}_{\boh} = \Z \Delta_{\boh}$) the lattice generated by $\Delta$ (resp. $\Delta_{\boh}$).

Recall that modules over $\g_{\boh}$ on $\g_{\bon}$ are either simple, in which case $\g$ is said to be of \emph{type II}, or it is a direct sum of two simple modules, and we say $\g$ is of \emph{type I}. In Table \ref{table:simplesliesuperalgberas} we list all basic classical Lie superalgebra that are not Lie algebras, together with their even part and their type. Note that $\g_{\boh}$ is either semisimple or a reductive Lie algebra with one-dimensional center. Any basic classical Lie superalgebra $\g$ admits a \emph{distinguished $\Z$-grading} $\g=\bigoplus_{n \in \Z} \g_n$, which is compatible with the $\Z_2$-grading and satisfies:
\begin{enumerate}
    \item if $\g$ is of type I, then $\g_{\boh} = \g_0$, $\g_{\bon} = \g_{-1} \oplus \g_1$, and
    \item if $\g$ is of type I, then $\g_{\boh} = \g_{-2} \oplus \g_0 \oplus \g_{2}$, $\g_{\bon} = \g_{-1} \oplus \g_1$.
\end{enumerate}

\begin{table}
 \begin{tabular}{|l   l  l |} 
 \hline
 $\g$ & $\g_{\boh}$ & Type \\ [0.5ex] 
 \hline\hline
 $A(m,n)$, $m>n\geq 0$ &  $A_m \oplus A_n \oplus \kc$ & I  \\ 
 \hline
 $A(n,n)$, $n \geq 1$ & $A_n \oplus A_n$ & I  \\
 \hline
 $C(n+1)$, $n \geq 1$ & $C_n \oplus \kc$ & I  \\
 \hline
 $B(m,n)$, $m\geq 0$, $n \geq 1$ & $B_m \oplus C_n$ & II  \\
 \hline
 $D(m,n)$, $m\geq 2$, $n\geq 1$ & $D_m \oplus C_n$ & II  \\ 
 \hline
 $F(4)$ & $A_1 \oplus B_3$ & II  \\
 \hline
 $G(3)$ & $A_1 \oplus G_2$ & II  \\
 \hline
 $D(2,1,a)$, $a \neq 0,-1$ & $A_1 \oplus A_1 \oplus A_1$ & II  \\
 \hline
\end{tabular}
\vspace{0.5cm}
\caption{Basic classical Lie
superalgebras that are not Lie
algebras, their even
part and their type}
\label{table:simplesliesuperalgberas}
\end{table}

The analogue for Lie superalgebras of the Poincaré-Birkhoff-Witt Theorem (PBW Theorem) is formulated in the following way (see \cite[Chapter §2, Section 3]{Sch79}).
\begin{lemma}
   Let $\mathfrak{a}$ be a Lie superalgebra over $\kc$. Let $B_0, B_1$ be totally ordered bases of $\mathfrak{a}_{\boh},\mathfrak{a}_{\bon}$, respectively. Then the standard monomials
   $$u_1\dots u_r v_1 \dots v_s, \ u_1,\dots,u_r \in B_0, \ v_1,\dots,v_s \in B_1, u_1 \leq \dots \leq u_r, \ v_1<\dots <v_s ,$$
   form a basis for the universal enveloping superalgebra $U(\mathfrak{a})$. In particular, we have an isomorphism of vector spaces
    $$U(\mathfrak{a}) \cong U \left (\mathfrak{a}_{\boh} \right ) \otimes \bigwedge \left (\mathfrak{a}_{\bon} \right ).$$
\end{lemma}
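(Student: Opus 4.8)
The plan is to run the classical Poincaré–Birkhoff–Witt argument, adapted to the $\Z_2$-graded (signed) setting. Fix on $B := B_0 \cup B_1$ the total order extending the given orders on $B_0$ and $B_1$ in which every element of $B_0$ precedes every element of $B_1$; then a standard monomial is exactly an ordered word in $B$ that is weakly increasing, with strict increase required among letters from $B_1$. Write $U(\mathfrak{a}) = T(\mathfrak{a})/I$, where $I$ is the two-sided ideal generated by the elements $x\otimes y - (-1)^{\bar x\,\bar y}\, y\otimes x - [x,y]$ for homogeneous $x,y\in\mathfrak{a}$. As usual, the statement splits into two halves: the standard monomials span $U(\mathfrak{a})$, and they are linearly independent.

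For spanning, I would show by a double induction — on the length of a monomial in the letters $B$ and, for fixed length, on its number of ``inversions'' (out-of-order adjacent pairs, plus repetitions of odd letters) — that every such monomial is congruent modulo $I$ to a linear combination of standard monomials of no greater length. Each straightening move — replacing an out-of-order adjacent pair $b_i b_j$ (with $b_i > b_j$) by $(-1)^{\bar b_i\,\bar b_j} b_j b_i + [b_i,b_j]$, or replacing $v^2$ for $v\in B_1$ by $\tfrac12[v,v]$ — either strictly decreases the inversion count at the same length or produces only monomials of strictly smaller length (the bracket terms), so the process terminates; and it uses nothing beyond the existence of the bracket, no Jacobi identity.

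Linear independence is the crux, and I would obtain it, as in the Lie algebra case, by constructing an explicit action of $\mathfrak{a}$ on the vector space $V$ with basis the abstract standard monomials — equivalently, on $U(\mathfrak{a}_{\boh})\otimes\bigwedge(\mathfrak{a}_{\bon})$, via the classical PBW basis of the first factor. A generator $b\in B$ acts on a standard monomial $m = b_{i_1}\cdots b_{i_k}$ by ``prepend and re-sort'': if $b \le b_{i_1}$, with $b\neq b_{i_1}$ when $b$ is odd, set $b\cdot m := b\,m$; otherwise set $m' := b_{i_2}\cdots b_{i_k}$ and define recursively $b\cdot m := (-1)^{\bar b\,\bar b_{i_1}} b_{i_1}\,(b\cdot m') + [b,b_{i_1}]\cdot m'$, and $b\cdot m := \tfrac12[b,b]\cdot m'$ if $b = b_{i_1}$ is odd, with the brackets expanded in the basis $B$ and everything extended linearly. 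Two points must be checked: that the recursion is well defined (it terminates, as each recursive call lowers either the length of the monomial or the index $i_1$), and — crucially — that it defines a representation of the Lie superalgebra, i.e. $b\cdot(c\cdot m) - (-1)^{\bar b\,\bar c}\, c\cdot(b\cdot m) = [b,c]\cdot m$ for all $b,c\in B$ and all standard $m$.

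The hard part is precisely this last identity. By induction on the length of $m$ one reduces to the case where $b$ and $c$ are already in the right position relative to the leading letter of $m$ (the other cases are unwound by the recursion), and there the equality comes down, after the sign bookkeeping, to the super Jacobi identity for $\mathfrak{a}$ applied to $b$, $c$, and the leading generator of $m$ — this is the only place the Lie superalgebra axioms beyond antisymmetry enter. Granting it, the universal property of $U(\mathfrak{a})$ gives an algebra homomorphism $\rho\colon U(\mathfrak{a})\to \mathrm{End}_{\kc}(V)$, and a short induction shows $\rho(u_1\cdots u_r v_1\cdots v_s)(1)$ equals the corresponding basis vector of $V$ for every standard monomial; hence a vanishing linear combination of standard monomials in $U(\mathfrak{a})$ is sent by $\rho(\cdot)(1)$ to the same combination in $V$, forcing the coefficients to vanish. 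The displayed vector-space isomorphism $U(\mathfrak{a})\cong U(\mathfrak{a}_{\boh})\otimes\bigwedge(\mathfrak{a}_{\bon})$ is then immediate, since the standard monomials biject with the products of a classical PBW monomial in $\mathfrak{a}_{\boh}$ and a wedge monomial in $\mathfrak{a}_{\bon}$. (Alternatively, one can avoid the representation argument by passing to the Grassmann envelope $(\mathfrak{a}_{\boh}\otimes\Lambda_{\boh})\oplus(\mathfrak{a}_{\bon}\otimes\Lambda_{\bon})$ of $\mathfrak{a}$ with an infinite-dimensional Grassmann algebra $\Lambda$, applying ordinary PBW there and descending; but the signs and the comparison of the two enveloping algebras demand comparable care, so I would prefer the direct route above — which is also the one in the cited reference.)
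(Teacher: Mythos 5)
The paper does not prove this lemma at all --- it simply quotes the super PBW theorem from Scheunert \cite[Chapter \S2, Section 3]{Sch79} --- and your argument is precisely the standard proof given in that reference: straightening to show the standard monomials span, and an explicit ``prepend and re-sort'' action on the span of abstract standard monomials, with the super Jacobi identity entering only in the verification of the representation property, to get linear independence. Your sketch is sound (characteristic $0$ legitimizes the $v^2=\tfrac12[v,v]$ step, and the remaining checks you flag are exactly the routine ones), so it is essentially the same approach as the paper's cited source.
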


For any $\mathcal{G}$-module $V$, define
$$\Ann_A(V) := \left \{ a \in A \mid (\g \otimes a)V = 0  \right \}.$$

The following results reduces the study of simple finite weight modules over an arbitrary map superalgebra to the study of simple finite weight modules over certain finite-dimensional map superalgebras.

\begin{proposition}{\cite[Proposition~8.1]{Sav14}}
    If $\mathcal{I}$ an ideal of $\mathcal{G}$, then exists an ideal $I$ of $A$ such that $\mathcal{I}=\g \otimes I$.
\end{proposition}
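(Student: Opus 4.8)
The plan is to use that $\g$, being basic classical, is a finite-dimensional simple Lie superalgebra, so that its adjoint representation is irreducible, and to combine this with the decomposition of $\cG = \g \otimes A$ as a $\g$-module. First I would note that any ideal $\mathcal{I}$ of $\cG$ is stable under the adjoint action of $\g \otimes 1 \cong \g$, hence is a $\g$-submodule of $\cG$. Viewing $\cG = \g \otimes A$ as a $\g$-module via the first tensor factor, $\cG$ is a (possibly infinite) direct sum of copies of the adjoint module, indexed by a $\kc$-basis of $A$; since $\g$ is finite-dimensional and simple, this module is irreducible and $\textup{Hom}_\g(\g,\g) = \kc$ by Schur's lemma over the algebraically closed field $\kc$. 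Consequently $\cG$ is a $\g$-isotypic semisimple $\g$-module, and every $\g$-submodule of $\g \otimes A$ has the form $\g \otimes I$ for a unique $\kc$-subspace $I \subseteq A$. In particular $\mathcal{I} = \g \otimes I$.

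If one prefers an argument that does not appeal to semisimplicity, the subspace $I$ can be produced by hand: fix $\alpha \in \Delta$ and put $I := \{ a \in A \mid x_\alpha \otimes a \in \mathcal{I}\}$. If $a \in I$, then because the $\g$-submodule of the adjoint module generated by $x_\alpha$ is all of $\g$, applying suitable elements of $U(\g \otimes 1)$ to $x_\alpha \otimes a$ yields $y \otimes a \in \mathcal{I}$ for every $y \in \g$; in particular $I$ does not depend on $\alpha$ and $\g \otimes I \subseteq \mathcal{I}$. Conversely, $\mathcal{I}$ is stable under $\h \otimes 1$, so it decomposes into $\h \otimes 1$-weight spaces
\[
\mathcal{I} = \Bigl( \bigoplus_{\alpha \in \Delta} \mathcal{I} \cap (\kc x_\alpha \otimes A) \Bigr) \oplus \bigl( \mathcal{I} \cap (\h \otimes A) \bigr);
\]
the nonzero-weight summands equal $x_\alpha \otimes I$ by the definition of $I$, while for an element $\sum_j h_j \otimes b_j$ of the zero-weight summand one brackets with the various $x_\alpha \otimes 1$ and uses that the roots of $\g$ span $\h^*$ (equivalently, that the simple superalgebra $\g$ has trivial center) to conclude each $b_j \in I$. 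Hence $\mathcal{I} \subseteq \g \otimes I$, so $\mathcal{I} = \g \otimes I$.

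It then remains to check that $I$ is an ideal of $A$, and here I would use that $\g$ is perfect, $[\g,\g] = \g$. From the routine identity $[\g \otimes A, \g \otimes I] = [\g,\g] \otimes (AI)$ (which follows by expanding $[x \otimes f, y \otimes g] = [x,y] \otimes fg$ and writing elements of $[\g,\g]$ as sums of brackets) we get
\[
\g \otimes (AI) = [\g,\g] \otimes (AI) = [\g \otimes A, \g \otimes I] = [\cG, \mathcal{I}] \subseteq \mathcal{I} = \g \otimes I,
\]
and comparing second tensor factors yields $AI \subseteq I$. Thus $I$ is an ideal of $A$ and $\mathcal{I} = \g \otimes I$, as claimed.

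I expect the only real obstacle to be the identification $\mathcal{I} = \g \otimes I$, and within it the zero-weight (Cartan) component: unlike the root spaces it is not one-dimensional, so it is not controlled by naive root-space bookkeeping, and one must either invoke the isotypic-module description above or run the explicit bracketing argument against a spanning set of roots. This is also essentially the only point where the hypothesis that $\g$ is basic classical (rather than an arbitrary Lie superalgebra) is used — through irreducibility of the adjoint representation, Schur's lemma for it, and the vanishing of the center of $\g$; the perfectness $[\g,\g] = \g$ used at the end is likewise a feature of simplicity.
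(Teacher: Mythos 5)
Your argument is essentially correct, but note that the paper itself gives no proof of this statement: it is quoted directly from \cite[Proposition~8.1]{Sav14}, so there is no internal proof to compare against; your write-up is a self-contained substitute for that citation. Two small points deserve attention. First, in your isotypic-decomposition argument the appeal to ``Schur's lemma over the algebraically closed field'' is slightly too quick in the super setting: the super Schur lemma (the paper's own Lemma \ref{lemma:lemmadeschur}) allows $\textup{End}_{\g}(\g)=\kc\,id\oplus\kc\sigma$ with $\sigma$ odd, and you must rule out an odd endomorphism of the adjoint module before concluding that every submodule of $\g\otimes A$ is of the form $\g\otimes I$. This is easy for basic classical $\g$ — for instance, $0$ is a weight of $\g_{\boh}$ (on $\h$) but not of $\g_{\bon}$, so $\g_{\boh}\not\cong\g_{\bon}$ as $\h$-modules and no odd $\sigma$ exists — but it should be said. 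Second, your ``by hand'' argument quietly uses $\dim\g^{\alpha}=1$ for all $\alpha\in\Delta$ when identifying the nonzero-weight components with $x_{\alpha}\otimes I$; this matches the paper's standing convention, and in the zero-weight step you should normalize the element so that the $h_j$ are linearly independent before using that the roots span $\h^{*}$ to invert the system $\sum_j\alpha(h_j)b_j\in I$. With those touches, both routes are sound: the hands-on version avoids the Schur subtlety altogether, and the final step ($AI\subseteq I$ via $[\g,\g]=\g$) is exactly where simplicity/perfectness enters, as you say.
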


The proof of the following proposition is similar to the Lie algebra case. See \cite[Proposition 4.3]{BLL15}.
\begin{proposition}\label{proposition:idealcofinite}
    If $V$ is a simple finite weight $\mathcal{G}$-module, then $A/\Ann_A(V)$ is a finite-dimensional algebra.
\end{proposition}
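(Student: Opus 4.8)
The plan is to construct a cofinite ideal $J$ of $A$ with $J\subseteq\Ann_A(V)$; since $A$ is finitely generated this forces $\dim_\kc A/\Ann_A(V)<\infty$, and it also makes sense of the phrase ``$A/\Ann_A(V)$ is an algebra'' because $\Ann_A(V)$ is itself an ideal: $\g$ is basic classical, hence perfect, so each $y\in\g$ is a sum of brackets, and from $[p\otimes a,q\otimes b]=(p\otimes a)(q\otimes b)-(-1)^{|p||q|}(q\otimes b)(p\otimes a)$ in $U(\cG)$ one sees that $a\in\Ann_A(V)$ forces $(\g\otimes ab)V=0$ for every $b\in A$.

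The key point is which vector to work with. Fix $\mu\in\Supp V$. The abelian superalgebra $\h\otimes A$ preserves the finite-dimensional space $V^\mu$ and acts there by pairwise commuting operators, so over the algebraically closed field $\kc$ it has a common eigenvector $0\neq v\in V^\mu$, say $(h\otimes b)v=\chi(h\otimes b)\,v$ for all $h\in\h$, $b\in A$, with $\chi\in(\h\otimes A)^*$; by simplicity $V=U(\cG)v$. I would then take
\[
Q:=\{\,a\in A\mid (x_\gamma\otimes a)v=0\ \text{for all}\ \gamma\in\Delta\,\},
\]
which is cofinite because $a\mapsto\big((x_\gamma\otimes a)v\big)_{\gamma\in\Delta}$ is a linear map from $A$ into the finite-dimensional space $\bigoplus_{\gamma\in\Delta}V^{\mu+\gamma}$ with kernel $Q$. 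The eigenvector choice makes $Q$ an \emph{ideal}: for $a\in Q$, $b\in A$, and $\gamma\in\Delta$, choose $h\in\h$ with $\gamma(h)\neq0$; then $\gamma(h)(x_\gamma\otimes ba)v=(h\otimes b)(x_\gamma\otimes a)v-(x_\gamma\otimes a)(h\otimes b)v$, and both terms vanish — the first since $(x_\gamma\otimes a)v=0$, the second since $(h\otimes b)v\in\kc v$ — so $ba\in Q$.

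Next I would show that $(\g\otimes Q^2)v=0$, where the ideal product $Q^2$ is still cofinite ($A$ is Noetherian, so $Q/Q^2$ is a finitely generated $A/Q$-module, hence finite-dimensional, hence $A/Q^2$ is finite-dimensional). The root-vector part is immediate from $Q^2\subseteq Q$; for the Cartan part, for $a,b\in Q$ and $\delta\in\Delta$ the identity $[x_\delta\otimes a,x_{-\delta}\otimes b]=h_\delta\otimes ab$ expresses $(h_\delta\otimes ab)v$ as a $\pm$-combination of $(x_\delta\otimes a)(x_{-\delta}\otimes b)v$ and $(x_{-\delta}\otimes b)(x_\delta\otimes a)v$, and both of these are zero because $a,b\in Q$ and $\pm\delta\in\Delta$. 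Since the $h_\delta$ ($\delta\in\Delta$) span $\h$, this gives $(\h\otimes Q^2)v=0$, hence $(\g\otimes Q^2)v=0$. Now set $J:=Q^2$: as $\g$ is perfect and $J$ is an ideal of $A$, the subspace $\g\otimes J$ is a Lie superideal of $\cG$, so $U(\cG)(\g\otimes J)$ is a two-sided ideal of $U(\cG)$ which kills $v$, hence kills $V=U(\cG)v$. Therefore $(\g\otimes J)V=0$, i.e.\ $J\subseteq\Ann_A(V)$, and we are done.

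The main obstacle is the first step of the construction: using a simultaneous eigenvector for all of $\h\otimes A$ rather than a mere $\h$-weight vector. This is precisely what makes the ``leakage'' terms such as $(x_\gamma\otimes a)(h\otimes b)v$ collapse to multiples of $(x_\gamma\otimes a)v$, and what allows a product of two elements of $Q$ to annihilate $v$ against both $x_\delta\otimes a$ and $x_{-\delta}\otimes b$; with a generic weight vector these terms would move to neighbouring weight spaces and the argument would fail. The remaining points — that $U(\cG)(\g\otimes J)$ is two-sided and annihilates all of $V$ once it annihilates the generator $v$, and the tracking of Koszul signs in the few brackets used — are routine.
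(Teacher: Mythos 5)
Your proof is correct. The paper gives no written-out argument for this proposition --- it only remarks that the proof is ``similar to the Lie algebra case'' and cites \cite[Proposition 4.3]{BLL15} --- and your write-up is essentially that argument carried out in full and adapted to the super setting: a cofinite kernel of $a\mapsto \big((x_\gamma\otimes a)v\big)_{\gamma\in\Delta}$ in $\bigoplus_{\gamma}V^{\mu+\gamma}$, the brackets $[x_\delta\otimes a,x_{-\delta}\otimes b]=h_\delta\otimes ab$ to reach the Cartan part (which works uniformly for even and odd $\delta$, so no separate treatment of odd roots is needed), and perfectness of $\g$ so that $\g\otimes Q^2$ is an ideal of $\cG$ annihilating the simple module once it annihilates the generator. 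The choice of a simultaneous eigenvector for $\h\otimes A$ on the finite-dimensional space $V^\mu$ is exactly the device needed to make $Q$ an ideal of $A$, and the auxiliary facts you invoke --- that the $h_\delta$, $\delta\in\Delta$, span $\h$, and that a basic classical $\g$ is simple, hence perfect --- are part of the paper's standing conventions (it records that $\mathcal{B}=\{x_\alpha,h_\alpha\mid\alpha\in\Delta\}$ spans $\g$). So this is a complete, self-contained proof along the same lines as the one the paper points to.
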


\section{A tensor product theorem}
\label{section:tensorproductheorem}

This section is devoted to prove an infinite-dimensional version of a classical result proved by Cheng \cite{Che95}. Namely, that if $V_i$ is a simple module over a Lie superalgebra $\g_i$, for $i=1,2$, then the $(\g_1\oplus \g_2)$-module $V_1\otimes V_2$ is either simple or there is a simple submodule $V\subseteq V_1\oplus V_2$ for which $V_1\otimes V_2\cong V\oplus V$. We start with a generalization of Schur's lemma to the super infinite-dimensional setting. 

\begin{lemma}[Schur's Lemma]\label{lemma:lemmadeschur}
Let $\g$ be a Lie superalgebra and $V$ be a simple $\g$-module. Then one of the following statements hold:
\begin{enumerate} 
    \item $\textup{End}_{\g} (V) = \textup{End}_{\g} (V)_{\boh} = \kc id$,
    
    \item $\textup{End}_{\g}(V)= \kc id \oplus \kc \sigma$, where $\sigma^2=id$ is an odd element that permutes $V_{\boh}$ and $V_{\bon}$. In particular, $\sigma$ provides an isomorphism of $\g_{\boh}$-modules between $V_{\boh}$ and $V_{\bon}$.
\end{enumerate}
\end{lemma}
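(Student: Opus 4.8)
The plan is to work throughout with the $\Z_2$-grading on the associative algebra $\textup{End}_\g(V)$ and to reduce every assertion to the behaviour of homogeneous endomorphisms. The first observation is that a nonzero homogeneous $\phi\in\textup{End}_\g(V)$ is automatically invertible: its kernel and image are $\Z_2$-graded $\g$-submodules of $V$ (kernels and images of homogeneous linear maps are graded, and they are $\g$-stable because $\phi$ (super)commutes with the $\g$-action), so simplicity of $V$ forces $\ker\phi=0$ and $\im\phi=V$, and the inverse $\phi^{-1}$ is then again homogeneous of the same parity. In particular $\textup{End}_\g(V)_\boh$ is a division algebra over $\kc$. Fixing any $0\neq v\in V$, the evaluation map $\phi\mapsto\phi(v)$ is injective on $\textup{End}_\g(V)_\boh$ (an endomorphism killing $v$ has nonzero kernel, hence vanishes), so $\dim_\kc\textup{End}_\g(V)_\boh\le\dim_\kc V$. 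Since a simple module is cyclic we have $\dim_\kc V\le\dim_\kc U(\g)$, which is at most countable in the situations of interest, while $\kc$ is uncountable and algebraically closed; the usual division-algebra argument (Dixmier's lemma) then gives $\textup{End}_\g(V)_\boh=\kc\, id$. This is the one point that genuinely uses the infinite-dimensional hypotheses and is, to my mind, the only non-formal step.

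It then remains to describe $\textup{End}_\g(V)_\bon$. If it is zero, then $\textup{End}_\g(V)=\textup{End}_\g(V)_\boh=\kc\, id$ and we are in alternative (1). Otherwise choose $0\neq\tau\in\textup{End}_\g(V)_\bon$; by the previous step it is invertible, so it restricts to a linear isomorphism $V_\boh\to V_\bon$. Since $\g_\boh$ is purely even, $\tau$ commutes with $\g_\boh$ in the ordinary sense, so this restriction is an isomorphism of $\g_\boh$-modules $V_\boh\cong V_\bon$, which is the first claim of alternative (2). Next, $\tau^2$ is an even element of $\textup{End}_\g(V)$, hence $\tau^2=c\, id$ with $c\in\kc$, and $c\neq 0$ because $\tau$ is invertible; as $\kc$ is algebraically closed we may pick $d\in\kc$ with $d^2c=1$ and set $\sigma:=d\tau$, so that $\sigma$ is odd and invertible, interchanges $V_\boh$ and $V_\bon$, and satisfies $\sigma^2=id$.

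Finally, for any $\psi\in\textup{End}_\g(V)_\bon$ the composition $\sigma\psi$ lies in $\textup{End}_\g(V)_\boh=\kc\, id$ (it is a product of two odd endomorphisms), whence $\psi=\sigma^2\psi=\sigma(\sigma\psi)\in\kc\sigma$; thus $\textup{End}_\g(V)_\bon=\kc\sigma$ and $\textup{End}_\g(V)=\kc\, id\oplus\kc\sigma$, which is alternative (2). The two alternatives are mutually exclusive since the algebra in (2) is two-dimensional, so exactly one of them holds. Beyond the infinite-dimensional Schur step, the only care needed is bookkeeping: that ``simple'' is understood in the graded sense, and that the sign conventions defining $\textup{End}_\g(V)$ are the standard ones, under which it is a graded subalgebra of $\textup{End}_\kc(V)$, homogeneous endomorphisms have graded kernels and images, and even ones commute with $\g$ on the nose.
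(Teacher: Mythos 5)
Your proof is correct and follows essentially the same route as the paper: reduce to the even part via Dixmier's infinite-dimensional Schur lemma, show any nonzero odd endomorphism is invertible by looking at its kernel and image, normalize it so its square is the identity, and conclude the odd part is one-dimensional. The only difference is cosmetic — you spell out the cardinality hypothesis behind the Dixmier step and the rescaling $d^2c=1$, which the paper leaves implicit.
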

\begin{proof}
       We know that $\textup{End}_{\g} (V) = \textup{End}_{\g} (V)_{\boh} \oplus \textup{End}_{\g} (V)_{\bon}$, where $\textup{End}_{\g} (V)_{\boh} = \kc id$ by the infinite-dimensional Schur's Lemma (due to Dixmier) see \cite[Proposition 2.6.5, Corollary 2.6.6]{Dix96}. If $\textup{End}_{\g} (V)_{\bon} = 0$, then we have case $(1)$. Suppose $\textup{End}_{\g} (V)_{\bon} \neq 0$, and take a nonzero element $\sigma \in \textup{End}_{\g} (V)_{\bon}$. Since $\textup{ker} \, \sigma$ is a proper $\g$-submodule of $V$, we see that $\textup{ker} \, \sigma = 0$. On the other hand, as $\textup{im} \, \sigma $ is a nonzero $\g$-submodule of $V$, we have that $\textup{im} \, \sigma = V$.  In particular, for $\sigma^2 \in \textup{End}_{\g} (V)_{\boh}$ we obtain $\sigma^2= K id$ for some nonzero $K \in \kc$. We can assume $K=1$. Now let $\tau \in \textup{End}_{\g} (V)_{\bon}$ be a nonzero element. As before we get that $\sigma \circ \tau = K_1 id$ for a nonzero $K_1 \in \kc$, which implies $\sigma^2 \circ \tau =id \circ \tau=\tau = K_1 \sigma \circ id = K_1 \sigma$.
\end{proof}

\begin{lemma}\label{lemma:densitytheoremversion2}
Let $\g$ be a Lie superalgebra, and let $V$ be a simple $\g$-module.
\begin{enumerate}
    \item Assume that $\textup{End}_{\g}(V)_{\bon} =  0$, that $v_1,\dots,v_n \in V$ are linearly independent vectors and that $w_1,\dots,w_n \in V$ are arbitrary elements. Then there exists an element $u \in U(\g)$ such that $u v_i=w_i$.
    
    \item Assume that $\textup{End}_{\g}(V)_{\bon}\neq 0$, that $v_1,\dots,v_n \in V_{i}$ are linearly independent vectors and that $w_1,\dots,w_n \in V_i$ are arbitrary elements, where $i \in \{ \boh, \bon \}$. Then there exists an even element $u \in U(\g)_{\boh}$ such that $u v_{i}=w_i$.
\end{enumerate}
\end{lemma}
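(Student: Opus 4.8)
The plan is to reduce both parts to the classical Jacobson density theorem for associative rings, fed by the version of Schur's lemma just proved in Lemma~\ref{lemma:lemmadeschur}. Throughout, recall that a map of $\g$-modules is the same as a map of $U(\g)$-modules, so $\textup{End}_\g(V)=\textup{End}_{U(\g)}(V)$ as $\Z_2$-graded algebras.

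\emph{Part (1).} The hypothesis $\textup{End}_\g(V)_{\bon}=0$ together with Lemma~\ref{lemma:lemmadeschur}(1) gives $\textup{End}_{U(\g)}(V)=\k\,\textup{id}$. Since $V$ is a simple $U(\g)$-module with commutant $\k$, the Jacobson density theorem says that the image of $U(\g)$ in $\textup{End}_\k(V)$ is dense: for $\k$-linearly independent $v_1,\dots,v_n$ and arbitrary $w_1,\dots,w_n$ there is $u\in U(\g)$ with $u v_i=w_i$. (If a self-contained argument is preferred, this is the standard induction on $n$: simplicity of $V$ provides, for each $i$, an element of $U(\g)$ killing the other $v_j$ but not $v_i$, and suitable combinations of these hit the prescribed targets.)

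\emph{Part (2).} By Lemma~\ref{lemma:lemmadeschur}(2) there is an odd $\sigma\in\textup{End}_\g(V)$ with $\sigma^2=\textup{id}$ that interchanges $V_{\boh}$ and $V_{\bon}$. Iterating the relation $\sigma(xv)=(-1)^{|x|}x\sigma(v)$ ($x\in\g$ homogeneous) defining an odd module endomorphism, one obtains $\sigma(uv)=u\sigma(v)$ for every $u\in U(\g)_{\boh}$; hence $\sigma$ is an isomorphism of $U(\g)_{\boh}$-modules $V_{\boh}\xrightarrow{\ \sim\ }V_{\bon}$. Next I claim that $V_{\boh}$ and $V_{\bon}$ are simple $U(\g)_{\boh}$-modules. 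Indeed, if $0\neq W\subseteq V_{\boh}$ is a $U(\g)_{\boh}$-submodule, set $W':=W+U(\g)_{\bon}W$; using $U(\g)_{\boh}U(\g)_{\bon}\subseteq U(\g)_{\bon}$ and $U(\g)_{\bon}U(\g)_{\bon}\subseteq U(\g)_{\boh}$ one checks directly that $W'$ is a $\Z_2$-graded $U(\g)$-submodule of $V$, so $W'=V$ by simplicity; since $U(\g)_{\bon}W\subseteq V_{\bon}$, comparing even parts gives $W=V_{\boh}$, and the statement for $V_{\bon}$ follows via the isomorphism $\sigma$. Applying the infinite-dimensional Schur's lemma (\cite[Problem~2.3.18]{Etin11}, exactly as in the proof of Lemma~\ref{lemma:lemmadeschur}) to these simple modules yields $\textup{End}_{U(\g)_{\boh}}(V_i)=\k\,\textup{id}$ for $i\in\{\boh,\bon\}$. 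Now the Jacobson density theorem for the ring $U(\g)_{\boh}$ acting on the simple module $V_i$ produces $u\in U(\g)_{\boh}$, which is automatically an even element of $U(\g)$, with $u v_j=w_j$ for all $j$.

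The one step that is not a direct citation is the simplicity of $V_{\boh}$ and $V_{\bon}$ over $U(\g)_{\boh}$ in part (2); this is where I would be most careful, and everything else is an application of density together with Schur. I would also note that the appeal to the infinite-dimensional Schur's lemma for $U(\g)_{\boh}$ is legitimate because $\dim_\k U(\g)_{\boh}$ is small relative to $|\k|$ under the standing assumptions on the ground field.
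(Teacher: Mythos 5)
Your proposal reaches the correct conclusions, but it is organized differently from the paper, which disposes of both parts at once by citing the super Jacobson density theorem (\cite[Proposition 8.2]{Che95}) together with Lemma~\ref{lemma:lemmadeschur}, adding only the observation that in (2) the element $u$ may be taken even: writing $u=u_{\boh}+u_{\bon}$, the component $u_{\bon}v_j$ lies in the opposite parity to $w_j$, so $u_{\boh}v_j=w_j$ already. Your treatment of (2) is a genuinely different route and it is correct: the remark that $W+U(\g)_{\bon}W$ is a graded $U(\g)$-submodule for any nonzero $U(\g)_{\boh}$-submodule $W\subseteq V_{\boh}$ does show that $V_{\boh}$ and $V_{\bon}$ are simple $U(\g)_{\boh}$-modules, and Dixmier's Schur lemma plus the classical (ungraded) density theorem for $U(\g)_{\boh}$ then produce an even $u$ directly. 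What this buys is a self-contained argument using only classical tools; what the paper's citation buys is brevity and uniformity, since Cheng's statement treats both the $\textup{End}_{\g}(V)_{\bon}=0$ and $\textup{End}_{\g}(V)_{\bon}\neq 0$ cases simultaneously, with $\kc\,\textup{id}\oplus\kc\sigma$ playing the role of the division ring. Your cardinality caveat about applying Dixmier's lemma to $U(\g)_{\boh}$ is at the same level of rigor as the paper's own use of it in Lemma~\ref{lemma:lemmadeschur}, so no complaint there.

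The one step in your write-up that is not airtight is part (1). The classical Jacobson density theorem requires $V$ to be simple over $U(\g)$ in the \emph{ungraded} sense, whereas the standing hypothesis is simplicity as a $\Z_2$-graded module; these differ in general (the natural module over the rank-one Clifford superalgebra is graded-simple but not ungraded-simple), so the sentence ``since $V$ is a simple $U(\g)$-module with commutant $\kc$'' needs a supplement. Under hypothesis (1) the gap closes in a few lines: if $W\subsetneq V$ is a nonzero ungraded submodule and $\pi$ the parity involution of $V$, then $W+\pi(W)$ and $W\cap\pi(W)$ are graded submodules, so $V=W\oplus\pi(W)$; the projection $\theta$ onto $W$ along $\pi(W)$ is a non-scalar ungraded endomorphism, its parity-preserving part $\tfrac12(\theta+\pi\theta\pi)$ lies in $\textup{End}_{\g}(V)_{\boh}=\kc\,\textup{id}$, and composing the parity-reversing part with $\pi$ yields an element of $\textup{End}_{\g}(V)_{\bon}$, which must then be nonzero, contradicting the hypothesis. (Alternatively, invoke the graded density theorem as the paper does, which packages exactly this point.) With that addition, or with the analogous even-subalgebra argument you already use in (2), your proof is complete.
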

\begin{proof}
Both statements follow from the Jacobson Density Theorem and Schur's Lemma (see \cite[Proposition 8.2]{Che95}). Also observe that $u$ is even in (2) since all $v_i$'s and all $w_i$'s have the same parity.
\end{proof}

\begin{lemma}\label{proposition:tensorissimple}
Let $\g_1$ and $\g_2$ be Lie superalgebras, and $V_1$, $V_2$ be simple modules over $\g_1$ and $\g_2$, respectively. If $\textup{End}_{\g_1} \, V_1  \cong \kc$, then $V_1 \otimes V_2$ is a simple $\g_1 \oplus \g_2$-module.
\end{lemma}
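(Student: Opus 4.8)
The plan is to fix an arbitrary nonzero graded submodule $W \subseteq V_1 \otimes V_2$ and show that $W = V_1 \otimes V_2$. The argument is the classical ``decomposable tensor extraction'' one, adapted to the super setting: first use a density argument on the factor $V_1$ to force a nonzero decomposable tensor into $W$, and then use simplicity of $V_1$ and of $V_2$ to inflate it to all of $V_1 \otimes V_2$.

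First I would note that the hypothesis $\textup{End}_{\g_1}(V_1) \cong \kc$ puts us in case (1) of Schur's Lemma~\ref{lemma:lemmadeschur}; in particular $\textup{End}_{\g_1}(V_1)_{\bon} = 0$, so part (1) of Lemma~\ref{lemma:densitytheoremversion2} is available for $V_1$. Choosing a homogeneous $0 \neq w \in W$ (possible since $W$ is graded), write $w = \sum_{i=1}^n u_i \otimes x_i$ with the $u_i \in V_1$ homogeneous and linearly independent and the $x_i \in V_2$ homogeneous and nonzero. By Lemma~\ref{lemma:densitytheoremversion2}(1) there is $u \in U(\g_1)$ with $u u_1 = u_1$ and $u u_j = 0$ for $2 \leq j \leq n$. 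Since $\g_1 \subseteq \g_1 \oplus \g_2$ acts on $V_1 \otimes V_2$ through the first factor only (with no Koszul sign entering), we get $u \cdot w = u_1 \otimes x_1 \in W$: a nonzero decomposable tensor.

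The rest is inflation by simplicity. Because $V_1$ is simple, $U(\g_1) u_1 = V_1$, whence $V_1 \otimes x_1 = (U(\g_1) u_1) \otimes x_1 = U(\g_1)(u_1 \otimes x_1) \subseteq W$. Now for homogeneous $v \in V_1$ and homogeneous $g \in \g_2$ we have $g \cdot (v \otimes x_1) = \pm\, v \otimes (g x_1) \in W$, hence $v \otimes (g x_1) \in W$; extending linearly in $v$ and iterating over a PBW monomial in $U(\g_2)$ (induction on its degree) yields $v \otimes (u' x_1) \in W$ for all $v \in V_1$ and all $u' \in U(\g_2)$. Since $V_2$ is simple, $U(\g_2) x_1 = V_2$, and therefore $V_1 \otimes V_2 \subseteq W$. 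Thus $W = V_1 \otimes V_2$, i.e. $V_1 \otimes V_2$ is a simple $\g_1 \oplus \g_2$-module.

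The only genuinely delicate point is the invocation of the density theorem in the infinite-dimensional super category; this is exactly what the hypothesis $\textup{End}_{\g_1}(V_1) \cong \kc$ is for, as it rules out case (2) of Schur's Lemma and so guarantees that Lemma~\ref{lemma:densitytheoremversion2}(1) (rather than its weaker even-part variant) applies to $V_1$. Everything else — keeping track of Koszul signs, reducing to a homogeneous $w$ and to homogeneous components of $v$ — is routine and does not affect membership in $W$.
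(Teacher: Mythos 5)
Your proof is correct and takes essentially the same route as the paper's: invoke Lemma~\ref{lemma:densitytheoremversion2}(1) (available because $\textup{End}_{\g_1}(V_1)\cong\kc$ rules out case (2) of Schur's Lemma) to extract a nonzero decomposable tensor from an arbitrary homogeneous element, then use simplicity of $V_1$ and $V_2$ to generate all of $V_1\otimes V_2$. The only cosmetic difference is that the paper applies a single element $u_2\in U(\g_2)$ (tracking the Koszul sign) where you iterate over degree-one elements of $\g_2$ along a PBW monomial.
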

\begin{proof}
We need to show that for any nonzero $v \in V_1 \otimes V_2$ we have that $U(\g_1 \oplus \g_2) v = V_1 \otimes V_2$. Let $v \in V_1 \otimes V_2$ be an arbitrary homogeneous nonzero element of $V_1 \otimes V_2$. Then we can write $v= \sum_{j=1}^r v^1_j \otimes v_j^2.$ 
  Suppose, without loss of generality, that $\{v_j^1\}_{j=1}^r$ is a $\kc$-linearly independent set and $v_1^2  \neq 0$. By Lemma \ref{lemma:densitytheoremversion2} (1), there is $u \in U(\g_1)$ such that $uv_j^1 =\delta_{j1} v_1^1 $. Thus, $u \in U(\g_1 \oplus \g_2 ) $ and 
  $$ u v = \sum_{j=1}^r (uv_j^1) \otimes v_j^2 = v_1^1 \otimes v_1^2 \neq 0.$$

  Let $w_1 \in V_1$ and $w_2 \in V_2$ be arbitrary elements. By the irreducibly of $V_1$ and $V_2$ as modules over $\g_1$ and $\g_2$, respectively, there exists $u_1 \in U(\g_1)$ and $u_2 \in U(\g_2)$ such that $u_1 v_1^1 = w_1$ and $u_2 v_1^2 = w_2$. Thus,
	\begin{align*}
			u_1 ((u_2 (u v))) & = u_1 (u_2(v_1^1 \otimes v_1^2 )) \\
			&= u_1((-1)^{|u_2| | v_1^1|}v_1^1 \otimes( u_2 v_1^2)) \\
			&= (-1)^{|u_2| | v_1^1|} u_1(v_1^1 \otimes w_2) \\
			& =(-1)^{|u_2| | v_1^1|}(u_1 v_1^1 )\otimes w_2 = (-1)^{|u_2| | v_1^1|}  w_1 \otimes w_2.
	\end{align*}
  Therefore, $w_1 \otimes w_2 \in U(\g_1 \oplus \g_2 ) v$ for all $w_1 \otimes w_2 \in V_1 \otimes V_2$, which implies that $V_1 \otimes V_2 = U(\g_1 \oplus \g_2) v$.
  \end{proof}


\begin{proposition}\label{proposition:chengresultfortensorproduct}
Let $\g_1$ and $\g_2$ be Lie superalgebras, and $V_1$, $V_2$ be simple modules over $\g_1$ and $\g_2$, respectively. Then $V_1 \otimes V_2$ is either a simple $\g_1 \oplus \g_2$-module, or it is isomorphic to $V \oplus V$, where $V$ is a simple $\g_1 \oplus \g_2$-module.
\end{proposition}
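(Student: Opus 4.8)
The plan is to reduce to Lemma~\ref{proposition:tensorissimple} by splitting into the two cases provided by Schur's Lemma (Lemma~\ref{lemma:lemmadeschur}) applied to $V_1$ as a $\g_1$-module. If $\textup{End}_{\g_1}(V_1)_{\bon}=0$, then $\textup{End}_{\g_1}(V_1)\cong\kc$ and Lemma~\ref{proposition:tensorissimple} gives that $V_1\otimes V_2$ is simple, so we are done. Hence the substance of the argument is the remaining case, in which $\textup{End}_{\g_1}(V_1)=\kc\,\mathrm{id}\oplus\kc\sigma$ with $\sigma$ odd, $\sigma^2=\mathrm{id}$, and $\sigma$ permuting $(V_1)_{\boh}$ and $(V_1)_{\bon}$; by symmetry the same trichotomy applies to $V_2$, and if $V_2$ also falls in case (1) we can swap the roles of the factors. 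So assume both $V_1$ and $V_2$ are of the second type, with the corresponding odd involutions $\sigma_1\in\textup{End}_{\g_1}(V_1)$ and $\sigma_2\in\textup{End}_{\g_2}(V_2)$.

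First I would produce an explicit $\g_1\oplus\g_2$-endomorphism of $V_1\otimes V_2$ witnessing the decomposition. The natural candidate is $\tau:=\sigma_1\otimes\sigma_2$ (interpreted with the Koszul sign convention on the tensor product of odd operators), which is an \emph{even} operator commuting with the action of $\g_1\oplus\g_2$, and satisfies $\tau^2=\sigma_1^2\otimes\sigma_2^2=\mathrm{id}$ up to a sign coming from the super-interchange of $\sigma_1$ and $\sigma_2$; after fixing normalizations one arranges $\tau^2=\pm\mathrm{id}$, and rescaling $\tau$ by a square root of $\mp 1$ if necessary we may assume $\tau^2=\mathrm{id}$. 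Then $V_1\otimes V_2=W_+\oplus W_-$ where $W_\pm=\ker(\tau\mp\mathrm{id})$ are $\g_1\oplus\g_2$-submodules.

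Next I would show $W_+$ is simple and $W_+\cong W_-$ as $\g_1\oplus\g_2$-modules, which finishes the proof with $V:=W_+$. For simplicity of $W_+$: take a nonzero homogeneous $v\in W_+$, write $v=\sum_{j=1}^r v_j^1\otimes v_j^2$ with $\{v_j^1\}$ linearly independent and the $v_j^i$ homogeneous; using Lemma~\ref{lemma:densitytheoremversion2}(2) on each factor (the even-density statement, which is exactly what is available here since $\textup{End}(V_i)_{\bon}\neq 0$) one can act by an element of $U(\g_1)_{\boh}\otimes U(\g_2)_{\boh}$ to isolate a single pure tensor $v_1^1\otimes v_1^2$, and then reach every pure tensor $w_1\otimes w_2$ with $w_i$ in the \emph{same} parity class as $v_i^1$. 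The even part of $U(\g_1\oplus\g_2)$ generated this way, together with the fact that $\tau$-eigenspace $W_+$ is preserved, shows that the $U(\g_1\oplus\g_2)$-submodule generated by $v$ is all of $W_+$: indeed $V_1\otimes V_2$ is spanned by pure tensors of the two "diagonal" parity types, $\tau$ exchanges these two types (up to scalar), and $W_+$ is the graph of this exchange, so hitting one parity class of pure tensors inside $W_+$ automatically produces the other via the already-present action. For $W_+\cong W_-$: the odd operator $\sigma_1\otimes\mathrm{id}$ (equivalently $\mathrm{id}\otimes\sigma_2$) commutes with $\g_1\oplus\g_2$ and anticommutes with $\tau$ up to sign, hence maps $W_+$ isomorphically onto $W_-$.

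The main obstacle I anticipate is \emph{bookkeeping the Koszul signs} in case (2): getting $\tau=\sigma_1\otimes\sigma_2$ to square to exactly $\mathrm{id}$ (not $-\mathrm{id}$) and verifying that it is genuinely even and $\g_1\oplus\g_2$-linear requires care, as does the claim that acting by $U(\g_1\oplus\g_2)_{\boh}$ on a single pure tensor inside $W_+$ suffices to generate all of $W_+$ rather than only the "same-parity-type" half. One must use that the two diagonal parity types are interchanged by the central even operator $\tau$ (which is built into the submodule $W_+$ itself), so no genuinely odd element of $U(\g_1\oplus\g_2)$ is needed; alternatively, one invokes Lemma~\ref{lemma:densitytheoremversion2}(1) applied to the simple $(\g_1\oplus\g_2)$-module $V_1\otimes V_2$ only after first establishing $\textup{End}_{\g_1\oplus\g_2}(V_1\otimes V_2)$ is $2$-dimensional, which is a minor circularity to avoid. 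I would resolve this by working directly with the eigenspace decomposition under $\tau$ and the density statements for the two factors separately, never needing density for the tensor product itself.
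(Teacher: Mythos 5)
Your proposal is correct and follows essentially the same route as the paper: reduce via Schur's Lemma to the case where both $\textup{End}_{\g_i}(V_i)$ have nonzero odd part, form the even involution $\sigma_1\otimes\sigma_2$ (the paper handles the Koszul sign by normalizing $\sigma_1^2=\mathrm{id}$, $\sigma_2^2=-\mathrm{id}$, which is equivalent to your rescaling), split $V_1\otimes V_2$ into its two eigenspaces, exhibit the isomorphism between them via $\sigma_1\otimes 1$, and prove simplicity of an eigenspace by applying the density lemma factorwise to isolate and then propagate a generator. No substantive difference.
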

\begin{proof}

    If $\textup{End}_{\g_1}(V_1) \cong \kc$ or $\textup{End}_{\g_2}(V_2) \cong \kc$, then by Proposition \ref{proposition:tensorissimple} $V_1 \otimes V_2$ is a simple $\g_1 \oplus \g_2$-module. By Lemma \ref{lemma:lemmadeschur} we can assume that $\textup{End}_{\g_1}(V_1) \cong \kc id \oplus \kc \sigma_1$ and $\textup{End}_{\g_1}(V_1) \cong \kc id \oplus \kc \sigma_2$, where $\sigma_1$ and $\sigma_2$ are odd elements such that $\sigma_1^2=id$ and $\sigma_2^2=-id$.
    
    The map $\sigma(v_1 \otimes v_2) =  (-1)^{|v_1|} \sigma_1(v_1) \otimes \sigma_2(v_2)$ is a $\g_1 \oplus \g_2$-module endomorphism, and $\sigma^2 =id$.

    Note that for every $x \in V_1 \otimes V_2$ 
    $$x = \left (\frac{x - \sigma(x)}{2} \right) + \left( \frac{x+\sigma(x)}{2} \right),$$
    thus $V_1 \otimes V_2 = V \oplus V'$ with
    \begin{align*}
        V = \{x \in V_1 \otimes V_2 \mid \sigma(x)=x \} \quad \text{and} \quad
        V' = \{x \in V_1 \otimes V_2 \mid \sigma(x)=-x \}.
    \end{align*}
     Note that $V$ and $V'$ are $\g_1 \oplus \g_2$-submodules of $V_1 \otimes V_2$. Let $\{w_i \in V_2 \mid i \in I\}$ be a basis of the even part $V_2$, then $\{w_i, \ \sigma_2(w_i) \mid i \in I \}$ is a basis of $V_2$. It is possible to show that $V$ is generated by 
    $\{v \otimes w_j + \sigma(v \otimes w_j) \mid v \in V_1, \ j \in I\}$, and $V'$ is generated by $\{v \otimes w_j - \sigma(v \otimes w_j) \mid v \in V_1, \ j \in I\}$.

The automorphism of $\g_1 \oplus \g_2$-modules
$\sigma_1 \otimes 1 :V_1 \otimes V_2 \rightarrow V_1 \otimes V_2$
sends $V$ to $V'$ and $V'$ to $V$, thus $V \cong V'$.

    Let $v \in V$ be a homogeneous element of $V$, then there exists $v_1,\dots,v_n$ such that 
    \[
    v = \sum_{j=1}^n v_j \otimes w_{i_j} + \sigma(v_j \otimes w_{i_j})
    \]
    with $v_1 \neq 0$ and $i_k \neq i_l$ if $k \neq l$. By Lemma \ref{lemma:densitytheoremversion2}, there exist $u \in U(\g_2)$ such that $uw_{i_j} = \delta_{1j} w_{i_1}$. Hence, $uv = v_1 \otimes w_{i_1}+ \sigma (v_1 \otimes w_{i_1})$. If $v_0 \in V_1$ is homogeneous and $k\in I$, then there exists $a\in U(\g_1)$ and $b\in U(\g_2)$ such that $a v_1 = v_0$ and $bw_{j_1} = b w_k$ because both $V_1$ and $V_2$ are simple. Thus,
    \[
     a(b(uv)) = v_0 \otimes w_k + \sigma(v_0 \otimes w_k),
    \]
    and the generating set $\{v_1 \otimes w_j + \sigma(v_1\otimes w_j) \mid v_1 \in V_1, \ j \in I \}$ of $V$ is a subset of $U(\g_1 \otimes \g_2)v$. We conclude that $V$ is simple because $V=U(\g_1 \otimes \g_2)v$ for every nonzero homogeneous element $v\in V$.
    
\end{proof}

\begin{definition}
Using the notation of Proposition \ref{proposition:chengresultfortensorproduct}, we define the \emph{irreducible tensor product} of $V_1$ and $V_2$ as 
    $$
    V_1 \hat{\otimes} V_2 =
    \begin{cases}
        V_1 \otimes V_2, \quad & \mbox{if} \ V_1 \otimes V_2 \text{ is simple}, \\
        V, \quad & \mbox{if} \ V_1 \otimes V_2 \text{ is not simple,}
    \end{cases}
    $$
    where $V$ is the (unique) simple submodule of $V_1\otimes V_2$, obtained in the proof of Proposition \ref{proposition:chengresultfortensorproduct}, for which we have an isomorphism of $(\g_1\oplus \g_2)$-modules  $V_1 \otimes V_2\cong V\oplus V$.
\end{definition}

\begin{lemma}\label{lemma:simpleofsumistensor}
    Let $\g$ be a basic classical Lie superalgebra and $V$ be a finite weight $\g$-module. If there is $\lambda \in \h^*$ such that $W^{\lambda} = \{w \in W \mid h w=\lambda(h)w \ \text{for all} \ h \in \h \}$ is nonzero for all submodules $W \subset V$, then $V$ contains a simple $\g$-module. 
\end{lemma}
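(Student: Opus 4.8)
### Proof proposal for Lemma \ref{lemma:simpleofsumistensor}

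The plan is to use a standard Zorn's lemma / minimality argument on the nonzero weight space $V^{\lambda}$, combined with the fact that over a basic classical Lie superalgebra finite weight modules have a good enough structure theory that a "minimal" submodule supported at $\lambda$ must itself be simple. First I would fix $\lambda$ as in the hypothesis and consider the collection $\mathcal{S}$ of all nonzero submodules $W \subseteq V$ with $W^{\lambda} \neq 0$; by hypothesis this set contains $V$, so it is nonempty. I want to produce a minimal element of $\mathcal{S}$ (with respect to inclusion) and show it is simple.

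The key point is that membership in $\mathcal{S}$ can be detected by a \emph{finite-dimensional} invariant, which makes Zorn's lemma applicable. Concretely, for $W \in \mathcal{S}$ the subspace $W^{\lambda}$ is finite-dimensional (since $V$ is a finite weight module), so among all $W \in \mathcal{S}$ I may choose one, say $W_0$, for which $\dim W_0^{\lambda}$ is minimal. Now I claim $W_0$ is simple: if $0 \neq W_1 \subsetneq W_0$ is a proper nonzero submodule, then by the hypothesis applied to $V$ — more precisely, applying the hypothesis structure to the submodule $W_0$, noting that the hypothesis says \emph{every} nonzero submodule of $V$ meets $V^{\lambda}$, and $W_1$ is in particular a submodule of $V$ — we get $W_1^{\lambda} \neq 0$, so $W_1 \in \mathcal{S}$. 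But then $\dim W_1^{\lambda} < \dim W_0^{\lambda}$ is impossible unless $W_1^{\lambda} = W_0^{\lambda}$; so every proper nonzero submodule $W_1$ of $W_0$ satisfies $W_1^{\lambda} = W_0^{\lambda}$. To finish I would take the sum $N$ of all proper submodules of $W_0$; if $N \subsetneq W_0$ then $N$ is the unique maximal submodule and $W_0/N$ is simple, but I actually want $W_0$ itself simple, so instead I argue: let $N$ be the sum of all submodules $W_1$ with $W_1^{\lambda} \subsetneq W_0^{\lambda}$; by the previous sentence there are no such nonzero $W_1$, hence the only submodule with strictly smaller $\lambda$-weight space is $0$, and combined with "every nonzero submodule meets $\lambda$", we conclude $W_0$ has no nonzero proper submodule at all, i.e. $W_0$ is simple.

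Let me restate the decisive step cleanly, since the above has a gap: from "every nonzero submodule $W_1 \subseteq W_0$ has $W_1^{\lambda} = W_0^{\lambda}$" I cannot \emph{immediately} conclude simplicity, because two distinct submodules could share the same $\lambda$-weight space. The fix is to intersect: if $W_1, W_2$ are two nonzero submodules of $W_0$, then $W_1 \cap W_2$ is a submodule of $V$, hence meets $V^{\lambda}$ if nonzero; if $W_1 \cap W_2 = 0$ then $W_1^{\lambda} \oplus W_2^{\lambda} \subseteq W_0^{\lambda}$, forcing $W_1^{\lambda} = W_2^{\lambda} = 0$, a contradiction. So any two nonzero submodules of $W_0$ intersect nontrivially. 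Thus the intersection of \emph{all} nonzero submodules of $W_0$ — equivalently, the intersection over a chain realizing the minimum, which by finite-dimensionality of the $\lambda$-weight space stabilizes — is again a nonzero submodule $W_{\min}$, and it is the unique minimal nonzero submodule, hence simple. This $W_{\min}$ is the desired simple $\g$-module contained in $V$.

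The main obstacle I anticipate is precisely the subtlety just addressed: one must not conflate "minimal $\lambda$-weight space dimension" with "minimal submodule," and the argument genuinely needs the observation that nonzero submodules of $W_0$ cannot have disjoint supports at $\lambda$ (which is where the hypothesis is used in full strength). Once that is in place, finite-dimensionality of $V^{\lambda}$ — guaranteed by $V$ being a finite weight module and $\g$ basic classical — does the rest via a descending chain / minimality argument. I should also double-check that I never need the finite weight hypothesis beyond finite-dimensionality of the single weight space $V^{\lambda}$, which is all that is used.
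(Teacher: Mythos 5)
Your overall strategy---pick a nonzero submodule $W_0$ whose $\lambda$-weight space has minimal dimension and use the hypothesis to manufacture a minimal submodule---is the same as the paper's, and your key intermediate observation is correct: every nonzero submodule $W_1\subseteq W_0$ satisfies $W_1^{\lambda}=W_0^{\lambda}$. But the final deduction as you state it has a genuine gap. From ``any two nonzero submodules of $W_0$ intersect nontrivially'' you cannot conclude that the intersection of \emph{all} nonzero submodules of $W_0$ is nonzero: pairwise nontrivial intersections never force a nonzero global intersection (a strictly descending chain with zero intersection has pairwise nonzero intersections). Nor does the parenthetical about ``the intersection over a chain realizing the minimum, which by finite-dimensionality of the $\lambda$-weight space stabilizes'' help: by your own step the $\lambda$-weight spaces of all nonzero submodules of $W_0$ are already equal to $W_0^{\lambda}$, so their dimensions are constant along any chain and give no control over whether a descending chain of submodules stabilizes. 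As written, the existence of your $W_{\min}$ is not established.

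The repair is one line away and uses only what you already proved: since every nonzero submodule $W_1\subseteq W_0$ has $W_1^{\lambda}=W_0^{\lambda}$, it \emph{contains} the fixed nonzero subspace $W_0^{\lambda}$. Hence $M:=U(\g)W_0^{\lambda}$ is a nonzero submodule contained in every nonzero submodule of $W_0$; equivalently, for any nonzero $m\in M$ the submodule $U(\g)m$ satisfies $(U(\g)m)^{\lambda}=W_0^{\lambda}$, so $W_0^{\lambda}\subseteq U(\g)m$ and therefore $U(\g)m=M$. Thus $M$ is simple, and this is exactly the paper's proof. The pairwise-intersection detour and the appeal to Zorn's lemma become unnecessary once this is observed; the only inputs are the hypothesis on $\lambda$ and the finite-dimensionality of $V^{\lambda}$, as you correctly anticipated.
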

\begin{proof}
The proof for the Lie algebra case works in our setting \cite[Lemma~3.3]{BLL15}.

\end{proof}

\begin{theorem}\label{proposition:simpleofsumistensor}
Let $\g_1$ and $\g_2$ be basic classical Lie superalgebras, and $S_1,S_2$ be commutative associative unital algebras. If $V$ is a simple finite weight $\left(\g_1 \otimes S_1 \oplus \g_2 \otimes S_2\right)$-module, then $V \cong V_1 \hat{\otimes} V_2$ where $V_1$ and $V_2$ are simple finite weight modules over $\g_1 \otimes S_1$ and $\g_2 \otimes S_2$, respectively. Moreover, if $\textup{End}_{\g_i \otimes S_i} (V_i) \cong \kc$ for some $i=1,2$, then $V \cong V_1 \otimes V_2$.
\end{theorem}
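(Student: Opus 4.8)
Write $\mathfrak a_i:=\g_i\otimes S_i$, $\mathfrak a:=\mathfrak a_1\oplus\mathfrak a_2$, and let $\h=\h_1\oplus\h_2$ be the Cartan of $\mathfrak a$. Fix $\lambda=(\lambda_1,\lambda_2)\in\Supp V$. The plan is to first extract a simple submodule over one factor, and then run the classical double--centralizer argument, keeping track of the two cases in Schur's Lemma~\ref{lemma:lemmadeschur}.

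\emph{Step 1: producing a simple $\mathfrak a_2$-submodule of $V$.} Since $[\mathfrak a_1,\mathfrak a_2]=0$ and $\h_1\subseteq\mathfrak a_1$, the $\h_1$-weight space $V_{\lambda_1}:=\{v\in V\mid hv=\lambda_1(h)v\ \text{for all}\ h\in\h_1\}$ is an $\mathfrak a_2$-submodule of $V|_{\mathfrak a_2}$, and it is a \emph{finite} weight $\mathfrak a_2$-module because its $\h_2$-weight spaces are weight spaces of $V$. I claim that every nonzero $\mathfrak a_2$-submodule $W\subseteq V_{\lambda_1}$ satisfies $W^{\lambda_2}\neq 0$. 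Indeed, simplicity of $V$ gives $U(\mathfrak a)W=V$; writing $U(\mathfrak a)=U(\mathfrak a_1)U(\mathfrak a_2)$ and using that $W$ is $\mathfrak a_2$-stable yields $V=U(\mathfrak a_1)W$, and intersecting with $V_{\lambda_1}$ (only the weight-zero part of $U(\mathfrak a_1)$ for the adjoint $\h_1$-action preserves $V_{\lambda_1}$) gives $V_{\lambda_1}=U(\mathfrak a_1)^{(0)}W$, where $U(\mathfrak a_1)^{(0)}$ denotes that weight-zero part. As $\mathfrak a_1$ commutes with $\h_2$, the operators in $U(\mathfrak a_1)^{(0)}$ preserve $\h_2$-weights, so $V^{\lambda}=(V_{\lambda_1})^{\lambda_2}=U(\mathfrak a_1)^{(0)}(W^{\lambda_2})$; since $V^{\lambda}\neq 0$ we must have $W^{\lambda_2}\neq 0$. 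Therefore Lemma~\ref{lemma:simpleofsumistensor} applies to the finite weight $\mathfrak a_2$-module $V_{\lambda_1}$ with the weight $\lambda_2$, producing a simple $\mathfrak a_2$-submodule $V_2\subseteq V_{\lambda_1}\subseteq V$ with $V_2^{\lambda_2}\neq 0$; it is a finite weight $\mathfrak a_2$-module.

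\emph{Step 2: the double centralizer.} By simplicity, $V=U(\mathfrak a_1)V_2$ (this is a nonzero $\mathfrak a$-submodule, using $[\mathfrak a_1,\mathfrak a_2]=0$). For $u\in U(\mathfrak a_1)$, the subspace $uV_2$ is an $\mathfrak a_2$-submodule and a homomorphic image of $V_2$, hence $0$ or $\cong V_2$; thus $V|_{\mathfrak a_2}$ is a direct sum of copies of $V_2$. Let $D_2:=\textup{End}_{\mathfrak a_2}(V_2)$, which by Lemma~\ref{lemma:lemmadeschur} is either $\kc$ or the rank-one Clifford superalgebra $\kc\,\textup{id}\oplus\kc\sigma$. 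The evaluation map then gives an isomorphism of $\mathfrak a$-modules
\[
V\;\cong\;V_1\otimes_{D_2}V_2,\qquad V_1:=\Hom_{\mathfrak a_2}(V_2,V),
\]
with $\mathfrak a_1$ acting on $V_1$ by post-composition and $D_2$ acting by pre-composition (these actions commute, so $D_2$ acts by $\mathfrak a_1$-endomorphisms). Here $V_1$ is a finite weight $\mathfrak a_1$-module, since for a nonzero $\h_1$-weight $\mu_1$ of $V_1$ one has $V_1^{\mu_1}\otimes_{D_2}V_2^{\lambda_2}\hookrightarrow V^{(\mu_1,\lambda_2)}$, which is finite-dimensional while $V_2^{\lambda_2}\neq 0$. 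Moreover $V_1$ is simple over $\mathfrak a_1$: given a nonzero $\mathfrak a_1$-submodule $V_1'\subseteq V_1$, replace it by $V_1'D_2$ (still $\mathfrak a_1$-stable); then $\{f(w)\mid f\in V_1'D_2,\ w\in V_2\}$ spans a nonzero $\mathfrak a$-submodule of $V$, hence all of $V$, and comparing with $V\cong V_1\otimes_{D_2}V_2$ forces $V_1'D_2=V_1$, whence $V_1'=V_1$.

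\emph{Step 3: identifying $\hat\otimes$, and the moreover clause.} If $D_2=\kc$, then $V\cong V_1\otimes_{\kc}V_2$ with $V_1,V_2$ simple, and this tensor product is simple, so $V\cong V_1\hat\otimes V_2$. If $D_2$ is the Clifford superalgebra, the natural surjection $V_1\otimes_{\kc}V_2\twoheadrightarrow V_1\otimes_{D_2}V_2\cong V$ has a nonzero proper $\mathfrak a$-submodule as kernel; by Proposition~\ref{proposition:chengresultfortensorproduct}, $V_1\otimes_{\kc}V_2\cong V'\oplus V'$ for a simple $\mathfrak a$-module $V'$, and since $V$ is a quotient we get $V'\cong V$, i.e.\ $V_1\otimes_{\kc}V_2\cong V\oplus V$ and so $V\cong V_1\hat\otimes V_2$ by definition. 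In all cases $V\cong V_1\hat\otimes V_2$ with $V_1,V_2$ simple finite weight modules. Finally, if $\textup{End}_{\mathfrak a_i}(V_i)\cong\kc$ for some $i$, then $V_1\otimes_{\kc}V_2$ is simple by Lemma~\ref{proposition:tensorissimple}, hence $V_1\hat\otimes V_2=V_1\otimes_{\kc}V_2\cong V$, proving the last assertion.

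I expect Step~1 to be the main obstacle: in the infinite-dimensional setting the restriction $V|_{\mathfrak a_2}$ need not be a finite weight module and a priori need not even contain a simple submodule, so one cannot argue directly. Passing to the single $\h_1$-weight space $V_{\lambda_1}$ restores the finite-weight condition over $\mathfrak a_2$, and the key point is that simplicity of $V$ over the whole algebra $\mathfrak a$ is exactly what forces the hypothesis of Lemma~\ref{lemma:simpleofsumistensor} to hold for $\lambda_2$. Everything afterwards is the standard isotypic-decomposition argument; the only genuinely super feature is the case where $V_2$ is of ``type $Q$'' (i.e.\ $D_2\neq\kc$), which is absorbed by Proposition~\ref{proposition:chengresultfortensorproduct}.
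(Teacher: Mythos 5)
Your overall strategy is the same as the paper's: extract a simple $\mathfrak a_2$-submodule $V_2$ with nonzero $\lambda_2$-weight space via Lemma~\ref{lemma:simpleofsumistensor}, pass to the Hom-space $\Hom_{\mathfrak a_2}(V_2,V)$ with its $\mathfrak a_1$-action, and finish with the evaluation map and Proposition~\ref{proposition:chengresultfortensorproduct}. Your Step~1 is a clean (and slightly more self-contained) variant of the paper's argument, and the ``moreover'' clause is handled correctly.

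There is, however, a genuine gap in Step~2: the claim that $V_1:=\Hom_{\mathfrak a_2}(V_2,V)$ is a \emph{simple} $\mathfrak a_1$-module is false in general. The precise failure is the last implication ``$V_1'D_2=V_1$, whence $V_1'=V_1$'': a proper $\mathfrak a_1$-submodule $V_1'$ can generate all of $V_1$ over $D_2$, since $V_1=V_1'+V_1'\sigma$ with $V_1'\sigma\cong\Pi V_1'$ does not force $V_1'=V_1$. Concretely, take $V_2^0$ of type $Q$ and $V_1^0$ simple with $\textup{End}_{\mathfrak a_1}(V_1^0)\cong\kc$; then $V:=V_1^0\otimes V_2^0$ is simple by Lemma~\ref{proposition:tensorissimple}, yet $\Hom_{\mathfrak a_2}(V_2^0,V)\cong V_1^0\otimes D_2\cong V_1^0\oplus\Pi V_1^0$ is not simple over $\mathfrak a_1$. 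This also breaks Step~3, where you apply Proposition~\ref{proposition:chengresultfortensorproduct} to $V_1\otimes_{\kc}V_2$ --- that proposition requires both factors to be simple, and moreover $\hat\otimes$ is only defined for simple factors. The repair is exactly what the paper does: instead of using all of $\Hom_{\mathfrak a_2}(V_2,V)$, note that it is a finite weight $\mathfrak a_1$-module every nonzero submodule of which has nonzero $\lambda_1$-weight space (your surjectivity argument restricted to the $(\lambda_1,\lambda_2)$-weight space shows this), so Lemma~\ref{lemma:simpleofsumistensor} yields a \emph{simple} submodule $P$ with $P^{\lambda_1}\neq0$; the evaluation map $P\otimes V_2\to V$ is then a surjection from a module that, by Proposition~\ref{proposition:chengresultfortensorproduct}, is either simple or of the form $K\oplus K$, and in either case $V\cong P\hat\otimes V_2$.
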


\begin{proof}
The proof is similar to that of \cite[Proposition~3.4]{BLL15} with some slight modifications. Let $v \in V^{(\lambda,\mu)}$ be a nonzero vector of weight $(\lambda,\mu) \in \h_1^* \times \h_2^*$. We have that 
$$h_1 u v = (-1)^{|u| |h_1| } u h_1 v + [h_1,u]v = \lambda(h_1)u v \text{ for all } u \in U(\g_2 \otimes S_2), \ h_1 \in \h_1^*. $$ 
Then $W= U(\g_2 \otimes S_2)v \subset V$ is a finite weight module for $\g_2 \otimes S_2$, because $W^{\eta} \subset V^{(\lambda,\eta)}$.

Let $N$ be any nonzero $(\g_2 \otimes S_2)$-submodule of $W$. Then define $H^N$ to be the subspace of $\Hom_{\kc}(N,V)$ generated by all homogeneous elements $\varphi\in \Hom_{\kc}(N,V)$ such that $y\varphi(w)=(-1)^{|\varphi||y|}\varphi(yw)$ for all $ y \in \g_2 \otimes S_2$, $w \in N$. Notice that $H^N$ is a nonzero vector space and it is a module over $\g_1 \otimes S_1$, with action $(x \varphi)(w)=x(\varphi(w))$, for all $x \in \g_1 \otimes S_1$, $\varphi \in H^N$, and $w \in N$. 

    Let $M \subset H^N$ be a nonzero $(\g_1 \otimes S_1)$-submodule. The map 
    \begin{align*}
          \Psi_{M,N} : M \otimes N & \rightarrow V \\
          \varphi \otimes w & \mapsto \varphi(w)
    \end{align*}
    is a nonzero $(\g_1 \otimes S_1) \oplus (\g_2 \otimes S_1)$-module homomorphism. By the simplicity of $V$, $\Psi_{M,N}$ is surjective. Note that $(M \otimes N)^{(\alpha,\beta)} = M^{\alpha} \otimes N^{\beta}$, therefore $\Psi_{M,N}$ restricts to a surjection $M^{\lambda} \otimes N^{\mu} \rightarrow V^{(\lambda,\mu)}$. Thus, $M^{\lambda}$ and $N^{\mu}$ are nonzero subspaces, since $V^{(\lambda,\mu)} \neq 0$.

    Since $W$ is a finite weight $(\g_2 \otimes S_2)$-module, it has a simple $(\g_2 \otimes S_2)$-submodule $Q \subset W$ such that $Q^{\mu} \neq 0$, by Lemma \ref{lemma:simpleofsumistensor}. Let $w \in Q^{\mu}$ be a nonzero weight vector. Then $Q = U(\g_2 \otimes S_2) w$ by its simplicity. If $\varphi \in H^Q$ is homogeneous, then $\varphi(uw)=(-1)^{|u||\varphi|}u\varphi(w)$ for all $u \in U(\g_2 \otimes S_2)$. Thus every element of $H^Q$ is completely defined by its value on $w$. Using this and the fact that every weight space of $V$ has finite dimension, one can show that $H^Q$ is a finite weight $(\g_1 \otimes S_1)$-module. Since for every submodule $L \subset H^Q $ we have that $L^{\lambda} \neq 0$, then $H^Q$ has a simple finite weight submodule $P$ over $\g_1 \otimes S_1$, by Lemma \ref{lemma:simpleofsumistensor}.  By Proposition \ref{proposition:chengresultfortensorproduct}, the $(\g_1 \otimes S_1) \oplus (\g_2 \otimes S_2)$-module $P \otimes Q$ is either simple (this is the case when $\textup{End}_{\g_1 \otimes S_1} (P) \cong \kc$ or $\textup{End}_{\g_2 \otimes S_2} (Q) \cong \kc$) or it contains a simple submodule $K$ for which $P \otimes Q \cong K \oplus K$. If $P \otimes Q$ is simple, then $\Psi_{P,Q}$ is an isomorphism. If $P \otimes Q \cong K \oplus K$, then every nonzero proper submodule (resp.  quotient) of $P\otimes Q$ is isomorphic to $K$. In particular, $\Psi_{P,Q}$ induces an isomorphism between $K$ and $V$, and hence $V \cong P \hat{\otimes} Q$.
\end{proof}

\begin{remark}
We point out that both Lemma~\ref{lemma:simpleofsumistensor} and Theorem~\ref{proposition:simpleofsumistensor} still hold for a more general class of Lie superalgebras. Namely, the class of all Lie superalgebras $\g$ that admit an abelian subalgebra $\h\subseteq \g$ acting (via the adjoint action) semisimply on $\g$. In this case, we would consider the class of all $\g$-modules on which $\h$ acts semisimply. In particular, this is the case for any abelian Lie algebra.
\end{remark}


\section{Parabolic induction theorem}
\label{section:paraboolicinductiontheorem}
In this section we prove a parabolic induction theorem, which reduces the classification of simple finite-weight modules over $\cG$ to the classification of the so-called cuspidal modules over cuspidal Lie subalgebras of $\cG$.

\begin{lemma}\label{lemma:actseverywhere02}
    Let $V$ be a simple finite weight $\mathcal{G}$-module. Let $\alpha \in \Delta$ and  $a \in A$ a non zero element. Then either $x_{\alpha} \otimes a $ acts locally nilpotently everywhere on $V$ or it acts injectively everywhere on $V$. 
\end{lemma}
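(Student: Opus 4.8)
The plan is to reduce to the case of an even root and then to exploit that $\operatorname{ad}(x_\alpha\otimes a)$ is locally nilpotent on $U(\mathcal{G})$. For the reduction, suppose $\alpha\in\Delta_{\bon}$. If $(x_\alpha\otimes a)^2=\tfrac12[x_\alpha,x_\alpha]\otimes a^2$ vanishes in $U(\mathcal{G})$ (which happens, for instance, when $\alpha$ is isotropic or when $a^2=0$), then $x_\alpha\otimes a$ is annihilated by its own square, hence acts locally nilpotently everywhere, and we are in the second alternative. Otherwise $z:=(x_\alpha\otimes a)^2$ is a nonzero even weight vector of weight $2\alpha$; since $\g^{2\alpha}\ne 0$ it follows that $2\alpha\in\Delta_{\boh}$, and since $\dim\g^{2\alpha}=1$ we may write $z=x_{2\alpha}\otimes b$ for some nonzero $b\in A$. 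Because $x_\alpha\otimes a$ has nonzero weight, on each weight space of $V$ it is injective exactly when its square $z$ is, and it is locally nilpotent exactly when $z$ is; so the dichotomy for $x_\alpha\otimes a$ follows from the dichotomy for the pair $(2\alpha,b)$. Hence from now on I assume $\alpha\in\Delta_{\boh}$ and set $x:=x_\alpha\otimes a\in\mathcal{G}_{\boh}$.

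Next I would record that $\operatorname{ad}(x)$ is locally nilpotent on $U(\mathcal{G})$. Since $\Delta$ is finite and $x_\alpha\in\g^\alpha$, the map $\operatorname{ad}(x_\alpha)^k$ sends $\g^\beta$ into $\g^{\beta+k\alpha}$, which is zero once $k$ is large, uniformly in $\beta$; hence $\operatorname{ad}(x_\alpha)$ is nilpotent on $\g$, and then $\operatorname{ad}(x)$ is nilpotent on $\mathcal{G}=\g\otimes A$ because $\operatorname{ad}(x)^k(y\otimes c)=\operatorname{ad}(x_\alpha)^k(y)\otimes a^kc$. As $\operatorname{ad}(x)$ is an even derivation of $U(\mathcal{G})$ killing a high power of each element of the generating set $\mathcal{G}$, it is locally nilpotent on all of $U(\mathcal{G})$.

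Finally, suppose $x$ does not act injectively everywhere; I must show it acts locally nilpotently everywhere. Choose $\lambda\in\Supp V$ and a nonzero $v\in V^\lambda$ with $xv=0$, so that $x^jv=0$ for every $j\ge 1$. Let $w\in V$ be arbitrary; by simplicity of $V$ write $w=uv$ with $u\in U(\mathcal{G})$, and pick $N$ with $\operatorname{ad}(x)^N(u)=0$. Since $x$ is even, the identity
\[
x^n u=\sum_{k=0}^{n}\binom{n}{k}\operatorname{ad}(x)^k(u)\,x^{n-k}
\]
holds in $U(\mathcal{G})$, so for $n\ge N$ the surviving terms all have $n-k\ge 1$ and are killed when applied to $v$; hence $x^nw=x^nuv=0$. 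As $w$ was arbitrary, $x$ acts locally nilpotently everywhere, completing the dichotomy. The only step needing genuine care — and the one I would single out as the main obstacle — is the odd-root reduction above, where one uses that $\g$ is basic classical (finiteness of $\Delta$, one-dimensional root spaces) to see that, when $(x_\alpha\otimes a)^2\ne 0$, it lands in an even root space, and that passing to squares preserves both alternatives; everything after that is the routine $\operatorname{ad}$-nilpotency plus binomial-expansion argument.
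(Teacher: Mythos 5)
Your proof is correct and follows essentially the same route as the paper's (sketched) argument: local $\operatorname{ad}$-nilpotency of $x_{\alpha}\otimes a$ on $U(\mathcal{G})$, combined with the simplicity of $V$, yields the dichotomy, which is exactly the standard fact the paper invokes in its one-line proof. Your explicit reduction of odd roots to even ones via squaring $(x_{\alpha}\otimes a)^2=\tfrac12[x_{\alpha},x_{\alpha}]\otimes a^2$ is a legitimate way of supplying a detail the paper leaves implicit.
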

\begin{proof}
    Similarly to the Lie algebra case, this result follows from the fact that the set of vectors of $V$ such that $x_{\alpha} \otimes a$ acts locally nilpotently is a submodule of $V$, along with the fact that every element $x_{\beta}$ with $\beta \in \Delta$ acts nilpotently on $\g$ via the adjoint representation.
\end{proof}

\begin{proposition}\label{proposition:equivalentnilpotent}
    Let $V$ be a simple finite weight $\mathcal{G}$-module. Suppose $\alpha \in \Delta_{\boh}$ or $\alpha \in \Delta_{\bon}$ with $2\alpha \in \Delta$. Then the following conditions are equivalent
    \begin{enumerate}
        \item\label{propitem:equivalentnilpotent01} For each $\lambda \in \Supp V$, $V^{\lambda+n\alpha}$ is zero for all but finite many $n>0$.
        \item\label{propitem:equivalentnilpotent02} There is $\lambda \in \Supp V$ such that $V^{\lambda+n\alpha}$ is zero for all but finite many $n>0$.
        \item\label{propitem:equivalentnilpotent03} For all $a \in A$, $x_{\alpha} \otimes a$ acts nilpotently on $V$.
        \item\label{propitem:equivalentnilpotent04} $x_{\alpha} \otimes 1$ acts nilpotently on $V$.
    \end{enumerate}
\end{proposition}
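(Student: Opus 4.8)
The plan is to prove the cycle $(1)\Rightarrow(2)\Rightarrow(4)\Rightarrow(1)$, together with the easy equivalences $(1)\Rightarrow(3)\Rightarrow(4)$. At the outset, note that when $\alpha$ is odd with $2\alpha\in\Delta$ one may pass to the even root $2\alpha$: by Lemma~\ref{lemma:equivalentrootnilinv}(1), $x_\alpha\otimes1$ is nilpotent on $V$ iff $x_{2\alpha}\otimes1$ is, and the $\alpha$-strings and the $2\alpha$-strings lie along the same lines; so we may assume $\alpha\in\Delta_{\boh}$ with $\{x_\alpha,x_{-\alpha},h_\alpha\}$ an $\mathfrak{sl}_2$-triple. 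The routine implications: $(1)\Rightarrow(2)$ is immediate; $(1)\Rightarrow(3)$ holds because $(x_\alpha\otimes a)^nv\in V^{\mathrm{wt}(v)+n\alpha}$, which is $0$ for $n\gg0$ by $(1)$; and $(3)\Rightarrow(4)$ is the case $a=1$. For $(2)\Rightarrow(4)$: by Lemma~\ref{lemma:actseverywhere02}, $x_\alpha\otimes1$ acts either locally nilpotently everywhere or injectively everywhere on $V$; in the latter case, for $0\neq v\in V^\lambda$ with $\lambda$ as in $(2)$, the vectors $(x_\alpha\otimes1)^nv\in V^{\lambda+n\alpha}$ would all be nonzero, contradicting $(2)$.

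The real content is $(4)\Rightarrow(1)$. First, since $V$ is simple and finite weight, Proposition~\ref{proposition:idealcofinite} lets us replace $A$ by the finite-dimensional algebra $\bar A=A/\Ann_A(V)$: hypothesis $(4)$ concerns the operator $x_\alpha\otimes1$ and conclusion $(1)$ concerns $\Supp V$, neither of which changes. Writing $\bar A$ as a finite product of local Artinian algebras $R_1,\dots,R_m$ and applying Theorem~\ref{proposition:simpleofsumistensor}, we obtain $V\cong V_1\hat\otimes\cdots\hat\otimes V_m$ with $V_i$ a simple finite weight $(\g\otimes R_i)$-module. Since $V$ is graded by the joint eigenvalues of the $\mathfrak h\otimes\mathbf{1}_{R_i}$ (this refines the weight grading and is the multigrading on $V_1\otimes\cdots\otimes V_m$), and $x_\alpha\otimes1=\sum_i x_\alpha\otimes\mathbf{1}_{R_i}$ is a sum of commuting operators acting on distinct tensor factors, local nilpotence of $x_\alpha\otimes1$ forces it on each $x_\alpha\otimes\mathbf{1}_{R_i}$ — otherwise Lemma~\ref{lemma:actseverywhere02} makes that operator injective on $V_i$, and projecting onto the $i$-th slot shows $x_\alpha\otimes1$ is injective on $V$. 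Likewise a nonzero infinite $\alpha$-string in $V$ produces one in some $V_i$. So we are reduced to the case $\bar A$ local Artinian.

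Now fix $\lambda\in\Supp V$ and put $V'=\bigoplus_{n\in\Z}V^{\lambda+n\alpha}$. Because $x_{\pm\alpha}\otimes\bar A$ and $h_\alpha\otimes\bar A$ preserve $V'$, it is a module over the current algebra $\mathfrak s\otimes\bar A$, $\mathfrak s=\langle x_\alpha,x_{-\alpha},h_\alpha\rangle\cong\mathfrak{sl}_2$, with finite-dimensional $(h_\alpha\otimes1)$-eigenspaces (each equals a single $V^{\lambda+n\alpha}$) and $x_\alpha\otimes1$ acting locally nilpotently. Thus everything reduces to the assertion: a weight $\mathfrak{sl}_2\otimes R$-module ($R$ local Artinian) with finite-dimensional weight spaces on which $e\otimes1$ acts locally nilpotently has support bounded above, and moreover $e\otimes R$ then acts locally nilpotently. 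I would prove this strengthened statement by induction on $\dim_{\kc}R$: the base case $R=\kc$ is the classical $\mathfrak{sl}_2$ fact, and for the inductive step one uses that $\ker(e\otimes1)$ is stable under $e\otimes R$ (since $[e,e]=0$), and that for a nonzero ideal $I\subseteq\mathrm{soc}(R)$ both the $(\mathfrak{sl}_2\otimes I)$-invariants and the $(\mathfrak{sl}_2\otimes I)$-coinvariants are $\mathfrak{sl}_2\otimes(R/I)$-modules to which the induction applies, controlling $e\otimes R$ layer by layer. The hard part will be exactly this final step — bounding how far the nilpotent directions of $R$ can push the support once $e\otimes1$ is locally nilpotent — which is a statement about weight modules over truncated current algebras of $\mathfrak{sl}_2$; if one prefers not to grind the induction, it can be quoted from the literature on such modules.
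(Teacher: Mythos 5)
Your skeleton of implications, the reduction of the odd case to the even root $2\alpha$ via $2(x_\alpha\otimes 1)^2=x_{2\alpha}\otimes 1$, and the use of the nilpotent/injective dichotomy of Lemma~\ref{lemma:actseverywhere02} all match the paper's proof. The divergence is in the only implication with real content, $(4)\Rightarrow(1)$ for $\alpha\in\Delta_{\boh}$: the paper disposes of it in one line by invoking the argument of \cite[Proposition~2.2]{Lau18} for map Lie algebras (restrict to the subalgebra $\langle x_{\pm\alpha},h_\alpha\rangle\otimes A\cong\mathfrak{sl}_2\otimes A$; Lau's argument shows that local nilpotence of $x_\alpha\otimes 1$ forces the $\alpha$-string to be bounded above). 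Your preliminary reduction through $A/\Ann_A(V)$, the Chinese Remainder decomposition and Theorem~\ref{proposition:simpleofsumistensor} is not wrong, but it is unnecessary overhead for this step, and it quietly uses the simplicity of $V$ in a way Lau's direct argument does not need.

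The genuine gap is that you then do not prove the resulting statement about $\mathfrak{sl}_2\otimes R$-modules, and the induction you sketch does not close as stated. For a nonzero ideal $I\subseteq\mathrm{soc}(R)$, the invariants $(V')^{\mathfrak{sl}_2\otimes I}$ may well be zero (the string module $V'=\bigoplus_n V^{\lambda+n\alpha}$ is not simple, so no analogue of Claim~2 in Proposition~\ref{proposition:kernelisradicaluniform} is available), and invariants plus coinvariants do not exhaust $V'$: one would need the full filtration by powers of $(\mathfrak{sl}_2\otimes I)$, which need not terminate or be exhaustive for a non--finitely-generated weight module, so ``controlling $e\otimes R$ layer by layer'' is not yet an argument. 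Since you explicitly offer to quote the missing step from the literature, the fix is simply to make that citation precise: the statement you need is exactly \cite[Proposition~2.2]{Lau18} (whose proof produces, from an unbounded $\alpha$-string together with nilpotence of $x_\alpha\otimes 1$, arbitrarily many linearly independent vectors in a single weight space), and with that reference in place your proof coincides with the paper's.
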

\begin{proof}
    It is clear that \eqref{propitem:equivalentnilpotent01} implies \eqref{propitem:equivalentnilpotent02}, \eqref{propitem:equivalentnilpotent03} implies \eqref{propitem:equivalentnilpotent04}. If \eqref{propitem:equivalentnilpotent02} is true, then $x_{\alpha} \otimes a$ acts nilpotently on $V^{\lambda}$. By Lemma \ref{lemma:actseverywhere02}, $x_{\alpha} \otimes a$ acts nilpotently everywhere on $V$. 
    Now suppose \eqref{propitem:equivalentnilpotent04} is true. If $\alpha \in \Delta$ is even, we can use the same argument of \cite[Proposition 2.2]{Lau18} in the map Lie algebra case. Assume $\alpha \in \Delta_{\bon}$ with $2\alpha \in \Delta_{\boh}$. Suppose there is infinitely many $n$ such that $V^{\lambda +n \alpha} \neq 0$. Since $2(x_{\alpha} \otimes 1)^2 = x_{2\alpha} \otimes 1$, we have that $x_{\alpha} \otimes 1$ acts nilpotently on $V$ if and only if $x_{2\alpha} \otimes 1$ acts nilpotently on $V$. However, $V^{\lambda + 2n\alpha} \neq 0$ or $V^{\lambda + \alpha + 2n\alpha} \neq 0$ for infinitely many $n \geq 0$, which contradicts the fact $x_{2\alpha} \otimes 1$ acts nilpotently on $V$ (notice that $2\alpha$ is an even root and we already proved the statement for this case). Therefore, $V^{\lambda+n\alpha}$ is zero for all but finite many $n>0$.
\end{proof}

    \begin{definition}
        Let $\alpha \in \Delta$. We say that $\alpha$ is \emph{locally finite} on $V$ if one, hence all, of conditions of Proposition \ref{proposition:equivalentnilpotent} holds. Similarly, we say that $\alpha$ is \emph{injective} if $x_{\alpha} \otimes 1$ acts injectively on $V$, and we denote the set of all injective roots on $V$ by $\inj V$. 
    \end{definition}

     We say a subset $R \subset \Delta$ is \emph{closed} if $\alpha + \beta \in R$ whenever $\alpha, \beta \in R$ and $\alpha + \beta \in \Delta$. For each $R \subset \Delta$ we define $-R = \{ -\alpha \mid \alpha \in R\}$. A closed set $R \subset \Delta$ is called a \emph{parabolic set} if $R \cup -R= \Delta$.
    \begin{lemma}\label{lemma:rootproperties00}
    If $V$ is a simple finite weight $\mathcal{G}$-module, then $\inj V$ is closed.
    \end{lemma}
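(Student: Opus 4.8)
I will prove the statement directly: given $\alpha,\beta\in\inj V$ with $\gamma:=\alpha+\beta\in\Delta$, I will show that $\gamma\in\inj V$. The basic tool is a support-theoretic reformulation of injectivity. By Lemma~\ref{lemma:actseverywhere02} the operator $x_{\delta}\otimes 1$ acts either locally nilpotently everywhere or injectively everywhere on $V$; combining this with Proposition~\ref{proposition:equivalentnilpotent}, a root $\delta$ that is even, or odd with $2\delta\in\Delta$, lies in $\inj V$ if and only if $V^{\mu+n\delta}\neq 0$ for every $\mu\in\Supp V$ and every $n\geq 0$. By Lemma~\ref{lemma:equivalentrootnilinv}(2) an isotropic odd root acts nilpotently, so it never lies in $\inj V$; in particular both $\alpha$ and $\beta$ are either even or non-isotropic odd, so the reformulation applies to each of them.

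The main step is to propagate injectivity along the sum. Fix $\mu\in\Supp V$ and a nonzero $v\in V^{\mu}$. Since $x_{\beta}\otimes 1$ acts injectively, $(x_{\beta}\otimes 1)^{n}v$ is a nonzero vector of weight $\mu+n\beta$; and since $x_{\alpha}\otimes 1$ also acts injectively, $(x_{\alpha}\otimes 1)^{n}(x_{\beta}\otimes 1)^{n}v$ is a nonzero vector of weight $\mu+n\alpha+n\beta=\mu+n\gamma$. Hence $V^{\mu+n\gamma}\neq 0$ for all $\mu\in\Supp V$ and all $n\geq 0$. If $\gamma$ is even or non-isotropic odd, the reformulation gives $\gamma\in\inj V$ and we are done; this covers everything except the case that $\gamma$ is an isotropic odd root. (In particular this already settles all $\g$ of type I, as well as all type II superalgebras whose odd roots are all isotropic, since for those $\inj V$ contains only even roots and hence $\gamma$ is automatically even.)

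The remaining case — $\gamma$ isotropic odd — can only occur for $\g$ of type $B(m,n)$, and then, after relabelling and adjusting signs, $\gamma=\epsilon_i+\delta_j$ with $\alpha=\epsilon_i$ and $\beta=\delta_j$; here $x_{\gamma}\otimes 1$ is automatically nilpotent by Lemma~\ref{lemma:equivalentrootnilinv}(2), so $\gamma\notin\inj V$ no matter what, and one must instead rule out the configuration $\alpha,\beta\in\inj V$ altogether. The plan is to use the bracket relations of $\g$: the $\alpha$-root string through $\beta$ is $\{\beta-\alpha,\beta,\beta+\alpha\}$ with $\langle\beta,\alpha^{\vee}\rangle=0$, so the relevant structure constants are all nonzero and $x_{\beta}\otimes 1=c^{-1}\,[x_{-\alpha}\otimes 1,\,x_{\gamma}\otimes 1]$ for some $c\neq 0$; combining this identity with $(x_{\gamma}\otimes 1)^{2}=0$ and the finiteness of the weight multiplicities of $V$, one shows that $x_{\beta}\otimes 1$ cannot act injectively, contradicting $\beta\in\inj V$. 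Turning this last deduction into a clean argument is the main obstacle of the proof; everything preceding it is an immediate consequence of the results recalled above.
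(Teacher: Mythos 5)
Your treatment of the main case is exactly the paper's own argument: since $\alpha,\beta\in\inj V$, Lemma~\ref{lemma:actseverywhere02} makes $x_{\alpha}\otimes 1$ and $x_{\beta}\otimes 1$ injective everywhere, so $(x_{\alpha}\otimes 1)^n(x_{\beta}\otimes 1)^n v\neq 0$ shows that the support of $V$ is unbounded along the $(\alpha+\beta)$-direction through every weight, and Lemma~\ref{lemma:actseverywhere02} together with Proposition~\ref{proposition:equivalentnilpotent} then forces $x_{\alpha+\beta}\otimes 1$ to act injectively. Up to this point your proposal is correct and coincides with the published proof.

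The gap is precisely the case you isolate, namely $\gamma=\alpha+\beta$ an isotropic odd root (which, as you say, can only happen in type $B(m,n)$ with $\{\alpha,\beta\}$ of the form $\{\pm\epsilon_i,\pm\delta_j\}$). There Proposition~\ref{proposition:equivalentnilpotent} does not apply to $\gamma$, and by Lemma~\ref{lemma:equivalentrootnilinv} the vector $x_{\gamma}\otimes 1$ squares to zero, so $\gamma$ can never be injective; hence closure can only hold if this configuration of injective roots is impossible, and you do not prove that it is. You only sketch a strategy and explicitly concede you cannot complete it, so as a self-contained proof the proposal is incomplete at exactly the step you flag. Moreover, the route you suggest, writing $x_{\beta}\otimes 1$ as a nonzero multiple of $[x_{-\alpha}\otimes 1, x_{\gamma}\otimes 1]$ and combining this with $(x_{\gamma}\otimes 1)^2=0$ and finite multiplicities, is not obviously workable: note that $x_{\gamma}\otimes 1$ super-commutes with both $x_{\alpha}\otimes 1$ and $x_{\beta}\otimes 1$ (neither $\alpha+\gamma$ nor $\beta+\gamma$ is a root), so no growth of weight multiplicities along the $\gamma$-string results from these relations alone. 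For comparison, the paper's two-line proof does not treat this case separately either; its final application of Proposition~\ref{proposition:equivalentnilpotent} to $\alpha+\beta$ tacitly presupposes that $\alpha+\beta$ is even or non-isotropic odd, so your analysis pinpoints exactly where the published argument is silent, but it does not close that case.
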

    \begin{proof}
    Let $\alpha,\beta \in \inj V$ such that $\alpha+\beta \in \Delta$. By Lemma \ref{lemma:actseverywhere02} and Proposition \ref{proposition:equivalentnilpotent}, $x_{\alpha} \otimes 1$ and $x_{\beta} \otimes 1$ acts injectively everywhere on $V$. Thus $(x_{\alpha} \otimes 1)(x_{\beta}\otimes 1)$ acts injectively, so $V^{\lambda +n(\alpha + \beta)}$ is non zero for some $\lambda \in \Supp V$ and all $n \geq 0$. Applying Lemma \ref{lemma:actseverywhere02} and Proposition \ref{proposition:equivalentnilpotent}, $x_{\alpha +\beta} \otimes 1$ acts injectively on $V$.
    \end{proof}
    
    Following \cite{DMP00}, we define the shadow of $V$ as follows. First, we define $C_V$ as the saturation of the monoid $\Z_+ \inj V\subseteq \mathcal{Q}$, i.e. $$C_V = \left \{ \lambda \in \mathcal{Q} \mid m \lambda \in \Z_+ \inj V  \text{ for some }m > 0 \right \}.$$ Secondly, we decompose $\Delta$ into four disjoint sets $\Delta_V^i$, $\Delta_V^f$, $\Delta_V^+$, $\Delta_V^-$ defined by
\begin{align*}
\Delta_V^i & = \left \{\alpha \in \Delta \mid \pm \alpha \in C_V \right\}, \\
\Delta_V^f & = \left \{\alpha \in \Delta \mid \pm \alpha \notin C_V  \right\}, \\
\Delta_V^+ & = \left \{\alpha \in \Delta \mid \alpha \notin C_V , \ -\alpha \in C_V  \right\}, \\
\Delta_V^- & = \left \{\alpha \in \Delta \mid \alpha \in C_V , \ -\alpha \notin C_V  \right\}.
\end{align*}
Finally, we define the following subspaces of $\g$
    $$ \g_V^+ = \bigoplus_{\alpha \in \Delta_V^+} \g^\alpha, \quad \g_V^i = \h \oplus \bigoplus_{\alpha \in \Delta_V^i} \g^{\alpha}, \quad \g_V^- = \bigoplus_{\alpha \in \Delta_V^-} \g^{\alpha}.$$
    The triple $(\g_V^-, \g_V^0, \g_V^+)$ is called the \emph{shadow} of $V$. 
        \begin{remark}
        Let $V$ be a simple weight $\cG$-module.
\begin{enumerate}
\item If $\alpha,\beta\in C_V$, then $\alpha+\beta\in C_V$.
\item If $\alpha\in C_V$, then there is $m\in \Z_+$ such that $\widetilde{\alpha}:=m\alpha\in \Z_+ \inj V$ satisfies the following condition: for any weight $\lambda\in \Supp V$, we have that $V^{\lambda+n\widetilde{\alpha}}\ne \{0\}$ for all $n\geq 0$. 
\item The following observation is an important difference between super and non super setting: the set $\Delta_V^i$ is not necessarily equal to $\inj V$. Indeed, since the Lie superalgebra $\g = D(2,1,a)$ admits cuspidal modules \cite{DMP00}, we can choose $V$ such that $\Delta_V^i = \Delta$. Notice however that there is no odd root in $\inj V$, as all roots in $\Delta_{\bon}$ are locally finite  ($2\alpha\notin \Delta$ for any $\alpha\in \Delta_{\bon}$). In this case, we actually have that $\Delta_{\boh} \subseteq \inj V$, and $2(\pm \varepsilon_1 \pm \varepsilon_2 \pm \varepsilon_3) = (\pm 2\varepsilon_1) + (\pm 2\varepsilon_2) +(\pm 2\varepsilon_3)\in \Z_+ \Delta_{\boh}\subseteq C_V$ (we are using the same notation as in \cite[pg.~52]{Kac77}).
\end{enumerate}
\end{remark}

Let $V$ be a simple weight $\cG$-module. For $\alpha \in \Delta$ and $\lambda \in \Supp V$,  we define the \emph{$\alpha$-string through $\lambda$} to be the set $\left \{x \in \Q \mid \lambda + \alpha \in \Supp V \right \}$. The following lemma describes the shadow of $V$ in terms of strings.

\begin{lemma}\label{lemma:dasalphastring}
\begin{enumerate}
    \item $\alpha \in \Delta_V^f$ if and only if the $\alpha$-string through any $\lambda \in  \Supp V$ is bounded.
    \item $\alpha \in \Delta_V^i$ if and only if the $\alpha$-string through any $\lambda \in  \Supp V$ is unbounded in both directions.
    \item $\alpha \in \Delta_V^+$ if and only if the $\alpha$-string through any $\lambda \in  \Supp V$ is bounded from above only.
    \item $\alpha \in \Delta_V^-$ if and only if the $\alpha$-string through any $\lambda \in  \Supp V$ is bounded from below only.
\end{enumerate}
\end{lemma}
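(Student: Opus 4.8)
The plan is to prove all four equivalences by analyzing how the raising and lowering operators $x_{\pm\alpha}\otimes 1$ act on weight spaces, using the earlier dichotomy results (Lemma~\ref{lemma:actseverywhere02}, Proposition~\ref{proposition:equivalentnilpotent}) together with the definition of $C_V$ and the sets $\Delta_V^i,\Delta_V^f,\Delta_V^{\pm}$. The key observation is that each root $\alpha$ is either locally finite or injective on $V$ (Proposition~\ref{proposition:equivalentnilpotent} and Lemma~\ref{lemma:actseverywhere02}), and that ``$\alpha$ locally finite'' means precisely that the $\alpha$-string through any $\lambda\in\Supp V$ is bounded from above, while ``$\alpha$ injective'' means $V^{\lambda+n\alpha}\ne 0$ for all $n\ge 0$. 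Note that since the classification into $\Delta_V^i,\Delta_V^f,\Delta_V^+,\Delta_V^-$ is by symmetric conditions on $\pm\alpha$, and the four string-behaviors (bounded, unbounded both ways, bounded above only, bounded below only) are also governed by the pair of conditions at $\pm\alpha$, it suffices to prove two implications: (a) $\alpha\in\Delta_V^i$ $\Rightarrow$ the $\alpha$-string is unbounded in both directions, and the converse direction packaged appropriately; and similarly for $\Delta_V^-$ (the case $\Delta_V^+$ following by replacing $\alpha$ with $-\alpha$, and $\Delta_V^f$ being the logical complement).

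First I would establish the forward direction of (2). If $\alpha\in\Delta_V^i$, then $\pm\alpha\in C_V$, so by the remark preceding the lemma there is $m>0$ with $\widetilde{\alpha}=m\alpha\in\Z_+\inj V$ such that $V^{\lambda+n\widetilde{\alpha}}\ne\{0\}$ for all $n\ge 0$ and all $\lambda\in\Supp V$; applying this to both $\alpha$ and $-\alpha$ gives $V^{\lambda+n\alpha}\ne\{0\}$ for all $n\in\Z$ (interpolating: if $V^{\lambda+km\alpha}\ne 0$ for all $k\ge 0$ then in particular every weight $\lambda+j\alpha$ lies between two nonzero ones, but one must actually argue that the string has no internal gaps — this uses that $\inj V$ is closed and that $x_\alpha\otimes 1$ injective forces $V^\mu\ne 0\Rightarrow V^{\mu+\alpha}\ne 0$, hence once we reach an unbounded tail in the $+\alpha$ and $-\alpha$ directions the whole line is in the support). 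Conversely, if the $\alpha$-string through every $\lambda$ is unbounded in both directions, then in particular $V^{\lambda+n\alpha}$ is nonzero for infinitely many $n>0$ and infinitely many $n<0$; by Proposition~\ref{proposition:equivalentnilpotent} (for $\alpha$ even or $2\alpha\in\Delta$) neither $x_\alpha\otimes 1$ nor $x_{-\alpha}\otimes 1$ acts nilpotently, so both are injective, whence $\pm\alpha\in\inj V\subseteq C_V$ and $\alpha\in\Delta_V^i$. For odd roots $\alpha$ with $2\alpha\notin\Delta$, Lemma~\ref{lemma:equivalentrootnilinv}(2) says $\alpha$ is always locally finite, so the $\alpha$-string is always bounded above, and such $\alpha$ can never be in $\Delta_V^i$ nor have a both-ways-unbounded string — consistent, and the remark's item (3) shows $\Delta_V^i$ detection must go through $C_V$, not $\inj V$, but the string statement is unaffected.

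Next I would handle (4): $\alpha\in\Delta_V^-$ means $\alpha\in C_V$ but $-\alpha\notin C_V$. From $\alpha\in C_V$, the same argument as above gives $V^{\lambda+n\alpha}\ne 0$ for all $n\ge 0$, so the string is unbounded from below (in the sense of going in the $+\alpha$ direction indefinitely — I will fix the sign conventions so ``bounded from below only'' in the statement matches ``the $+\alpha$ tail is infinite''). From $-\alpha\notin C_V$ I need that the string is bounded in the $-\alpha$ direction: if it were not, then $V^{\lambda-n\alpha}\ne 0$ for infinitely many $n>0$, forcing $x_{-\alpha}\otimes 1$ non-nilpotent hence injective (Proposition~\ref{proposition:equivalentnilpotent}), giving $-\alpha\in\inj V\subseteq C_V$, a contradiction. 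The converse for (4) is symmetric. Finally (1) follows since $\Delta_V^f$ is by definition the complement of $\Delta_V^i\cup\Delta_V^+\cup\Delta_V^-$, and ``bounded'' is the complement of the other three string-behaviors, so (1) is forced by (2)--(4); alternatively argue directly that $\pm\alpha\notin C_V$ iff neither $x_{\pm\alpha}\otimes 1$ is injective iff both act nilpotently iff the string is bounded on both sides.

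The main obstacle I anticipate is the ``no internal gaps in the string'' point in part (2): one knows $V^{\lambda+n\alpha}\ne 0$ for all sufficiently spread-out $n$ (multiples of $m$) in both directions, and one must upgrade this to \emph{every} integer $n$. The clean way is: since $\alpha\in C_V$ some positive multiple $\widetilde\alpha=m\alpha\in\Z_+\inj V$ and likewise $-m\alpha\in\Z_+\inj V$; but an element of $\Z_+\inj V$ is a sum of injective roots, each of which (being injective, by Lemma~\ref{lemma:actseverywhere02}) sends nonzero weight spaces to nonzero weight spaces; chaining these, starting from any $\lambda\in\Supp V$ one reaches $\lambda+\widetilde\alpha$ through a path of nonzero weight spaces, but that path need not lie on the $\alpha$-line. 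The correct fix is that once we know the $+\alpha$ tail and the $-\alpha$ tail of the string are both infinite, pick $N$ large with $V^{\lambda+N\alpha}\ne 0$ and $V^{\lambda-N\alpha}\ne 0$; then for $\alpha$ even, $\{x_{\pm\alpha},h_\alpha\}$ is an $\mathfrak{sl}_2$-triple acting on $V$ with $h_\alpha$ acting semisimply, and on the $\mathfrak{sl}_2\otimes 1$-submodule generated by a vector in $V^{\lambda+N\alpha}$ the weights form an unbroken string — so there are no gaps between $\lambda-N\alpha$ and $\lambda+N\alpha$. This $\mathfrak{sl}_2$-string argument is the technical heart and should be spelled out; for $\alpha$ odd with $2\alpha\in\Delta$ one passes to $2\alpha$ (even) via $2(x_\alpha\otimes 1)^2=x_{2\alpha}\otimes 1$ as in Proposition~\ref{proposition:equivalentnilpotent}, controlling the parity-$\lambda$ and parity-$(\lambda+\alpha)$ sub-strings separately.
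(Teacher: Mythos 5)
Your argument has a genuine gap precisely at the point the paper flags as the main difference between the super and non-super settings: isotropic odd roots. You assert that for an odd root $\alpha$ with $2\alpha\notin\Delta$, local nilpotency of $x_\alpha\otimes 1$ (Lemma~\ref{lemma:equivalentrootnilinv}(2)) forces the $\alpha$-string to be bounded above, and that such $\alpha$ ``can never be in $\Delta_V^i$.'' Both claims are false, and the second directly contradicts the remark preceding the lemma: for a cuspidal module $V$ over $D(2,1;a)$ one has $\Delta_V^i=\Delta$, so every isotropic odd root lies in $\Delta_V^i$ even though no odd root is injective; its string is unbounded in both directions because $2\alpha$ is a sum of injective \emph{even} roots. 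The point is that boundedness of the $\alpha$-string is a property of $\Supp V$, not of the operator $x_\alpha\otimes 1$ alone: the support can propagate in the $\alpha$-direction through other roots. Consequently the implication you use repeatedly --- ``string unbounded in the $\alpha$-direction $\Rightarrow$ $x_\alpha\otimes 1$ injective $\Rightarrow$ $\alpha\in C_V$'' --- is only valid for even roots and odd roots with $2\alpha\in\Delta$ (where Proposition~\ref{proposition:equivalentnilpotent} applies); for isotropic odd roots it breaks down, and this affects the hard (``bounded'') halves of parts (1), (3) and (4) as well as the converse in (2). Since the four memberships and the four string behaviours are each mutually exclusive and exhaustive, the whole lemma reduces to the forward implications, and it is exactly the forward implication ``$\pm\alpha\notin C_V$ $\Rightarrow$ the string is bounded in the corresponding direction'' that your proposal does not establish for isotropic $\alpha$.

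To close the gap one has to argue through the cone $C_V$ rather than through $x_{\pm\alpha}$: for instance, given $\alpha\notin C_V$, use the fact that $C_V$ is a saturated finitely generated cone to produce a linear functional $l:\mathcal{Q}\to\Z$ with $l=0$ on $\Delta_V^i$, $l<0$ on $\Delta_V^-$ (hence $l\le 0$ on $\inj V$) and $l(\alpha)>0$; the associated triangular decomposition $T$ has all roots of $\Delta_T^+$ locally finite, so as in the proof of Theorem~\ref{theorem:moduleisinduced} there is a weight vector $w$ with $\mathcal{G}_T^+w=0$, whence $V=U(\mathcal{G}_T^-)U(\mathcal{G}_T^0)w$ and $l(\Supp V)$ is bounded above; since $l(\alpha)>0$, the $\alpha$-string through every weight is bounded above. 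Combined with the easy direction (for $\beta\in C_V$, chaining the injective root operators along a decomposition of $m\beta$ as a sum of injective roots shows every string is unbounded in the $\beta$-direction --- the part of your argument that is correct and works for all roots), this yields all four statements. Finally, the ``internal gaps'' issue you single out as the technical heart is a non-issue: the lemma only asserts (un)boundedness of the string, not that it has no gaps, so the $\mathfrak{sl}_2$-interpolation you sketch is not needed.
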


Let $V$ be a simple finite weight $\mathcal{G}$-module. Then, it follows from Lemma \ref{lemma:rootproperties00} that  $\g_V^+$, $\g_V^i$, and $\g_V^-$ are subalgebras of $\g$. In particular, $\mathcal{G}_V^+= \g_V^+ \otimes A$, $\mathcal{G}_V^i= \g_V^i \otimes A$, and $\mathcal{G}_V^- = \g_V^-\otimes A $ are subalgebras of $\mathcal{G}$.

\begin{proposition}\label{proposition:twocasesparte1}
Let $V$ be a simple finite weight $\mathcal{G}$-module.
\begin{enumerate}
    \item\label{theoremitem:theoremclassification001} If $\Delta_{\boh} \subset \Delta_V^{f}$, then $V$ is finite-dimensional. In this case, $\Delta = \Delta_V^f$, and $V$ is a simple highest weight module.
    \item If $\Delta_{\boh} \subset \inj V$, then $V$ is bounded. Furthermore, there is a finite subset $\Theta \subset \Supp V$ such that $\Supp V = \Theta + \mathcal{Q}_{\boh}$, and $\dim V^{\mu} = \dim V^{\lambda}$ if there is $\gamma \in \Theta$ such that $\mu, \lambda \in \gamma + \mathcal{Q}_{\boh}$.
\end{enumerate}
\end{proposition}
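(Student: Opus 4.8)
The plan is to analyze each case by combining the string description of the shadow (Lemma~\ref{lemma:dasalphastring}) with the $\mathfrak{sl}_2$-theory coming from the triples $\{x_\alpha, x_{-\alpha}, h_\alpha\}$ for $\alpha \in \Delta_{\boh}$, and then propagate finiteness/boundedness statements for the even part $\g_{\boh}$-module structure to the whole module.

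For part~(1), assume $\Delta_{\boh} \subset \Delta_V^f$. By Lemma~\ref{lemma:dasalphastring}(1), every $\alpha$-string through every $\lambda \in \Supp V$ is bounded for all even roots $\alpha$. First I would restrict $V$ to $\g_{\boh} \cong \g_{\boh} \otimes \k$; since every root-string is finite, each $x_\alpha$ ($\alpha \in \Delta_{\boh}$) acts locally nilpotently on $V$, so by Proposition~\ref{proposition:equivalentnilpotent} (applied to the even roots, or directly by standard $\mathfrak{sl}_2$-arguments) $V$ is an integrable $\g_{\boh}$-module with finite weight multiplicities. The support of an integrable finite weight module over a semisimple (or reductive) Lie algebra with bounded multiplicities is contained in a finite union of shifted root lattices and is in fact finite once one knows it is contained in the convex hull of a single weight's Weyl-orbit; the key point is that since $\Supp V \subseteq \lambda + \mathcal{Q}$ and all even root strings are bounded, $\Supp V$ is a bounded subset of the lattice $\mathcal{Q}$, hence finite. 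Therefore $V$ is finite-dimensional. Once $V$ is finite-dimensional, the odd roots also act nilpotently (any nilpotency statement is automatic on a finite-dimensional module), so $C_V$ cannot contain any root at all — more precisely a finite-dimensional module has $\inj V = \emptyset$, so $C_V = \{0\}$ and $\Delta = \Delta_V^f$. Finally, a finite-dimensional simple module over a basic classical Lie superalgebra is a highest weight module with respect to any Borel (this is standard, cf.\ \cite{Kac77}), so $V$ is a simple highest weight module.

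For part~(2), assume $\Delta_{\boh} \subset \inj V$. Fix $\alpha \in \Delta_{\boh}$; by Proposition~\ref{proposition:equivalentnilpotent} and Lemma~\ref{lemma:actseverywhere02}, $x_\alpha \otimes 1$ and $x_{-\alpha} \otimes 1$ both act injectively everywhere on $V$, so the $\mathfrak{sl}_2$-triple $\{x_\alpha, x_{-\alpha}, h_\alpha\}$ acts on $V$ with every $\alpha$-string infinite in both directions. A standard argument in the $\mathfrak{sl}_2$-theory of weight modules (as in Mathieu's work, and used in \cite{Lau18, BLL15}) shows that along such a "dense" $\mathfrak{sl}_2$-string the weight multiplicities are eventually constant, and in fact nondecreasing then nonincreasing — hence bounded on the string; doing this for a set of simple even roots and using that $\Supp V$ is a single coset of a sublattice, one gets that $\dim V^\mu$ depends only on the coset $\mu + \mathcal{Q}_{\boh}$ in $\Supp V / \mathcal{Q}_{\boh}$, up to a bounded discrepancy, and is bounded. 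More carefully: since $x_{\pm\alpha}\otimes 1$ act injectively everywhere, $\Supp V + \mathcal{Q}_{\boh} = \Supp V$, so $\Supp V$ is a union of $\mathcal{Q}_{\boh}$-cosets. For the multiplicity statement I would show, using the injectivity of $x_\alpha \otimes 1$ and $x_{-\alpha}\otimes 1$ together with the finiteness of weight multiplicities, that $(x_\alpha \otimes 1)(x_{-\alpha}\otimes 1)$ restricted to $V^\mu \oplus V^{\mu+\alpha} \oplus \cdots$ forces $\dim V^{\mu} = \dim V^{\mu+\alpha}$ once one is far enough along the string — then combine over the finitely many $\alpha$ spanning $\mathcal{Q}_{\boh}$ to conclude $\dim V^\mu = \dim V^\lambda$ whenever $\mu - \lambda \in \mathcal{Q}_{\boh}$ lies on a common string through both; an easy connectivity argument inside a coset upgrades this to: $\dim V^\mu = \dim V^\lambda$ for all $\mu, \lambda$ in the same $\mathcal{Q}_{\boh}$-coset. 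Boundedness of $\dim V^\lambda$ is then immediate from finiteness of each multiplicity on a single coset (since, on each coset, all multiplicities coincide) together with the fact that $\Supp V$ is contained in finitely many cosets — which holds because $\Supp V \subseteq \lambda_0 + \mathcal{Q}$ and $\mathcal{Q}/\mathcal{Q}_{\boh}$ is finite for a basic classical Lie superalgebra (every odd root doubled, or an integer combination thereof, lies in $\mathcal{Q}_{\boh}$). Taking $\Theta$ to be a set of coset representatives gives the claimed $\Supp V = \Theta + \mathcal{Q}_{\boh}$.

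The main obstacle is the multiplicity-constancy statement in part~(2): one must show that injectivity of both $x_\alpha \otimes 1$ and $x_{-\alpha}\otimes 1$ everywhere, plus finiteness of all weight multiplicities, forces $\dim V^\mu$ to be constant along each $\alpha$-string — this is exactly the super-analogue of the key lemma underlying Mathieu's and Lau's treatment, and while the $\mathfrak{sl}_2$-triple is purely even so the argument is essentially the classical one, care is needed because the $x_{\pm\alpha}\otimes a$ for $a \in A$ only give the eventual-constancy from one direction and one must rule out unbounded growth; the clean way is to observe that $(x_{-\alpha}\otimes 1)(x_\alpha \otimes 1)$ and $(x_\alpha\otimes 1)(x_{-\alpha}\otimes 1)$ differ by $h_\alpha \otimes 1$, which acts as the scalar $\langle \mu, \alpha^\vee\rangle$ on $V^\mu$, so on the region of the string where this scalar is nonzero both composites are injective on the relevant weight spaces, yielding $\dim V^{\mu-\alpha}\le \dim V^\mu \le \dim V^{\mu+\alpha}$ on one half-string and the reverse on the other, whence constancy far out; the finitely many exceptional weights in the middle are then handled by injectivity of $x_{\pm\alpha}\otimes 1$ directly.
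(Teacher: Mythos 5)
There are two genuine gaps, both at the points where the super/map structure (simplicity over $\cG$, the algebra $A$, and the odd part) must actually be used. In part (1), your key step is ``all even root strings are bounded and $\Supp V\subseteq\lambda+\mathcal{Q}$, hence $\Supp V$ is a bounded subset of $\mathcal{Q}$, hence finite.'' This inference is not valid: a subset of a rank-two root lattice can meet every line in a root direction in at most two points and still be infinite, so boundedness of all $\alpha$-strings through all weights does not bound the support. The missing ingredient is simplicity of $V$ as a $\cG$-module together with Proposition~\ref{proposition:idealcofinite}: the paper writes $V=U(\g^{\alpha_t}\otimes A)\cdots U(\g^{\alpha_1}\otimes A)U(\h\otimes A)\bigwedge\left(\g_{\bon}\otimes A/\Ann_A(V)\right)V^{\lambda}$ by PBW, where the exterior-algebra factor is finite-dimensional precisely because $A/\Ann_A(V)$ is, and then adds the even root spaces one at a time, each stage keeping the weight set finite by local finiteness of the even roots. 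Your restriction to $\g_{\boh}$ loses simplicity (so the standard ``integrable simple $\Rightarrow$ finite-dimensional'' fact is unavailable), and you never control the odd part nor the algebra $A$ before concluding finite-dimensionality.

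In part (2), the constancy of multiplicities along a $\mathcal{Q}_{\boh}$-coset is fine (and simpler than you make it: since $x_{\alpha}\otimes 1$ and $x_{-\alpha}\otimes 1$ act injectively \emph{everywhere}, the two inequalities $\dim V^{\gamma}\le\dim V^{\gamma+\alpha}\le\dim V^{\gamma}$ hold at every weight, with no need for the eventual-constancy $\mathfrak{sl}_2$ analysis or exceptional middle weights). The gap is your justification that $\Supp V$ meets only finitely many $\mathcal{Q}_{\boh}$-cosets: the claim that $\mathcal{Q}/\mathcal{Q}_{\boh}$ is finite is false for basic classical Lie superalgebras of type I. For $A(m,n)$ or $C(n+1)$ the map sending a weight to the sum of its $\epsilon$-coefficients is a surjection $\mathcal{Q}\to\Z$ that kills $\mathcal{Q}_{\boh}$ but sends odd roots to $\pm1$, so no nonzero multiple of an odd root lies in $\mathcal{Q}_{\boh}$ and the quotient is infinite. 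The correct source of the finite set $\Theta$ is again simplicity plus PBW: $\Supp V$ is contained in $\Theta+\mathcal{Q}_{\boh}$ where $\Theta$ is the (finite) weight set of $\bigwedge\left(\g_{\bon}\otimes A/\Ann_A(V)\right)V^{\lambda}$, finite-dimensionality coming from Proposition~\ref{proposition:idealcofinite}; boundedness then follows from constancy on each of the finitely many cosets. As written, your argument proves the statement only for type II superalgebras.
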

\begin{proof}

(1): Suppose $\Delta= \Delta^f_V$. Write $\Delta_{\boh}= \{ \alpha_1,\dots,\alpha_t\}$ and $\Delta_{\bon} = \{ \beta_1,\dots, \beta_s \}$. Let $\lambda \in \Supp V$, and define 
$$W_{0}(\lambda) =  \bigwedge \left (\g_{\bon} \otimes A \right )V^{\lambda} =   \bigwedge \left (\g_{\bon} \otimes A/\Ann_A (V) \right )V^{\lambda},$$
$$W_i(\lambda) = U(\g^{\alpha_i} \otimes A)  \dots  U(\g^{\alpha_1} \otimes A) U(\h \otimes A) W_0 (\lambda),$$ where $i=1,\dots,t$. Since $W_i(\lambda)$ is a weight $\h \otimes 1$-module, we can define $S_i(\lambda)$ as its weights. By the PBW theorem and the simplicity of $V$, $W_t(\lambda) = V$ and $S_t(\lambda) = \Supp V$. $W_0(\lambda)$ has finite dimension and $S_0(\lambda)$ is finite, because $ A/\Ann_A (V)$ and, therefore, $\bigwedge \left (\g_{\bon} \otimes A/\Ann_A (V) \right )$ has finite dimension and $V^{\lambda}$ has finite dimension. Therefore, by Proposition \ref{proposition:equivalentnilpotent}, $$S_1 (\lambda) \subset \bigcup_{\gamma \in S_0(\lambda)} \Supp V \cap  \{\gamma + n \alpha_i \mid i=1,\dots,s, \ n\geq 0 \}$$ is a finite set. Suppose, by induction, that $S_i(\lambda)$ is finite. By Proposition \ref{proposition:equivalentnilpotent}, for each $\gamma \in S_i(\lambda)$ the set $\Supp V \cap \{\gamma + n \alpha_i \mid n \geq 0 \}$ is finite. Therefore, $S_{i+1} \subset \bigcup_{\gamma \in S_i(\lambda)} \Supp V \cap \left \{\gamma + n \alpha_i \mid n \geq 0 \right \} $ is a finite union of finite sets, we see $S_{i+1}(\lambda)$ is a finite set as well. Consequently, $\Supp V$ is a finite set. Since $V$ has a finite number of weights and it has finite-dimensional weight spaces, $V$ is a finite-dimensional $\mathcal{G}$-module. 
         
(2): Suppose $\Delta_{\boh} \subset \inj V$. As we did in the first part, we define $$W_{0}(\lambda) =  \bigwedge \left (\g_{\bon} \otimes A \right )V^{\lambda} =   \bigwedge \left (\g_{\bon} \otimes A/\Ann_A (V) \right )V^{\lambda},$$ for some fixed $\lambda \in \Supp V$. By the same argument given in the first part, $W_0(\lambda)$ has finite dimension, thus the set $\Theta$ of its weights is finite. By the PBW Theorem and the simplicity of $V$, we have that $V=\prod_{\alpha \in \Delta_{\boh}}  U(\g^{\alpha} \otimes A) U(\h \otimes A)W_0(\lambda)$, thus $\Supp V = \Theta + \mathcal{Q}_{\boh}$ since $\Delta_{\boh} \subset \inj V$. 
         
Let $\alpha \in \Delta_{\boh}$ and $\gamma \in \Theta$, then $\alpha, -\alpha \in \inj V$ and $V^{\gamma+n\alpha} \neq 0$ for all $n \in \Z$. Hence $V$ has infinite dimension. Since $x_{\alpha} \otimes 1$ acts injectively, the linear map
\begin{align*}
V^{\gamma} &\rightarrow V^{\gamma + \alpha} \\
v&\mapsto (x_{\alpha} \otimes 1) v
\end{align*}
is injective, therefore $\dim V^{\gamma} \leq \dim V^{\gamma+\alpha}$. Likewise, $x_{-\alpha} \otimes 1$ acts injectively on $V^{\gamma+\alpha}$ and thus the linear map \begin{align*}
V^{\gamma + \alpha } &\rightarrow V^{\gamma} \\
v&\mapsto (x_{-\alpha} \otimes 1) v
\end{align*} is injective, which implies that $\dim V^{\gamma  + \alpha} \leq \dim V^{\gamma}$. Repeating this argument we conclude that $\dim V^{\gamma} = \dim V^{\gamma + \beta}$ for all $n \in \Z$ and $\beta \in \mathcal{Q}_{\boh}$. 
		
Since $\Theta$ is finite and $\dim V^{\gamma}$ is finite for all $\gamma \in \Theta$, we conclude $V$ is bounded.
\end{proof}
\begin{corollary}\label{corollary:thereisinjroot}
Assume that $V$ is a simple infinite-dimensional weight $\mathcal{G}$-module. Then $\inj V \cap \Delta_{\boh} \neq \emptyset$.
\end{corollary}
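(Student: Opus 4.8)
The plan is to argue by contradiction: suppose $V$ is a simple infinite-dimensional weight $\cG$-module with $\inj V \cap \Delta_{\boh} = \emptyset$. I want to show that this forces $\Delta_{\boh} \subseteq \Delta_V^f$, and then invoke Proposition \ref{proposition:twocasesparte1}\eqref{theoremitem:theoremclassification001} to conclude that $V$ is finite-dimensional, contradicting the hypothesis.

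First I would observe that $\inj V \cap \Delta_{\boh} = \emptyset$ means exactly that every even root is locally finite on $V$: for $\alpha \in \Delta_{\boh}$, by the dichotomy of Lemma \ref{lemma:actseverywhere02} together with Proposition \ref{proposition:equivalentnilpotent}, either $\alpha$ is locally finite or $x_\alpha \otimes 1$ acts injectively; the latter is excluded, so $\alpha$ is locally finite. Thus no positive multiple of any $\alpha \in \Delta_{\boh}$ lies in $\Z_+ \inj V$, and hence $\alpha \notin C_V$ for all $\alpha \in \Delta_{\boh}$. The key point to push through is that $-\alpha \notin C_V$ as well; since $-\alpha \in \Delta_{\boh}$ whenever $\alpha \in \Delta_{\boh}$ (the even root system is symmetric), the same argument applied to $-\alpha$ gives $-\alpha \notin C_V$. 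Therefore $\alpha \in \Delta_V^f$ for every $\alpha \in \Delta_{\boh}$, i.e. $\Delta_{\boh} \subseteq \Delta_V^f$.

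Now Proposition \ref{proposition:twocasesparte1}(1) applies directly: if $\Delta_{\boh} \subset \Delta_V^f$ then $V$ is finite-dimensional. This contradicts the assumption that $V$ is infinite-dimensional, so we must have $\inj V \cap \Delta_{\boh} \neq \emptyset$.

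The only place where a little care is needed — and what I'd flag as the main (though modest) obstacle — is the passage from ``$\alpha$ locally finite'' to ``$\alpha \notin C_V$'': one must check that $C_V$, being the saturation of $\Z_+ \inj V$ inside $\mathcal{Q}$, cannot contain a locally finite even root. This follows because if $m\alpha \in \Z_+ \inj V$ for some $m > 0$, then writing $m\alpha$ as a nonnegative integral combination of injective roots and using that products of injective operators remain injective (Lemma \ref{lemma:rootproperties00} and its proof), one gets that $V^{\lambda + n m \alpha} \ne 0$ for all $n \ge 0$, contradicting that $\alpha$ is locally finite via the equivalence in Proposition \ref{proposition:equivalentnilpotent}. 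Since $\inj V$ contains no odd roots automatically (odd roots with $2\alpha\notin\Delta$ are always locally finite by Lemma \ref{lemma:equivalentrootnilinv}, and when $2\alpha\in\Delta$ the root $\alpha$ being injective would force $2\alpha$ injective, but $2\alpha \in \Delta_{\boh}$ is excluded), the set $\inj V$ consists entirely of even roots, which makes the bookkeeping in $C_V \subseteq \mathcal{Q}$ transparent.
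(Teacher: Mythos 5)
Your argument is correct and is exactly the route the paper intends: the corollary is stated as an immediate consequence of Proposition \ref{proposition:twocasesparte1}(1), via the contrapositive that $\inj V \cap \Delta_{\boh} = \emptyset$ forces every even root to be locally finite, hence (since $\inj V$ then contains no roots at all and $C_V$ is trivial) $\Delta_{\boh} \subseteq \Delta_V^f$ and $V$ is finite-dimensional. Your extra care about locally finite roots not entering $C_V$ and about odd roots (Lemma \ref{lemma:equivalentrootnilinv}) matches the bookkeeping already recorded in the paper's Remark on the shadow, so there is nothing to add.
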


\begin{definition}[Triangular decomposition]\label{def:TD}
A \emph{triangular decomposition} $T$ of $\g$ is a decomposition $\g= \g_T^+ \oplus \g_T^0 \oplus \g_T^-$ such that there exists a linear map $l:\mathcal{Q} \rightarrow \Z$ for which $\g_T^+ = \bigoplus_{l(\alpha)>0} \g^{\alpha}$, $\g_T^0 = \bigoplus_{l(\alpha)=0} \g^{\alpha}$ and $\g_T^- = \bigoplus_{l(\alpha)<0}\g^{\alpha}$. A triangular decomposition $T$ of $\cG$ is a decomposition of the form $\cG = \cG_T^+\oplus \cG_T^0\oplus \cG_T^-$, where $\cG_T^\bullet :=\g_T^\bullet\otimes A$ and $T$ is a triangular decomposition of $\g$. A triangular decomposition is \emph{proper} if $\g_T^0 \neq  \g$. Finally, we set $\Delta_T^{+} := \{\alpha \in \Delta \mid l(\alpha)>0 \}$, $\Delta_T^{^-} := \{\alpha \in \Delta \mid l(\alpha)<0 \}$ and $\Delta_T^{0} := \{\alpha \in \Delta \mid l(\alpha)=0\}$.
\end{definition}

\begin{lemma}\label{lemma:triangdecompindeltai}
    Let $V$ be a simple finite weight $\mathcal{G}$-module.
    \begin{enumerate}
        \item\label{lemmaitem:triangdecompindeltai01} The monoid $\mathcal{Q}^i_V$ generated by all even roots in $\Delta^i_V$ is a group, and for every odd root $\alpha \in \Delta^i_V$ there is $m > 0$ such that $m \alpha \in \mathcal{Q}^i_V$.
        \item\label{lemmaitem:triangdecompindeltai02} There is a triangular decomposition $T$ of $\cG$ such that $\Delta_V^i = \Delta_T^0$, $\Delta_V^+ \subset \Delta_T^+$, and $\Delta_V^- \subset \Delta_T^-$.
    \end{enumerate}
\end{lemma}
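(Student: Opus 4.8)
The plan is to treat the two parts separately; part~(1) is largely formal once one records which roots can lie in $\inj V$, and part~(2) is a separating–functional argument in $E:=\mathcal{Q}\otimes_{\Z}\R$.

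For the first assertion of (1), note that $\Delta_V^i=\{\alpha\in\Delta\mid\pm\alpha\in C_V\}$ is stable under $\alpha\mapsto-\alpha$, so the even roots it contains form a symmetric set; the submonoid of $\mathcal{Q}$ they generate therefore contains the negative of each generator, hence is a group. For the second assertion, let $\alpha\in\Delta_V^i$ be odd. Since $\pm\alpha\in C_V$ there is $k>0$ with $k\alpha,\,-k\alpha\in\Z_+\inj V$. By Lemma~\ref{lemma:equivalentrootnilinv}(2) a root $\gamma\notin\Delta_{\boh}$ with $2\gamma\notin\Delta$ acts locally nilpotently everywhere on $V$, hence is not in $\inj V$; so every $\gamma\in\inj V$ is either even or odd with $2\gamma\in\Delta$, and in the latter case $2\gamma$ is an even root lying in $\Z_+\inj V\subseteq C_V$. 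Expanding $k\alpha$ and $-k\alpha$ in $\Z_+\inj V$ and multiplying by $2$ so as to replace each odd $\gamma$ by $2\gamma$, we obtain
\[
2k\alpha=\sum_s a_s\eta_s,\qquad -2k\alpha=\sum_t b_t\theta_t,
\]
with $a_s,b_t>0$ and each $\eta_s,\theta_t$ an even root lying in $C_V$. Adding these relations gives $\sum_s a_s\eta_s+\sum_t b_t\theta_t=0$, so each $-\eta_s$ is a non‑negative rational combination of elements of $C_V$; as $C_V$ is a saturated submonoid of $\mathcal{Q}$ it is closed under such combinations whenever the outcome lies in $\mathcal{Q}$, so $-\eta_s\in C_V$. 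Hence $\eta_s\in\Delta\cap C_V\cap(-C_V)=\Delta_V^i$ and is even, so $2k\alpha=\sum_s a_s\eta_s\in\mathcal{Q}_V^i$; one may take $m=2k$.

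For part~(2), let $C\subseteq E$ be the convex cone $\R_{\geq 0}\inj V$; since $\inj V$ is finite this cone is rational polyhedral, $C\cap\mathcal{Q}=C_V$, its lineality space $L:=C\cap(-C)$ is a rational subspace, $\Delta\cap L=\Delta_V^i$, and $\pm\alpha\notin C$ for every $\alpha\in\Delta_V^f$. The functionals $l\in L^{\perp}$ that are strictly negative on $C\setminus L$ form a non‑empty open subcone $\mathcal{U}$ of $L^{\perp}$ (non‑empty because $C/L$ is a pointed cone, so its dual cone is full–dimensional in $(E/L)^{*}\cong L^{\perp}$). Since each $\alpha\in\Delta_V^f$ lies off $L$, the relation $l(\alpha)=0$ cuts out a proper subspace of $L^{\perp}$, so $\mathcal{U}$ is not covered by the finitely many hyperplanes $\{\,l(\alpha)=0\,\}_{\alpha\in\Delta_V^f}$; pick $l\in\mathcal{U}$ avoiding all of them, rational by density, and rescale so that $l(\mathcal{Q})\subseteq\Z$. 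By construction $l$ vanishes on $\Delta_V^i\subseteq L$ and is nonzero on every other root, is strictly negative on $\Delta_V^-\subseteq C\setminus L$, and is strictly positive on $\Delta_V^+$ because $-\Delta_V^+\subseteq C\setminus L$. Thus the triangular decomposition $T$ of $\g$ determined by $l$ (and the induced decomposition of $\cG$, with $\cG_T^{\bullet}=\g_T^{\bullet}\otimes A$) satisfies $\Delta_T^0=\Delta_V^i$, $\Delta_V^+\subseteq\Delta_T^+$ and $\Delta_V^-\subseteq\Delta_T^-$.

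I expect the main obstacle to be the book‑keeping in part~(1): one must observe that $\inj V$ contains no odd root $\gamma$ with $2\gamma\notin\Delta$ — this is exactly Lemma~\ref{lemma:equivalentrootnilinv}(2), and it is what would fail for an arbitrary symmetric closed subset of $\Delta$ — and one must use saturation of $C_V$ to upgrade $\eta_s\in C_V$ to $\pm\eta_s\in C_V$. In part~(2) the only delicate point is forcing $l$ to vanish exactly on $\Delta_V^i$, i.e.\ controlling the roots of $\Delta_V^f$, which is handled by the genericity argument since those roots avoid the lineality space $L$.
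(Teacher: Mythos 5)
Your proof is correct and follows the same route the paper takes: the paper's proof of this lemma is just the observation that the argument of \cite[Theorem 3.6]{DMP00} goes through, and your write-up is a faithful reconstruction of that convexity argument (saturated cone $C_V$, lineality space, generic rational functional in the interior of the dual cone), with the genuinely super-specific point handled correctly — namely that $\inj V$ contains no odd root $\gamma$ with $2\gamma\notin\Delta$ by Lemma~\ref{lemma:equivalentrootnilinv}, so after doubling, every relation in $\Z_+\inj V$ can be rewritten over even roots of $C_V$, and saturation then forces those roots into $\Delta_V^i$.
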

\begin{proof}
Since the root system of $\mathcal{G}$ coincides with that of $\g$ and the set $\Delta^i_V$ is completed determined by the action of $\g$ on $V$ (see Proposition \ref{proposition:equivalentnilpotent} and Lemma \ref{lemma:actseverywhere02}), the same proof of \cite[Theorem 3.6]{DMP00} also work in our setting.
 \end{proof}


Let $T$ be a triangular decomposition of $\cG$. For any weight $\mathcal{G}_T^0$-module $W$, we define the induced module $$M_T(W) = U \left ( \mathcal{G} \right ) \otimes_{U\left (\mathcal{G}^0_T \oplus \mathcal{G}_T^+ \right )} W,$$ where the action of $\mathcal{G}_T^+$ on $W$ is given by $\mathcal{G}_T^+ W=0$.
\begin{proposition}\label{propositionn:generalizedvermatypemodule}
Let $W$ be a weight $\mathcal{G}_T^0 = \g_T^0 \otimes A$-module whose support is included in a single $\mathcal{Q}^T$-coset, where $\mathcal{Q}^T$ is the root lattice of $\g_T^0$.
    \begin{enumerate} 
        \item\label{propitem:generalizedvermatypemodule01} $M_{T}(W)$ has a unique submodule $N_T(W)$ which is maximal among all submodules of $M_{T}(W)$ with trivial intersection with $W$. 
        \item\label{propitem:generalizedvermatypemodule02} $N_{T}(W)$ is maximal if and only if $W$ is simple. In particular, $L_{T}(W) = M_{T}(W) / N_{T}(W)$ is a simple $\mathcal{G}$-module if and only if $W$ is a simple $\mathcal{G}_T^0$-module.  
     \item If $W$ is simple, the space
     $$L_{T}(W)^{\mathcal{G}_T^+} = \left \{v \in L_{T}(W) \mid xv=0 \text{ for all }x \in \mathcal{G}_T^+\right \} $$
    of $\mathcal{G}_T^+$-invariants is equal to $W$.
    \end{enumerate}
\end{proposition}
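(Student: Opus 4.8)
The plan is to treat the three items in order, since each builds on the previous one. For item \eqref{propitem:generalizedvermatypemodule01}, I would first observe that $M_T(W)$ is a weight $\cG$-module which, by the PBW theorem (the super version stated above) and the assumption that $\Supp W$ lies in a single $\mathcal{Q}^T$-coset, decomposes as $M_T(W) = W \oplus M_T(W)'$, where $M_T(W)' = U(\cG_T^-)_{<0}\, W$ is the sum of the weight spaces lying strictly below the coset of $\Supp W$ in the ordering coming from the linear functional $l$ defining $T$. Concretely, $W$ is exactly the top piece: $M_T(W)^{\mu} \subseteq W$ whenever $\mu$ is in the coset of $\Supp W$, because lowering operators strictly decrease the $l$-value. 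Hence the collection of submodules $N$ with $N \cap W = 0$ is the collection of submodules contained in $M_T(W)'$, and this collection is closed under sums (a sum of submodules each meeting $W$ trivially still meets $W$ trivially, since $W$ is the full top graded piece). Therefore $N_T(W) := \sum_{N \cap W = 0} N$ is the unique maximal such submodule. This is the routine part.

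For item \eqref{propitem:generalizedvermatypemodule02}, I would argue both directions. If $W$ is not simple, pick a proper nonzero submodule $W' \subsetneq W$ over $\cG_T^0$; then $M_T(W')$ maps into $M_T(W)$ and its image is a submodule whose intersection with $W$ is $W' \ne 0$ and $\ne W$, so $N_T(W)$ together with (the image of) $M_T(W')$ generates a strictly larger submodule that is still proper — hence $N_T(W)$ is not maximal. Conversely, suppose $W$ is simple and let $N \supsetneq N_T(W)$ be a submodule; by maximality of $N_T(W)$ among submodules trivially intersecting $W$, we must have $N \cap W \ne 0$, and $N \cap W$ is a nonzero $\cG_T^0$-submodule of the simple module $W$, so $N \cap W = W$, i.e. $W \subseteq N$. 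But $W$ generates $M_T(W)$ as a $\cG$-module (indeed as a $U(\cG_T^-)$-module), so $N = M_T(W)$; thus $N_T(W)$ is maximal and $L_T(W) = M_T(W)/N_T(W)$ is simple. The converse statement "$L_T(W)$ simple $\Rightarrow$ $W$ simple" follows from the contrapositive just given. I expect the main subtlety here to be checking that the image of $M_T(W')$ in $M_T(W)$ really does intersect $W$ in exactly $W'$ and not something bigger; this uses again that $W$ is the top graded component and that the natural map $M_T(W') \to M_T(W)$ is injective on the top piece.

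For item (3), the plan is: it is clear that $W \subseteq L_T(W)^{\cG_T^+}$, because $\cG_T^+$ annihilates $W$ in $M_T(W)$ by construction and $W$ injects into $L_T(W)$ (as $N_T(W) \cap W = 0$). For the reverse inclusion, suppose $\bar v \in L_T(W)^{\cG_T^+}$ is a nonzero weight vector of weight $\mu$; I would show $\mu$ lies in the coset of $\Supp W$ and hence $\bar v \in W$. Here I would use that, by simplicity of $L_T(W)$ (from item 2), $U(\cG)\bar v = L_T(W)$, together with the triangular decomposition $U(\cG) = U(\cG_T^-)U(\cG_T^0)U(\cG_T^+)$: since $\cG_T^+ \bar v = 0$ and $\cG_T^0$ preserves the coset of $\mu$, we get $L_T(W) = U(\cG_T^-)U(\cG_T^0)\bar v$, so every weight of $L_T(W)$ is $\leq \mu$ in the $l$-ordering; but every weight of $L_T(W)$ is also $\leq$ the top coset of $W$, and $W$-weights are achieved, forcing $\mu$ to be in that top coset, whence $\bar v \in W$. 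The step I expect to be the genuine obstacle is making the "top graded component" arguments fully rigorous in the super setting — in particular ensuring that the exterior-algebra factor $\bigwedge(\g_{T,\bon}^- \otimes A)$ from the super PBW theorem does not produce extra weight vectors in the top coset, and that the $\mathcal{Q}^T$-coset condition on $\Supp W$ is exactly what prevents $W$ from wrapping around and meeting lower pieces; once that bookkeeping is set up, everything else is formal.
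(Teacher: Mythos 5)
Your argument is correct, and it is exactly the standard highest-coset/grading argument that the paper itself does not spell out but delegates to \cite[Lemma 2.3, Corollary 2.4]{DMP00}: decompose $M_T(W)$ by the $l$-level relative to the coset of $\Supp W$, identify $W$ with the level-zero part (all roots of $\cG_T^-$, even and odd, have strictly negative $l$-value, so the coset hypothesis does the bookkeeping you were worried about), and deduce (1)--(3) from the fact that every weight submodule meets the level-zero part inside $W$. No gaps beyond routine details (e.g.\ noting that $L_T(W)^{\cG_T^+}$ is $\h$-stable, so it suffices to treat weight vectors).
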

\begin{proof}
This proof is standard (see \cite[Lemma 2.3, Corollary 2.4]{DMP00}).
\end{proof}
\begin{theorem}\label{theorem:moduleisinduced}
    Let $V$ be a simple finite weight $\mathcal{G}$-module, then there is a triangular decomposition $\Delta = \Delta_T^- \cup \Delta_T^0 \cup \Delta_T^+$ such that $\Delta_V^i = \Delta_T^0$, $\Delta_V^+ \subset \Delta_T^+$, $\Delta_V^- \subset \Delta_T^-$, the vector space of $\mathcal{G}^+_T$-invariants $V^{\mathcal{G}^+_T}$ is a simple bounded $\mathcal{G}_V^i$-module, and $V \cong L_{T}(V^{\mathcal{G}^+_T})$.
\end{theorem}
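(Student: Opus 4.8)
The plan is to build the required triangular decomposition from the shadow of $V$, to show that the space of invariants $W=V^{\cG_T^+}$ is a nonzero simple bounded $\cG_V^i$-module, and finally to identify $V$ with $L_T(W)$ via Proposition~\ref{propositionn:generalizedvermatypemodule}; this is the super/map analogue of the Dimitrov--Mathieu--Penkov argument. First I would invoke Lemma~\ref{lemma:triangdecompindeltai}~(2) to fix a triangular decomposition $T$ of $\cG$ with $\Delta_V^i=\Delta_T^0$, $\Delta_V^+\subseteq\Delta_T^+$ and $\Delta_V^-\subseteq\Delta_T^-$; then automatically $\g_T^0=\g_V^i$ and $\cG_T^0=\cG_V^i$, so the first assertions of the theorem hold. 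Since $\Delta_T^+$ is disjoint from $\Delta_T^0=\Delta_V^i$ and from $\Delta_T^-\supseteq\Delta_V^-$, we get $\Delta_T^+\subseteq\Delta_V^f\cup\Delta_V^+$, so by Lemma~\ref{lemma:dasalphastring} the $\alpha$-string through every weight of $V$ is bounded above for each $\alpha\in\Delta_T^+$. Consequently $x_\alpha\otimes a$ acts locally nilpotently on $V$ for all $\alpha\in\Delta_T^+$ and $a\in A$: this is Proposition~\ref{proposition:equivalentnilpotent} if $\alpha$ is even or odd with $2\alpha\in\Delta$, and Lemma~\ref{lemma:equivalentrootnilinv}~(2) if $\alpha$ is odd with $2\alpha\notin\Delta$.

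\emph{Nonzero invariants, lying in one coset.} Next I would prove $W:=V^{\cG_T^+}\neq 0$. By Proposition~\ref{proposition:idealcofinite}, $\bar A:=A/\Ann_A(V)$ is finite-dimensional, so $\cG_T^+$ acts on $V$ through the finite-dimensional \emph{nilpotent} Lie superalgebra $\g_T^+\otimes\bar A$, which is generated by the images of the $x_\alpha\otimes a$ ($\alpha\in\Delta_T^+$); these act locally nilpotently on $V$ by the previous paragraph and ad-nilpotently on $\cG$. A standard Engel-type argument (cf.\ \cite{DMP00,Fer90}) then shows that $\cG_T^+$ acts locally nilpotently on $V$ in the strong sense that every vector is killed by all sufficiently long products of elements of $\cG_T^+$; hence $W\neq 0$ (pick $0\ne v$ and $k\ge 1$ minimal with $(\cG_T^+)^k v=0$, and take a nonzero vector in $(\cG_T^+)^{k-1}v$). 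Since $[\cG_T^0,\cG_T^+]\subseteq\cG_T^+$, the subspace $W$ is a $\cG_T^0=\cG_V^i$-submodule of $V$. Moreover $\Supp W$ lies in a single $\mathcal{Q}^T$-coset: for any nonzero weight vector $w\in W$, simplicity gives $V=U(\cG)w=U(\cG_T^-)U(\cG_T^0)w$ (using $\cG_T^+w=0$), so every weight of $V$ has $l$-value at most $l(\mathrm{wt}\,w)$; applying this to two weight vectors of $W$ and using $l|_{\Delta_T^0}=0$ forces their weights to differ by an element of $\mathcal{Q}^T=\Z\Delta_T^0$, and all weights of $W$ share one $l$-value, say $c$.

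\emph{Simplicity and boundedness of $W$.} For simplicity, let $0\ne W'\subseteq W$ be a $\cG_T^0$-submodule; then $U(\cG)W'=V$, and since $\cG_T^+W'=0$ and $\cG_T^0W'\subseteq W'$, PBW yields $U(\cG)W'=U(\cG_T^-)W'=W'+\cG_T^-U(\cG_T^-)W'$, with the second summand supported in $l$-values $<c$. Hence the sum of the weight spaces $V^\mu$ with $l(\mu)=c$ coincides with $W'$; but $W$ is contained in that sum, so $W\subseteq W'$ and $W$ is simple. For boundedness, each even root $\alpha$ of $\g_V^i$ lies in $\Delta_V^i$, so $x_\alpha\otimes 1$ is not locally nilpotent on $V$ (Lemma~\ref{lemma:dasalphastring}, Proposition~\ref{proposition:equivalentnilpotent}) and therefore acts injectively everywhere on $V$ by Lemma~\ref{lemma:actseverywhere02}, in particular injectively on $W$; thus all even roots of $\g_V^i$ lie in $\inj W$, and Proposition~\ref{proposition:twocasesparte1}~(2), whose proof applies verbatim to the regular subalgebra $\g_V^i$, shows that $W$ is bounded.

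\emph{Identification and the main difficulty.} Finally, $W$ is a simple $\cG_T^0$-module supported in one $\mathcal{Q}^T$-coset, so Proposition~\ref{propositionn:generalizedvermatypemodule} applies. By Frobenius reciprocity the inclusion $W\hookrightarrow V$, with $\cG_T^+$ acting by $0$ on $W$, extends to a $\cG$-homomorphism $\varphi\colon M_T(W)\to V$; it is surjective since $\im\varphi$ is a nonzero submodule of the simple module $V$, and $\ker\varphi\cap W=0$, so $\ker\varphi\subseteq N_T(W)$ by Proposition~\ref{propositionn:generalizedvermatypemodule}~(1). As $W$ is simple, $N_T(W)$ is maximal and $L_T(W)$ is simple by Proposition~\ref{propositionn:generalizedvermatypemodule}~(2), so the induced surjection $V\cong M_T(W)/\ker\varphi\twoheadrightarrow L_T(W)$ is an isomorphism; and $V^{\cG_T^+}=W$ by construction, consistently with Proposition~\ref{propositionn:generalizedvermatypemodule}~(3). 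I expect the real obstacle to be the step of upgrading local nilpotence of the individual $x_\alpha\otimes a$ to local nilpotence of the whole (infinite-dimensional, but nilpotent) subalgebra $\cG_T^+$ so that $V^{\cG_T^+}\neq 0$: this is precisely where one must use ad-nilpotence on $\cG$ together with the finite-dimensional reduction of Proposition~\ref{proposition:idealcofinite}, after which the weight/$l$-degree bookkeeping above and the induction argument are routine.
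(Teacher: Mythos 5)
Your proposal is correct in substance and follows the same skeleton as the paper: Lemma~\ref{lemma:triangdecompindeltai} to produce $T$, local finiteness of all $\alpha\in\Delta_T^+$, nonvanishing of the invariants, and Proposition~\ref{propositionn:generalizedvermatypemodule} to get $V\cong L_T(W)$. The differences are local. (i) You work directly with $W=V^{\cG_T^+}$ and prove its simplicity and single-coset support by hand via the $l$-grading; the paper instead takes the cyclic $\cG_V^i$-module $U(\cG_V^i)w$ generated by one invariant weight vector (whose support automatically lies in a single $\mathcal{Q}^T$-coset), identifies $V\cong L_T(W)$ first, and then reads off simplicity of $W$ and $W=V^{\cG_T^+}$ from parts (2) and (3) of Proposition~\ref{propositionn:generalizedvermatypemodule}. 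Your route works, but one step is stated too loosely: equality of $l$-values of two weights of $W$ does not by itself force their difference to lie in $\mathcal{Q}^T=\Z\Delta_T^0$, since $\ker l\cap\mathcal{Q}$ can be strictly larger than $\Z\Delta_T^0$; what you need, and what your own identity $V=U(\cG_T^-)U(\cG_T^0)w$ already gives, is that every weight of $V$ of maximal $l$-value arises with no contribution from $\cG_T^-$ and hence lies in $\mathrm{wt}(w)+\mathcal{Q}^T$. With that one-line repair your coset and simplicity arguments are fine. (ii) For boundedness the paper does not rerun Proposition~\ref{proposition:twocasesparte1}(2) on $\g_V^i$: it decomposes $\g_V^i\cong\mathfrak{z}\oplus\bigoplus_r\mathfrak{l}^r$, applies the tensor product theorem (Theorem~\ref{proposition:simpleofsumistensor}) to write $W$ as a tensor product of simple finite weight modules over the $\mathfrak{l}^r\otimes A$, and applies Proposition~\ref{proposition:twocasesparte1}(2) to each factor; your shortcut is legitimate, since that proof only uses PBW, simplicity, finite-dimensionality of $A/\Ann_A$, and injectivity of the even root vectors, but you should say this rather than cite the proposition, whose hypothesis ($\g$ basic classical simple) $\g_V^i$ does not satisfy. (iii) The step you flag as the main obstacle, $V^{\cG_T^+}\neq 0$, is settled in the paper exactly with the ingredients you name: finite-dimensionality of $A/\Ann_A(V)$ together with the string argument of Proposition~\ref{proposition:twocasesparte1}(1) show that $U(\cG_T^+)v$ has finitely many weights and is finite-dimensional, and then a weight vector in it of maximal $l$-value is automatically $\cG_T^+$-invariant; no Engel-type theorem is required, and your ``sufficiently long products annihilate'' statement is a consequence of this finiteness rather than an independent input.
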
       
     \begin{proof}
    By Lemma \ref{lemma:triangdecompindeltai}, there is a triangular decomposition $\Delta = \Delta^-_T \cup \Delta^0_T \cup \Delta^+_T$ such that $\Delta_V^i = \Delta^0_T$, $\Delta_V^+ \subset \Delta^+_T$, and $\Delta_V^- \subset \Delta^-_T$, i.e. $\mathcal{G}_T^0 = \mathcal{G}_V^i$, and $\mathcal{G}_V^{\pm} \subset \mathcal{G}_T^{\pm}$. Since $[\mathcal{G}_V^i,\mathcal{G}_T^+] \subset \mathcal{G}_T^+$, $V^{\mathcal{G}_T^+}$ is a submodule of $V$ over $\mathcal{G}_V^i$. First, we will prove that $V^{\mathcal{G}_T^+}$ is a nonzero $\mathcal{G}_V^i$-module, and we will use it to create a $\mathcal{G}_V^i$-module $W$ such that its support is included in a single $\mathcal{Q}^T$-coset. Then, we will prove that $V \cong L_{T}(W)$, $W$ is simple, and $W=V^{\mathcal{G}_T^+}$. Lastly, we will show that $W$ is bounded.
		
	Let $v \in V$ be a nonzero weight vector of weight $\lambda$. Using the fact all roots $ \Delta_V^+$ are locally finite, and the argument given Proposition \ref{proposition:twocasesparte1} \eqref{theoremitem:theoremclassification001}, $U(\mathcal{G}_T^+)v$ is a finite dimensional $\mathcal{G}_T^+$-module. Therefore, $U(\mathcal{G}_T^+)v \cap V^{\mathcal{G}_T^+} \neq  0$, and $V^{\mathcal{G}_T^+}$ is a nonzero $\mathcal{G}_V^i$-module. Let $w \in V^{\mathcal{G}_T^+}$ any weight nonzero vector and let $W=U \left ( \mathcal{G}_V^i \right )w$. Thus, $W$ is a nonzero weight $\mathcal{G}_V^i$-module whose support is included in a single $\mathcal{Q}^T$-coset.

	The linear map 
		\begin{align*}
		    \varphi:M_{T} \left (W \right) &\rightarrow V \\
		    u \otimes_{U\left (\mathcal{G}^i_V \oplus \mathcal{G}_T^+ \right )} v & \mapsto uv
		\end{align*}
	    is a well-defined $\mathcal{G}$-module homomorphism. Furthermore, $\varphi$ is surjective, because its image contains $W$ and $V$ is a simple $\mathcal{G}$-module. Hence $V$ is isomorphic to $M_{T}(W) / \textup{ker} \, \varphi $, and $\textup{ker} \, \varphi$ is a maximal submodule of $M_{T}(W)$. By the PBW theorem, $M_{T}\left (W \right)$ can be identified with $U(\mathcal{G}_T^-) \mathcal{G}_T^- \otimes W \oplus  1 \otimes W$ thus $\varphi(1 \otimes v) =v $ for all $v \in W$, and $\varphi$ restricted to $W$ is a $\mathcal{G}_V^i$-module homomorphism and an injection that preservers weight spaces. Therefore, $W\cap \textup{ker} \, \varphi = 0$, and $\textup{ker} \, \varphi \subset N_T(W)$. By Proposition \ref{propositionn:generalizedvermatypemodule} \eqref{propitem:generalizedvermatypemodule01}, $\textup{ker} \, \varphi =N_{T}\left (W \right)$, because $\textup{ker} \, \varphi$ is maximal. We conclude that $V \cong L_{T}\left (W\right)$. By Proposition \ref{propositionn:generalizedvermatypemodule}, $W$ is simple and it is equal to $V^{\mathcal{G}^+_T}$.

	    Since $\mathfrak{g}^i_V$ is a good Levi subalgebra, we have a isomorphism $$\g^i_V \cong \mathfrak{z} \oplus \mathfrak{l},$$
	    where $\mathfrak{z}$ is a subalgebra of the Cartan subalgebra $\h$ of $\g$, and $\mathfrak{l} \cong \bigoplus_{r=1}^k \mathfrak{l}^r$ is a direct sum of certain simple finite-dimensional Lie superalgebras where at most one $\mathfrak{l}^i$ has nontrivial odd part (see \cite{DMP00, DMP04} for a complete list of good Levi subalgebras). Since $\mathfrak{z}$ is a subalgebra of $\h$, $W$ is simple as a $\g^i_V \otimes A$-module if and only if $W$ is simple as a $\mathfrak{l} \otimes A$-module.
	    Since at most one $\mathfrak{l}^i$ has nontrivial odd part, Theorem \ref{proposition:simpleofsumistensor} implies that $$W|_{\mathfrak{l} \otimes A} \cong  \bigotimes_{r=1}^k W_r ,$$ where $W_r$ is a simple finite weight module over $\mathfrak{l}^r\otimes A$ for each $r=1,\dots,k$. By definition, all even roots on $\Delta^i_V$ are injective. By Proposition \ref{proposition:twocasesparte1}, each $W_r$ is a bounded $\mathfrak{l}^r\otimes A$-module. Therefore, $W$ is bounded $\g_V^i \otimes A$-module, due to the orthogonality between $\mathfrak{l}^r$ and $\mathfrak{l}^s$.
\end{proof}

\begin{definition}[Cuspidal modules]
Let $V$ be a simple weight $\cG$-module. When there is a proper triangular decomposition $T$ of $\cG$ and a simple $\cG_T^0$-module $W$ such that $V \cong L_{T}(W)$, we say $V$ is parabolically induced. If $V$ is not parabolically induced, we call $V$ \emph{cuspidal}. If $\cG$ admits cuspidal modules we say  $\cG$ is a cuspidal superalgebra.
\end{definition}

By Theorem \ref{theorem:moduleisinduced}, every finite weight $\mathcal{G}$-module is either cuspidal, or it is isomorphic to some parabolically induced module of the form $L_T(W)$ with $W$ being a bounded $\mathcal{G}_T^0$-module. However, it is not the case that every parabolically induced module constructed in this way is a finite weight $\mathcal{G}$-module, as can be seen in the following example. 

\begin{example}
Let $\g$ be a basic classical Lie superalgebra and $A = \kc [t]$. Any usual triangular decomposition $\Delta = \Delta^+\cup \Delta^-$ defines a triangular decomposition $T$ of $\Delta$ in the sense of Definition~\ref{def:TD}. In particular, we have $\mathcal{G}_T^0=  \h \otimes \kc[t]$. Fix a nonzero element $h\in \h$, and let $\Lambda: \h \otimes \kc[t]\rightarrow \kc$ be a linear functional satisfying $\Lambda(h \otimes t^k )= \frac{1}{k+1}$ for all $k \geq 0$. The 1-dimensional $\mathcal{G}^0_T$-module $\kc v_{\Lambda}$ defined by $xv_{\Lambda} = \Lambda(x)v_{\Lambda}$ is a simple $\mathcal{G}_T^0$-module with finite-dimensional weight-spaces, but the simple highest weight $\mathcal{G}$-module $L_T(\kc v_{\Lambda})$ does not have finite-dimensional weight spaces by \cite[Theorem 4.16]{Sav14}. 
\end{example}

\begin{remark}\label{remark:giscuspidalevi}
	If $\Delta_V^i \neq \emptyset$, then the $\g_V^i \otimes A$-module $V^{\mathcal{G}_T^+}$ is cuspidal because all roots on $\Delta_{V}^i$ acts injectively on $V$, consequently on $V^{\mathcal{G}_T^+}$ as well. By Proposition \ref{theorem:moduleisinduced}, we have that $V^{\mathcal{G}_T^+}$ is bounded, and hence it is bounded when restricted to $\g_V^i$. Thus, it has finite length over $\g^i_V$ by \cite[Lemma 6.2]{DMP00}. Now, if $M$ is a simple $\g_V^i$-submodule of $V^{\mathcal{G}_T^+}$, then $M$ is cuspidal.  In particular, $\g_V^i \cong \mathfrak{z} \oplus \mathfrak{l}$ is a cuspidal Levi superalgebra, with $\mathfrak{z} \subset \h$ and $\mathfrak{l}$ isomorphic to $\mathfrak{osp}(1 | 2)$, $\mathfrak{osp}(1|2) \oplus \mathfrak{sl}_2$, $\mathfrak{osp}(n|2n)$ with $2 < n \leq 6$, $D(2,1;a)$, or a reductive Lie algebra with irreducible components of type $A$ and $C$ \cite{RF09}. Note that at most one of the simple components of $\mathfrak{l}$ is a Lie superalgebra.
	With Theorem \ref{proposition:simpleofsumistensor} and \cite{BLL15,Lau18} in mind, we conclude that it remains to classify bounded cuspidal modules over the map superalgebra associated to these basic Lie superalgebras, since every simple module over $(\g_V^i \oplus \mathfrak{l}) \otimes A \cong (\mathfrak{z} \otimes A) \oplus (\mathfrak{l} \otimes A) $ is a tensor product of simple representations over the map superalgebras associated to the simple components of $\mathfrak{l}$.
\end{remark}

\begin{remark}
	The notion of cuspidal Levi superalgebras considered in \cite{DMP00,DMP04} is more general than the one we are using in the current paper, since their notion of parabolic induced modules is more general  (in \cite{DMP00,DMP04}, parabolic induction is performed over \emph{generalized} weight modules). The notion of cuspidal modules considered here is aligned with that of \cite{RF09}, and therefore the list of cuspidal Levi superalgebras presented in \cite{RF09} is precisely the one we must consider. However, we point out that the argument given in Remark \ref{remark:giscuspidalevi} and the results proven in Section \ref{sectioon:cuspidaluniformboundmodules} are also valid for the list of cuspidal Levi superalgebras from \cite{DMP00,DMP04}, since for all such superalgebras  at most one of its simple components is a Lie superalgebra with semisimple even part. 
\end{remark}

    \section{Cuspidal Bounded Modules}
    \label{sectioon:cuspidaluniformboundmodules}

In this section we complete the classification given in Theorem \ref{theorem:moduleisinduced}. 
By Remark \ref{remark:giscuspidalevi}, we just need to classify all cuspidal bounded simple modules over $\g \otimes A$, where $\g$ is isomorphic to $\mathfrak{osp}(1 | 2)$, $\mathfrak{osp}(1|2) \oplus \mathfrak{sl}_2$, $\mathfrak{osp}(n|2n)$ with $2 < n \leq 6$, $D(2,1;a)$, or to a reductive Lie algebra with irreducible components of type $A$ and $C$. Since the case where $\g$ is isomorphic to a Lie algebra is known \cite{BLL15}, it only remains to classify all cuspidal bounded simple modules over $\g \otimes A$, for $\g$ isomorphic to $\mathfrak{osp}(1|2)$, $\mathfrak{osp}(n|2n)$ with $2 < n \leq 6$, or $D(2,1;a)$.

    \begin{proposition}\label{proposition:kernelisradicaluniform}
        Suppose that $\g$ is a basic classical Lie superalgebra such that $\g_{\boh}$ is semisimple. If $V$ is a simple bounded weight $\g \otimes A$-module, then the ideal $\Ann_A(V)$ is radical.
    \end{proposition}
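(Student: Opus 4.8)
The goal is to show that if $\mathfrak{a} := \Ann_A(V)$ then $A/\mathfrak{a}$ is reduced, i.e. $\mathfrak{a}$ equals its own radical $\sqrt{\mathfrak{a}}$. Since $V$ is bounded, Proposition~\ref{proposition:idealcofinite} (which applies because bounded modules are finite weight) tells us $A/\mathfrak{a}$ is finite-dimensional; hence $B := A/\mathfrak{a}$ is an Artinian algebra and $\sqrt{\mathfrak{a}}/\mathfrak{a}$ is the nilradical $\operatorname{nil}(B)$, which is itself a nilpotent ideal. So it suffices to prove that the nilradical of $B$ acts by zero on $V$; equivalently, that $\mathfrak{g}\otimes \sqrt{\mathfrak{a}}$ annihilates $V$.

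**Key steps.** First I would reduce to working with the action of $\mathfrak{g}_{\boh}\otimes B$. Let $N = \operatorname{nil}(B)$, pick $0\neq n \in N$ with $n^2 = 0$ (possible since $N$ is nilpotent; one climbs down the filtration $N \supset N^2 \supset \cdots$ and picks an element in the last nonzero power). The plan is to show $(\mathfrak{g}\otimes n)V = 0$ for every such $n$, and then induct on the nilpotency degree of $N$ to conclude $(\mathfrak{g}\otimes N)V=0$. To handle a single $n$ with $n^2=0$: for any even root $\alpha \in \Delta_{\boh}$, consider $x := x_\alpha \otimes n$. Then $x^2 = \tfrac12[x,x]$ lies in $\mathfrak{g}_{\boh}\otimes n^2 = 0$ in the even case, while $[x_\alpha\otimes n,\ x_{-\alpha}\otimes n] = h_\alpha\otimes n^2 = 0$ as well, so the various $x_{\pm\alpha}\otimes n$ span an abelian subalgebra acting by elements squaring to zero. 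Since $\mathfrak{g}_{\boh}$ is semisimple, it is generated by the $x_{\pm\alpha}$, $\alpha\in\Delta_{\boh}$; the crucial point is that each $x_\alpha\otimes n$ acts locally nilpotently (indeed with square zero on all of $V$ after passing to suitable powers), so by Lemma~\ref{lemma:actseverywhere02} (applied to the simple module $V$) each $x_\alpha\otimes n$ acts either injectively everywhere or locally nilpotently everywhere — and since $(x_\alpha\otimes n)^2=0$ forces non-injectivity, it must act with $(x_\alpha\otimes n)^2 = 0$ identically, hence kills the weight-string structure. One then argues that the subspace $\{v\in V : (\mathfrak{g}_{\boh}\otimes n)v = 0\}$ is stable under $\mathfrak{g}_{\boh}\otimes A$ (it is cut out by commuting nilpotent operators whose bracket with $\mathfrak{g}_{\boh}\otimes A$ stays in $\mathfrak{g}_{\boh}\otimes An$) and nonzero, hence all of $V$ by semisimplicity and the finite-dimensionality of the relevant $\mathfrak{sl}_2$-type subrepresentations.

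**The main obstacle.** The delicate part is upgrading "$x_\alpha\otimes n$ acts locally nilpotently" to "$(\mathfrak{g}_{\boh}\otimes n)V = 0$" — i.e. getting from individual root vectors to the whole ideal, and controlling the odd part. For the even part one can exploit that $\mathfrak{g}_{\boh}$ semisimple means $\mathfrak{g}_{\boh}\otimes B$ is (as a Lie algebra) a map algebra over a semisimple base, and the operators $x_\alpha\otimes n$ generate, together with $h_\alpha\otimes n$, a collection of $\mathfrak{sl}_2$-triples tensored with a square-zero element; a careful $\mathfrak{sl}_2$-string argument (using boundedness of $V$ to bound the strings, as in Proposition~\ref{proposition:twocasesparte1}) shows these must act nilpotently and in fact trivially once $n^2 = 0$. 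The odd part $\mathfrak{g}_{\bon}\otimes n$ is then forced to act by zero via the relation $2x_\beta^2 v = [x_\beta,x_\beta]v \in (\mathfrak{g}_{\boh}\otimes n^2)v = 0$ from Lemma~\ref{lemma:equivalentrootnilinv} for $\beta\in\Delta_{\bon}$, combined with the fact that $\mathfrak{g} = [\mathfrak{g}_{\bon},\mathfrak{g}_{\bon}] + \mathfrak{g}_{\bon}$ is generated by its odd part for the relevant $\mathfrak{g}$ (or directly: $[\mathfrak{g}_{\bon}\otimes n,\mathfrak{g}_{\bon}\otimes n]\subseteq \mathfrak{g}_{\boh}\otimes n^2 = 0$ and $\mathfrak{g}_{\bon}$ generates $\mathfrak{g}$). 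Finally, an induction on the nilpotency index of $N=\operatorname{nil}(B)$: once $(\mathfrak{g}\otimes N^k)V=0$ we pass to $A/\Ann_A(V)$ having nilradical of smaller index, and repeat, concluding $\sqrt{\mathfrak{a}} = \mathfrak{a}$, so $A/\Ann_A(V)$ is radical.
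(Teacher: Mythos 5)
There is a genuine gap, and it sits exactly where the real work of the proposition lies. First, a concrete error: for an \emph{even} root $\alpha$ the identity $(x_\alpha\otimes n)^2=\tfrac12[x_\alpha\otimes n,x_\alpha\otimes n]$ is false --- that relation holds only for odd elements of a superalgebra; for even elements $[x,x]=0$ automatically while $x^2\in U(\cG)$ has no reason to vanish or even to act nilpotently. So your claim that $n^2=0$ forces the even root vectors $x_{\pm\alpha}\otimes n$ to act with square zero is unjustified, and in fact proving that $x_\alpha\otimes a$ ($\alpha$ even, $a$ in the square-zero ideal $N$) acts nilpotently is precisely the point at which boundedness of $V$ must be used: the paper invokes the argument of Step 2 of \cite[Proposition 4.4]{BLL15}, which shows that if $x_\alpha\otimes a$ acted injectively one could manufacture $n+1$ linearly independent vectors inside a single weight space $V^{\lambda+n\alpha}$ for every $n$, contradicting the uniform bound. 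Your sketch gestures at ``a careful $\mathfrak{sl}_2$-string argument'' but the actual mechanism is absent, and the circular remark ``since $(x_\alpha\otimes n)^2=0$ forces non-injectivity, it must act with $(x_\alpha\otimes n)^2=0$ identically'' does not supply it.

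Second, and more seriously, the proposal never bridges from ``each $x_\alpha\otimes n$ acts (locally) nilpotently / with square zero'' to ``$(\g\otimes n)V=0$''; nilpotent is far from zero, and this is the heart of the proof. The paper gets there by producing a single nonzero weight vector $w$ killed by all of $\g\otimes N$: Lie's theorem applied to $\h\otimes N$ acting on a weight space gives a common eigenvector, Engel's theorem applied to the abelian algebras $\mathfrak n^{\pm}\otimes N$ (abelian because $N^2=0$) gives annihilation by $\mathfrak n^{\pm}\otimes N$, and then a delicate computation with the elements $(e\otimes 1)^r(f\otimes 1)^r w$ (again using finite-dimensionality of weight spaces) shows the $\h\otimes N$-eigenvalues vanish. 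This Cartan step is exactly where the hypothesis that $\g_{\boh}$ is semisimple enters --- the coroots $h_\alpha$, $\alpha\in\Delta_{\boh}$, span $\h$ --- and it cannot be reached by string arguments since $h_\alpha\otimes n$ preserves weight spaces; your proposal does not address $\h\otimes n$ at all. Finally one needs that $W=\{w\in V\mid (\g\otimes N)w=0\}$ is a $\g\otimes A/I$-submodule (this uses that $N$ is an ideal and that $w$ is killed by \emph{all} of $\g\otimes N$; your smaller set $\{v\mid(\g_{\boh}\otimes n)v=0\}$ is not obviously stable, since brackets land in $\g_{\bon}\otimes nA$), so that simplicity forces $W=V$, contradicting faithfulness of $V$ over $\g\otimes A/I$; no induction on the nilpotency index is then needed, since one simply takes $N=J^{m'}$ with $2m'\ge m$. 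Your treatment of the odd part has the same flaw in miniature: the relation $2(x_\beta\otimes n)^2=[x_\beta\otimes n,x_\beta\otimes n]$ gives square-zero action, not zero action, so the subsequent appeal to $\g=[\g_{\bon},\g_{\bon}]+\g_{\bon}$ does not yet apply.
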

    \begin{proof}
        Set $I:=\Ann_A(V)$. Then $A/I$ is a finite-dimensional commutative algebra and $V$ is a faithful representation of $\g \otimes A/I$. Since $A/I$ is artinian, its nilradical $J=\sqrt{I}/I$ is nilpotent. If $J=0$, then $I=\sqrt{I}$ is radical and the proof is complete. Seeking a contradiction, let's assume that $J \neq 0$. Then there is $m>0$ such that $J^{m-1} \neq 0$, but $J^m=0$. Let $m'$ be the smallest positive integer greater or equal to $m/2$, and set $N=J^{m'}$. Thus $N\neq 0$, and $N^2=0$.
    
        \underline{\textbf{Claim 1:}} \textit{$\g^{\alpha} \otimes N$ acts nilpotently on $V$ for every $\alpha \in \Delta$.}
        
        If $\alpha \in \Delta_{\bon}$, $x \in \g^{\alpha}$, and $a \in N$, then
        \[
            0 = ([x,x] \otimes a^2 )u = [x\otimes a,x\otimes a]u = 2(x \otimes a)^2u,
        \]
        for all $u \in V$, because $a^2 \in N^2 = 0$. Therefore, $(x\otimes a)^2u =0$, and $\g^{\alpha} \otimes N$ acts nilpotently on $V$ for every odd root $\alpha \in \Delta$. If $\alpha$ is even, the proof that $\g^{\alpha} \otimes N$ acts nilpotently on $V$ follows from Step 2 of \cite[Proposition 4.4]{BLL15}.

	\underline{\textbf{Claim 2:}} \textit{There is a nonzero weight vector $w \in V$ such that $(\g \otimes N)w=0$.}
	
	Let $\lambda \in \Supp V$. Note that $\h \otimes N$ is an abelian Lie algebra with finite dimension that commutes with $\h \otimes 1$, thus $\left (\h \otimes N  \right )V^{\lambda} \subset V^{\lambda}$, and, by Lie's Theorem, there is a nonzero vector $v_0 \in V^{\lambda}$ such that $ (\h \otimes N )v_0\subset \kc v_0$. 
	
    Let $\Delta=\Delta^+ \cup \Delta^-$ be a choice positive roots of $\Delta$ and set $\mathfrak{n}^{\pm} := \bigoplus_{\alpha \in \Delta^{\pm}} \g^{\alpha}$. Recall that $N$ is an ideal of $A/I$, $N \neq 0$, and $N^2=0$, thus $[\g \otimes N, \g \otimes N] \subset [\g,\g] \otimes N^2 = 0$. By Claim~1, every element of the finite-dimensional abelian Lie algebra $\mathfrak{n}^+ \otimes N$ acts nilpotently on $V$. Thus $U(\mathfrak{n}^+ \otimes N)v_0$ is a finite-dimensional $(\mathfrak{n}^+ \otimes N)$-module. By Engel's theorem, there is a nonzero vector $u \in U(\mathfrak{n}^+\otimes N)v_0$ such that $(\mathfrak{n}^+ \otimes N)u=0$. Since $U(\mathfrak{n}_+ \otimes N)v_0 \subset V$ is a weight module, we can assume that $u$ is a weight vector. The finite-dimensional abelian Lie algebra $\mathfrak{n}^- \otimes N$ acts nilpotently on $u$, and a similar argument shows that there is a nonzero weight vector $w \in U(\mathfrak{n}^- \otimes N)u$ such that $(\mathfrak{n}^- \otimes N)w=0$. Since $U(\mathfrak{n}^+ \otimes N)$ commutes with $U(\mathfrak{n}^- \otimes N)$ (due to $N^2 =0$), we have $(\mathfrak{n}^+ \otimes N)w=0$. Since $(\h \otimes N)v_0 \subset \kc v_0$, $w \in U( (\mathfrak{n}^- \oplus \mathfrak{n}^+) \otimes N)v_0$, and $\h \otimes N$ commutes with $U( (\mathfrak{n}^- \oplus \mathfrak{n}^+) \otimes N)$, we conclude that $w$ is an eigenvector for every element of $\h \otimes N$. 
	
	It remains to show that $(\h \otimes N)w=0$. The proof that $(h_{\alpha} \otimes t)w =0$ for all $t \in N$ and $\alpha \in \Delta_{\boh}$ is the same as that of Step 5 of \cite[Proposition 4.4]{BLL15}. Since we are assuming $\g_{\boh}$ is a semisimple Lie algebra and $\h$ is a Cartan subalgebra of $\g_{\boh}$ (by definition), the set $\{ h_{\alpha} \mid \alpha \in \Delta_{\boh} \}$ generates $\h$ as a vector space. Hence $(\h \otimes N)w=0$ as we wanted.

	\underline{\textbf{Claim 3:}} \textit{The ideal $I$ is radical.}
	
	 Let $W = \{ w \in V \mid (\g \otimes N)w = 0 \}$. Since $N$ is an ideal, for all $(x \otimes a) \in \g \otimes A/I$ and $y \otimes b \in \g \otimes N$ we get that
    	\[
    (y \otimes b) (x \otimes a) w = (-1)^{|x| |y|}(x \otimes a)(y \otimes b) w +([x,y] \otimes ba )w =0.
        \]
	Together with Claim 2, we obtain that $W$ is a nonzero $\g \otimes A/I$-submodule of $V$. Since $V$ is simple, $W=V$, and hence $0 \neq N \subset \Ann_{A/I}(V)$, which contradicts the fact that $V$ is a faithful representation of $\g \otimes A/I$ (since $(\g \otimes N )V=0$). Hence $J=\sqrt{I}/I=0$, which implies that $I=\Ann_{A}(V)$ is a radical ideal of $A$.
    \end{proof}

    \begin{definition}[Evaluation modules]
    Suppose $\mathsf{m}_1,\dots,\mathsf{m}_r \in \Specm A$ are pairwise distinct. The associated \emph{evaluation map} $\textup{ev}_{\mathsf{m}_1,\dots,\mathsf{m}_r}$ is the composition
    $$\textup{ev}_{\mathsf{m}_1,\dots,\mathsf{m}_r} : \mathcal{G} \rightarrow \mathcal{G} / \left (\g \otimes \prod_{i=1}^r \mathsf{m}_i \right )\cong  \bigoplus_{i=1}^r \left (\g \otimes A/\mathsf{m}_i \right ) \cong \g^r.$$
    If $V_i$ is a $\g$-module with respective representations $\rho_i: \g \rightarrow \textup{End} \, V_i$ for each $i=1,\dots,r$, then the composition $$\mathcal{G} \xrightarrow{\textup{ev}_{\mathsf{m}_1,\dots,\mathsf{m}_r}} \g^r \xrightarrow{\bigotimes_{i=1}^r \rho_i} \textup{End}\left (\bigotimes_{i=1}^r V_i \right) $$
    is a representation of $\mathcal{G}$ called an \emph{evaluation representation}. The corresponding module is called an \emph{evaluation module}, and is denoted by $$\bigotimes_{i=1}^r V_i^{\mathsf{m}_i}.$$ 
    \end{definition}
    
    For each simple finite weight $\mathcal{G}$-module (or $\g$-module) $V$, we denote $R_V= C_V \cap \Delta$. Notice that if $V$ is cuspidal, then $R_V$ is equal to $\Delta$.

\begin{lemma}\label{lemma:maximunoneuniformbounded}
Let $r\geq 2$, and $V_1,\ldots, V_r$ be simple bounded weight $\g$-modules such that $\Delta = \cup_{i=1}^r R_{V_i}$. Then the $\g$-module $V:=\bigotimes_{i=1}^r V_i$ is bounded if and only if $\dim V_i = \infty$ for at most one $i=1,\ldots, r$.
\end{lemma}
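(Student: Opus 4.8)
The plan is to treat the two implications separately. The implication ``at most one infinite-dimensional factor $\Rightarrow$ $V$ bounded'' is a short direct estimate: after relabelling, suppose $V_2,\dots,V_r$ are finite-dimensional, so that $W:=V_2\otimes\cdots\otimes V_r$ is finite-dimensional, $\Supp W$ is finite, and $N:=\sup_\nu\dim W^\nu<\infty$. Since $(V_1\otimes W)^\mu=\bigoplus_{\nu\in\Supp W}V_1^{\mu-\nu}\otimes W^\nu$, we get $\dim V^\mu\le N\,|\Supp W|\,\bigl(\sup_\lambda\dim V_1^\lambda\bigr)$ for every $\mu$, a bound that does not depend on $\mu$; as $V_1$ is bounded this shows $V$ is bounded.

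For the converse I would argue by contraposition, reducing it to the following statement: if at least two of the $V_i$ are infinite-dimensional, then there are distinct indices $p,q$ with $\dim V_p=\dim V_q=\infty$ and an \emph{even} root $\alpha$ such that $\alpha\in C_{V_p}$ and $-\alpha\in C_{V_q}$ (the \emph{Claim}). Granting the Claim, recall that $\alpha\in C_{V_p}$ forces $V_p^{\lambda+n\widetilde\alpha}\neq 0$ for a suitable positive integer multiple $\widetilde\alpha$ of $\alpha$, every $\lambda\in\Supp V_p$ and every $n\ge 0$; applying this both to $\alpha\in C_{V_p}$ and to $-\alpha\in C_{V_q}$ and replacing $\alpha$ by a common positive multiple, we may assume $V_p^{\lambda_p+n\alpha}\neq 0$ and $V_q^{\lambda_q-n\alpha}\neq 0$ for all $n\ge 0$, where $\lambda_p\in\Supp V_p$, $\lambda_q\in\Supp V_q$ are fixed. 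Choosing nonzero vectors $v_p^{(n)}\in V_p^{\lambda_p+n\alpha}$, $v_q^{(n)}\in V_q^{\lambda_q-n\alpha}$ and a nonzero weight vector $v_j$ in each remaining $V_j$, the vectors
\[
u^{(n)}=v_1\otimes\cdots\otimes v_p^{(n)}\otimes\cdots\otimes v_q^{(n)}\otimes\cdots\otimes v_r
\]
all have one and the same weight (the contributions $\pm n\alpha$ cancel), and they are linearly independent because for distinct $n$ they lie in the subspaces $\bigl(\bigotimes_{j\neq p,q}\k v_j\bigr)\otimes V_p^{\lambda_p+n\alpha}\otimes V_q^{\lambda_q-n\alpha}$, which meet pairwise in $0$. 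Hence that weight space of $V$ is infinite-dimensional, so $V$ is not even a finite weight module, a fortiori not bounded.

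It remains to prove the Claim. A finite-dimensional module has no injective root, so $C_{V_j}=\{0\}$ and $R_{V_j}=\emptyset$ whenever $\dim V_j<\infty$; thus, writing $S=\{i:\dim V_i=\infty\}$ (so $|S|\ge 2$), the hypothesis $\Delta=\bigcup_iR_{V_i}$ becomes $\Delta_{\boh}=\bigcup_{i\in S}J_i$ with $J_i:=C_{V_i}\cap\Delta_{\boh}$; each $J_i$ is closed (as $C_{V_i}$ is a submonoid of $\mathcal{Q}$) and nonempty (Corollary~\ref{corollary:thereisinjroot}). Suppose the Claim fails. Then for $\alpha\in J_i$ there is $k\in S$ with $-\alpha\in J_k$ (since $-\alpha\in\Delta_{\boh}$), and $k=i$ is forced, so every $J_i$ is symmetric; and then $\alpha\in J_i\cap J_j$ with $i\neq j$ would give $\alpha\in J_j$, $-\alpha\in J_i$, again forbidden, so the $J_i$ are pairwise disjoint. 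Hence the $J_i$ ($i\in S$) are at least two nonempty pairwise disjoint closed symmetric subsets partitioning $\Delta_{\boh}$. If $\Delta_{\boh}$ is irreducible (the case $\g\cong\mathfrak{osp}(1|2)$) this is already impossible. Otherwise, for the superalgebras occurring here every odd root $\mu$ of $\g$ satisfies $2\mu\notin\Delta$ and so acts with square zero on any module; thus $\inj V_i\subseteq\Delta_{\boh}$, hence $C_{V_i}$ lies in the $\Q$-span of $J_i$, while the hypothesis forces every odd root of $\g$ to lie in some $C_{V_i}$, hence in the span of the corresponding $J_i$. A direct inspection of the remaining root systems $\Delta_{\boh}\in\{A_1\times C_3,\ A_1\times A_1\times C_4,\ B_2\times C_5,\ A_3\times C_6,\ A_1\times A_1\times A_1\}$, together with the explicit lists of odd roots, shows that an odd root can lie in the span of a union of even irreducible components only if that union ``straddles'' $\g_{\boh}$ (meets both the orthogonal and the symplectic part, resp.\ all three $\mathfrak{sl}_2$-summands for $D(2,1;a)$), which for a partition forces that $J_i$ to be all of $\Delta_{\boh}$ --- incompatible with $|S|\ge 2$. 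This contradiction establishes the Claim.

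The soft part of this argument (a root and its negative being ``used'' by different factors immediately produces an infinite-dimensional weight space) is robust; the genuine obstacle is exactly the last combinatorial input --- that $\Delta_{\boh}$ admits no partition into closed symmetric subsystems compatible with covering all the odd roots. For Lie algebra bases the analogous statement is \cite{BLL15}; what makes the super case different, and what makes the naive partition impossible precisely when one might expect it, is the presence of odd roots gluing together the even components of $\g_{\boh}$, as happens already for $D(2,1;a)$ and $\mathfrak{osp}(n|2n)$.
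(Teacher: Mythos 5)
Your easy direction and the step ``Claim $\Rightarrow$ not bounded'' are fine (opposite strings in two different factors do produce an infinite-dimensional weight space). The genuine gap is in your proof of the Claim itself, which is where all the content lies. First, you prove it only ``for the superalgebras occurring here'', i.e.\ for $\mathfrak{osp}(1|2)$, $\mathfrak{osp}(n|2n)$ with $2<n\leq 6$ and $D(2,1;a)$; but the lemma is stated for the arbitrary basic classical $\g$ fixed in the preliminaries, and nothing in its hypotheses (simple bounded weight $\g$-modules with $\Delta=\cup_i R_{V_i}$) restricts $\g$ to that cuspidal list --- that restriction only appears later, as an output of Remark~\ref{remark:giscuspidalevi}, and is not available inside the lemma. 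So types $B(m,n)$, $D(m,n)$, $A(n,n)$, $F(4)$, $G(3)$, etc.\ are simply not treated. Second, even on your list the assertion ``every odd root $\mu$ satisfies $2\mu\notin\Delta$, hence acts with square zero'' is false: for $\mathfrak{osp}(3|6)$ and $\mathfrak{osp}(5|10)$ the odd roots $\pm\delta_j$ have $2\delta_j\in\Delta_{\boh}$. The conclusion you actually need, $C_{V_i}\subseteq \Q\text{-span}(J_i)$, can be salvaged (an injective odd root with $2\mu\in\Delta$ forces $2\mu$ injective by Lemma~\ref{lemma:rootproperties00}), but as written the justification is wrong. Third, your ``direct inspection'' tacitly treats each $J_i$ as a union of irreducible components of $\Delta_{\boh}$; a closed symmetric subset need not be of that form, and you must first argue that the parts of the partition are pairwise orthogonal (if $\alpha\in J_i$, $\beta\in J_j$, $i\neq j$, with $\alpha+\beta\in\Delta$ or $(\alpha,\beta)\neq 0$, one manufactures a root and its negative in different parts). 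This is true but missing, and without it the case check does not start.

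It is worth contrasting with the paper's argument, which avoids all of this combinatorics and is type-free: assuming $\dim V_1=\dim V_2=\infty$, one uses only that $R_{V_1}$ is closed and that $\g$ is simple. Either $(R_{V_1}+\cup_{i\geq 2}R_{V_i})\cap\Delta$ lands back in $R_{V_1}$ (or is empty), in which case the span of the $R_{V_1}$-root spaces is an ideal, hence all of $\g$, so $R_{V_1}=\Delta\supseteq R_{V_2}$ and one gets unbounded growth of multiplicities along a common direction $\widetilde\alpha\in\Z_+\inj V_1\cap\Z_+\inj V_2$; or there are $\alpha\in R_{V_1}$, $\beta\in R_{V_j}$ with $\alpha+\beta\in\cup_{i\geq 2}R_{V_i}$, and the multiplicities grow without bound along $n(\widetilde\alpha+\widetilde\beta)$. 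Note this shows $V$ is unbounded without producing (or needing) a pair $\alpha\in C_{V_p}$, $-\alpha\in C_{V_q}$, which is exactly the statement your route forces you to verify case by case. If you want to keep your strategy, you must prove your Claim uniformly for all basic classical $\g$ (including the quotient subtlety in the $A(n,n)$ root lattice) or replace it by an argument in the spirit of the paper's ideal/simplicity dichotomy.
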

\begin{proof}
It is clear that if $\dim V_i = \infty$ for at most one $i=1,\ldots, r$, then $V$ is bounded.

Assume without loss of generality that $\dim V_1 =  \dim V_2 = \infty$. Let's prove that $V$ is not bounded. Notice that the fact that $R_{V_1}$ is closed implies that $\g^1 := \bigoplus_{\alpha\in R_{V_1}} h_\alpha \oplus \bigoplus_{\alpha\in R_{V_1}} \g^\alpha$ is a subalgebra of $\g$.

If $(R_{V_1} + \cup_{i=2}^r R_{V_i})\cap \Delta = \emptyset$ or $(R_{V_1} + \cup_{i=2}^r R_{V_i})\cap \Delta \subseteq R_{V_1}$, then $\bigoplus_{\alpha\in R_{V_1}} h_\alpha \oplus \bigoplus_{\alpha\in R_{V_1}} \g^\alpha$ is an ideal of $\g$, which gives $\g = \g^1$, since $\g$ is simple. Thus, $\Delta = R_{V_1}$ and $R_{V_2}\subseteq R_{V_1}$. For $\alpha\in R_{V_2}$ (which is nonempty, as $\dim V_2=\infty$), pick $m\in \Z_+$ so that $\widetilde{\alpha} = m\alpha\in (\Z_+ \inj V_1)\cap (\Z_+ \inj V_2)$. Now, for any $r$-tuple of weights $(\lambda_1,\ldots, \lambda_r)\in \times_{i=1}^r \Supp V_i$, we have
    \[
\lim_{n\to \infty} (\dim V^{\lambda_1+\cdots + \lambda_r + n\widetilde{\alpha}}) = \infty
    \]
as, for each $n\geq 0$, the following inclusion holds
    \[
\bigoplus_{l+k=n}^n V_1^{\lambda_1 + l\widetilde{\alpha}}\otimes V_2^{\lambda_2 + k\widetilde{\alpha}}\otimes V_3^{\lambda_3}\otimes \cdots \otimes V_r^{\lambda_r}\subseteq V^{\lambda_1+\cdots + \lambda_r + n\widetilde{\alpha}}.
    \]

Assume now that there are $\alpha\in R_{V_1}$ and $\beta\in R_{V_j}$ for $j\geq 2$ such that $\alpha+\beta\in \cup_{i=2}^r R_{V_i}$. Suppose without loss of generality that $j=2$ and fix any $r$-tuple of weights $(\lambda_1,\ldots, \lambda_r)\in \times_{i=1}^r \Supp V_i$. We claim that there is $m \in \Z_+$ such that $\widetilde{\alpha} := m\alpha$ and $\widetilde{\beta} := m\beta$ satisfy the following condition
    \[
\lim_{n\to \infty} \dim \left (V^{\lambda_1+\cdots + \lambda_r + n(\widetilde{\alpha}+\widetilde{\beta} )} \right ) = \infty
    \]
Indeed, for all $n\geq 0$, one of the following hold:

(1): $\alpha+\beta\in R_{V_2}$. In this case, we take $m\in \Z_+$ for which $\widetilde{\alpha} = m\alpha\in \Z_+\inj V_1$,  $\widetilde{\beta} = m\beta \in \Z_+ \inj V_2$ and $m(\alpha+\beta) = \widetilde{\alpha}+\widetilde{\beta} \in \Z_+ \inj V_2$. Then
    \[
\bigoplus_{l=0}^n V_1^{\lambda_1 + l\widetilde{\alpha}}\otimes V_2^{\lambda_2 + l\widetilde{\beta} +  (n-l)(\widetilde{\alpha}+\widetilde{\beta})}\otimes V_3^{\lambda_3}\otimes \cdots \otimes V_r^{\lambda_r}\subseteq V^{\lambda_1+\cdots + \lambda_r + n(\widetilde{\alpha}+\widetilde{\beta})}
    \]
    
(2): $\alpha+\beta\in R_{V_k}$ for $k=3,\ldots, r$, and (for convenience of notation, assume that $k=3$). Similarly to the previous case, take $m\in \Z_+$ such that $\widetilde{\alpha} = m\alpha\in \Z_+\inj V_1$,  $\widetilde{\beta} = m\beta \in \Z_+ \inj V_2$ and $m(\alpha+\beta) = \widetilde{\alpha}+\widetilde{\beta} \in \Z_+ \inj V_3$. Then
    \[
\bigoplus_{l=0}^n V_1^{\lambda_1 + l\widetilde{\alpha}}\otimes V_2^{\lambda_2 + l\widetilde{\beta}}\otimes V_3^{\lambda_3 +  (n-l)(\widetilde{\alpha}+\widetilde{\beta})}\otimes V_4^{\lambda_4}\otimes \cdots\otimes V_r^{\lambda_r}\subseteq V^{\lambda_1+\cdots + \lambda_r + n(\widetilde{\alpha}+\widetilde{\beta})}
    \]
    
In any case, we conclude that $\lim_{n\to \infty} (\dim V^{\lambda_1+\cdots + \lambda_r + n(\widetilde{\alpha}+\widetilde{\beta})}) = \infty$.
\end{proof}

     \begin{theorem}\label{theorem:uniformlyboundisevaluation}
        Suppose $\g_{\boh}$ is semisimple (in particular, this is the case when $\cG$ admits cuspidal modules). If $V$ is a simple cuspidal bounded $\mathcal{G}$-module, then $V$ is isomorphic to an evaluation module.
    \end{theorem}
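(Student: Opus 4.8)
The plan is to show that the $\cG$-action on $V$ factors through an evaluation map onto a finite power $\g^{r}$ of $\g$, and then to identify a simple module over $\g^{r}$ with an external tensor product of simple $\g$-modules, which is exactly an evaluation module.

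First I would pin down the finite-dimensional quotient of $A$ through which $V$ acts. Setting $I:=\Ann_A(V)$, Proposition~\ref{proposition:idealcofinite} gives that $A/I$ is finite-dimensional, and since $\g_{\boh}$ is semisimple, Proposition~\ref{proposition:kernelisradicaluniform} shows that $I$ is radical, so $A/I$ is a finite-dimensional reduced commutative $\kc$-algebra. As $\kc$ is algebraically closed, this forces $A/I\cong\kc^{r}$ with $r=\dim_\kc(A/I)$; equivalently, the maximal ideals of $A$ containing $I$ form a finite set $\{\mathsf m_1,\dots,\mathsf m_r\}$ with $I=\mathsf m_1\cap\cdots\cap\mathsf m_r=\prod_i\mathsf m_i$ and $A/\mathsf m_i\cong\kc$. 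Then $\g\otimes I$ annihilates $V$, so the $\cG$-action factors through the evaluation map $\textup{ev}_{\mathsf m_1,\dots,\mathsf m_r}\colon\cG\twoheadrightarrow\g\otimes(A/I)\cong\g^{r}$; since this map is surjective, $V$ is a simple module over $\g^{r}$, and it is a finite weight module over $\h^{r}$ because the $\h^{r}$-weight decomposition refines the (already finite-dimensional) $\h$-weight decomposition.

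Next I would apply the tensor product theorem. Writing $\g\otimes(A/I)\cong(\g\otimes\kc)\oplus(\g\otimes\kc^{r-1})$ and applying Theorem~\ref{proposition:simpleofsumistensor} with $\g_1=\g_2=\g$, $S_1=\kc$, $S_2=\kc^{r-1}$, and then iterating, I obtain $V\cong V_1\hat\otimes V_2\hat\otimes\cdots\hat\otimes V_r$ with each $V_i$ a simple finite weight $\g$-module, namely the $i$-th evaluation factor. To turn these ``irreducible tensor products'' into honest ones I would use the following elementary fact, valid for every basic classical $\g$: any simple weight $\g$-module $W$ satisfies $\textup{End}_\g(W)=\kc\,\mathrm{id}$. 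Indeed, since $[\g^\alpha,\g^\beta]\subseteq\g^{\alpha+\beta}$ with the $\Z_2$-parities adding, there is a group homomorphism $p\colon\mathcal Q\to\Z/2\Z$ with $p(\alpha)=0$ for $\alpha\in\Delta_{\boh}$ and $p(\alpha)=1$ for $\alpha\in\Delta_{\bon}$, and (by the PBW theorem) $U(\g)_{\boh}=\bigoplus_{\eta\in p^{-1}(0)}U(\g)^\eta$ shifts $\h$-weights only by elements of $p^{-1}(0)$. Hence, writing $W=U(\g)w$ for a homogeneous weight vector $w$, the supports of $W_{\boh}$ and $W_{\bon}$ lie in the disjoint cosets $\mathrm{wt}(w)+p^{-1}(0)$ and $\mathrm{wt}(w)+p^{-1}(1)$, so $W_{\boh}\not\cong W_{\bon}$ as $\h$-modules, and therefore case~(2) of Lemma~\ref{lemma:lemmadeschur} is excluded.

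Finally, I would run the iteration once more and apply the ``moreover'' clause of Theorem~\ref{proposition:simpleofsumistensor} at each stage: the left factor split off at stage $j$ is the simple $\g$-module $V_j$, which has $\textup{End}_\g(V_j)=\kc\,\mathrm{id}$, so each $\hat\otimes$ is in fact an $\otimes$. Thus $V\cong V_1\otimes\cdots\otimes V_r$ as a $\g^{r}$-module, and, viewed through $\textup{ev}_{\mathsf m_1,\dots,\mathsf m_r}$, this is precisely the evaluation module $\bigotimes_{i=1}^{r}V_i^{\mathsf m_i}$. The only substantial inputs are Proposition~\ref{proposition:kernelisradicaluniform} (where the boundedness of $V$ is genuinely used) and Theorem~\ref{proposition:simpleofsumistensor}; the step I expect to need the most care is the passage from $\hat\otimes$ to $\otimes$, which is settled by the parity-detection observation above — and which uses cuspidality only through boundedness, so the argument in fact yields the statement for every simple bounded weight $\cG$-module.
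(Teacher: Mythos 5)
Your first two steps coincide with the paper's proof: Proposition~\ref{proposition:kernelisradicaluniform} makes $I=\Ann_A(V)$ radical, the Chinese Remainder Theorem identifies $\cG/(\g\otimes I)$ with $\g^{\oplus r}$, and iterating Theorem~\ref{proposition:simpleofsumistensor} gives $V\cong V_1\hat{\otimes}\cdots\hat{\otimes}V_r$ with each $V_i$ a simple finite weight $\g$-module. The divergence, and the gap, lies in your passage from $\hat{\otimes}$ to $\otimes$. You base it on the claim that for \emph{every} basic classical $\g$ the parity of roots is computed by a group homomorphism $p\colon\mathcal{Q}\to\Z/2\Z$ with $p(\Delta_{\boh})=0$, $p(\Delta_{\bon})=1$, whence $\textup{End}_\g(W)=\kc\,\mathrm{id}$ for every simple weight $\g$-module $W$. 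No such homomorphism exists in general: for $\g=A(n,n)=\mathfrak{psl}(n{+}1|n{+}1)$ the supertrace functional vanishes on $\h$, so $\sum_{i=1}^{n+1}(\varepsilon_i-\delta_i)=0$ in $\h^*$ is a vanishing sum of $n{+}1$ odd roots, and for $n$ even this forces $0=p(0)\equiv n{+}1\equiv 1\pmod 2$, a contradiction. Since $\g_{\boh}=A_n\oplus A_n$ is semisimple, $A(2,2)$ satisfies the hypothesis of Theorem~\ref{theorem:uniformlyboundisevaluation}, so your ``elementary fact'' is unjustified exactly where case (2) of Lemma~\ref{lemma:lemmadeschur} would have to be excluded. (For the superalgebras that actually admit cuspidal modules, listed in Remark~\ref{remark:giscuspidalevi}, such a $p$ does exist; but you neither invoke that reduction nor verify it case by case.)

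The paper closes this step differently, using cuspidality beyond boundedness: cuspidality gives $R_V=\Delta=\bigcup_i R_{V_i}$, Lemma~\ref{lemma:maximunoneuniformbounded} then forces all but at most one $V_i$ to be finite-dimensional, and those factors are highest weight modules with $\textup{End}_\g(V_i)\cong\kc$, so the ``moreover'' clause of Theorem~\ref{proposition:simpleofsumistensor} converts every $\hat{\otimes}$ into $\otimes$ uniformly, with no case analysis on $\g$. For the same reason your final assertion that the argument ``yields the statement for every simple bounded weight $\cG$-module'' is not established: it rests on the unproved endomorphism claim precisely for algebras such as $A(n,n)$ with $n$ even, where that stronger statement is not vacuous. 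To repair the proof, either restrict to the cuspidal superalgebras of Remark~\ref{remark:giscuspidalevi} and prove the existence of $p$ for each of them, or replace this step by the cuspidality argument via Lemma~\ref{lemma:maximunoneuniformbounded} as the paper does.
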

    \begin{proof}
    By Proposition \ref{proposition:kernelisradicaluniform}, $I=\Ann_A (V)$ is radical, so there are distinct maximal ideals $\mathsf{m}_1,\dots,\mathsf{m}_r$ of $A$ such that $I=\bigcap_{i=1}^r \mathsf{m}_i$. By the Chinese Remainder Theorem, $$\mathcal{G} / \left ( \g \otimes \bigcap_{i=1}^r \mathsf{m}_i \right )  \cong  \g \otimes \left (  A/\mathsf{m}_1 \oplus \dots \oplus A/\mathsf{m}_1 \right ) \cong \bigoplus_{i=1}^r \left (\g \otimes A /\mathsf{m}_i \right ) \cong \g^{\oplus r},$$
    where the last isomorphism follows from the fact that $A$ is a finitely generated algebra over the algebraically closed field $\kc$.
    In particular, $V$ is a simple module over $\g^{\oplus r}$.
    By Theorem~\ref{proposition:simpleofsumistensor},  $V \cong V_1 \hat{\otimes} \dots \hat{\otimes} V_r$, where $V_1,\dots,V_r$ are simple finite weight $\g$-modules.
    
    Now, notice that the $\g$-module $V_1 \hat{\otimes} \dots \hat{\otimes} V_r$ is bounded if and only if the $\g$-module $\widetilde{V}:=V_1 \otimes \dots \otimes V_r$ is bounded. Moreover, since $V$ is cuspidal, we have that $\Delta = R_V$. Since either $\widetilde{V}\cong V\oplus V$ or $\widetilde{V}\cong V$ (see Proposition~\ref{proposition:chengresultfortensorproduct}) we conclude that $R_V = \bigcup_{i=1}^r R_{V_i}$. Indeed, this follows from Lemma~\ref{lemma:dasalphastring} along with the fact that $\Supp \widetilde{V} = \Supp V \times \Supp V$, and $\widetilde{V}
   ^\lambda = \bigotimes_{\lambda_1+\cdots + \lambda_r = \lambda} V_i^{\lambda_i}$.  Thus, by Lemma \ref{lemma:maximunoneuniformbounded}, at most one of the $V_i$ has infinite dimension. Without loss of generality, we assume that $ \dim V_1 = \infty$, and $\dim V_i < \infty$ for each $i=2,\dots,r$. In particular, for all $i>1$, the $\g$-modules $V_i$ are highest weight modules, which implies that $\textup{End}_{\g} \, V_i \cong \kc$. Now Theorem \ref{proposition:simpleofsumistensor} gives us that $$V \cong V_1 \hat{\otimes} \dots \hat{\otimes} V_r = V_1 \otimes \dots \otimes V_r.$$
    
    In terms of representations, we obtained that the representation $\rho: \mathcal{G} \rightarrow \textup{End}(V)$ associated to the $\mathcal{G}$-module $V$ factors through the following composition 
    \begin{align*}
        \mathcal{G} \rightarrow \g \otimes \left (A/ \Ann_A (V) \right ) \xrightarrow{\textup{ev}_{\mathsf{m}_1,\dots,\mathsf{m}_r}} \g^{\oplus r} \xrightarrow{\otimes_{i=1}^r \rho_i }  \textup{End} \left ( \bigotimes_{i=1}^r V_i \right ),
    \end{align*}
    where $\rho_i: \g \rightarrow \textup{End} (V_i) $ is the representation associated to the $\g$-module $V_i$ for each $i=1,\dots,r$. But this composition is precisely the representation associated to the evaluation module $\bigotimes_{i=1}^r V_I^{\mathsf{m}_i}$. The proof is completed. 
    \end{proof}
 
\begin{proposition}\label{proposition:uniformmyifandonlyatmostone}
Let $V_1,\dots,V_r$ be simple finite weight $\g$-modules and $\mathsf{m}_1,\dots,\mathsf{m}_r$ pairwise distinct maximal ideals of $A$. Then the evaluation $\cG$-module $V := \bigotimes_{i=1}^r  V_i^{\mathsf{m}_i}$ is a cuspidal bounded module only if $\dim V_i=\infty$ for precisely one $V_i$, in which case $V_i$ is a cuspidal bounded $\g$-module. In particular, if $V$ is a cuspidal bounded $\mathcal{G}$-module, then it is simple.
\end{proposition}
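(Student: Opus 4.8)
The plan is to use the Chinese Remainder Theorem to realize $V$ as a genuine tensor product, reduce the counting of weight multiplicities to the sumset $\Supp V_1+\cdots+\Supp V_r$, and then apply Lemma~\ref{lemma:maximunoneuniformbounded}. Since the $\mathsf{m}_i$ are pairwise distinct and $A$ is a finitely generated algebra over the algebraically closed field $\kc$, we have $\cG/(\g\otimes\prod_i\mathsf{m}_i)\cong\g^{\oplus r}$ and the evaluation representation factors through this quotient; thus $V\cong V_1\otimes\cdots\otimes V_r$ as a $\g^{\oplus r}$-module, and under the diagonal embedding $\g\hookrightarrow\g^{\oplus r}$ one has $\Supp V=\Supp V_1+\cdots+\Supp V_r$ and $V^\lambda=\bigoplus_{\lambda_1+\cdots+\lambda_r=\lambda}V_1^{\lambda_1}\otimes\cdots\otimes V_r^{\lambda_r}$. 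First I would observe that boundedness of $V$ forces each $V_i$ to be bounded: fixing $\lambda_j\in\Supp V_j$ for $j\neq i$, the single summand $V_1^{\lambda_1}\otimes\cdots\otimes V_r^{\lambda_r}$ has dimension at least $\dim V_i^{\lambda_i}$, so $\dim V_i^{\lambda_i}\leq\dim V^{\lambda_1+\cdots+\lambda_r}$ is uniformly bounded in $\lambda_i\in\Supp V_i$.

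Now assume in addition that $V$ is cuspidal, so $R_V=\Delta$. The heart of the proof is to show $\Delta=\bigcup_{i=1}^r R_{V_i}$ and then invoke Lemma~\ref{lemma:maximunoneuniformbounded}. The inclusion $\bigcup_i R_{V_i}\subseteq R_V$ is immediate from Lemma~\ref{lemma:dasalphastring}: if the $\alpha$-string through some weight of $V_i$ is unbounded above, then tensoring with fixed nonzero weight vectors of the other factors shows the $\alpha$-string through the corresponding weight of $V$ is unbounded above. For the reverse inclusion, when $\alpha$ is even (or odd with $2\alpha\in\Delta$) I would use that, under the diagonal action, $x_\alpha\otimes1$ acts on $V_1\otimes\cdots\otimes V_r$ as a sum of $r$ pairwise commuting operators, one per tensor slot; a sum of commuting nilpotent operators is nilpotent, so if $x_\alpha\otimes1$ acts nilpotently on every $V_i$ it acts nilpotently on $V$, and Proposition~\ref{proposition:equivalentnilpotent} then gives $\alpha\notin\bigcup_i R_{V_i}\Rightarrow\alpha\notin R_V$. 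For the remaining odd roots $\alpha$ (those with $2\alpha\notin\Delta$), one argues through $R_V=C_V\cap\Delta$: such roots act nilpotently everywhere (Lemma~\ref{lemma:equivalentrootnilinv}), so $\inj V=\bigcup_i\inj V_i$ consists of even roots only, and the boundedness of $V$, via the sumset description of $\Supp V$, rules out a decomposition of a positive multiple $m\alpha\in\Z_+\inj V$ whose summands are drawn from $\inj V_i$ for two or more distinct $i$ — such a decomposition would generate arbitrarily large weight multiplicities along the corresponding direction, just as in the proof of Lemma~\ref{lemma:maximunoneuniformbounded} — forcing $m\alpha\in\Z_+\inj V_i$ for a single $i$, i.e.\ $\alpha\in R_{V_i}$.

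With $\Delta=\bigcup_i R_{V_i}$ established, Lemma~\ref{lemma:maximunoneuniformbounded} applied to the bounded $\g$-module $V=\bigotimes_i V_i$ forces $\dim V_i=\infty$ for at most one $i$; and the $V_i$ cannot all be finite-dimensional, since a finite-dimensional simple module has no injective roots, so $R_{V_i}=\emptyset$ for all $i$ would give $\Delta=R_V=\emptyset$, absurd. So exactly one factor $V_{i_0}$ is infinite-dimensional, and it is bounded by the first paragraph. Since $R_{V_i}=\emptyset$ for every $i\neq i_0$, we get $R_{V_{i_0}}=\Delta$; but any simple finite weight $\g$-module $W$ with $R_W=\Delta$ must be cuspidal, because a proper parabolic induction $L_T(W')$ has all roots of $\Delta_T^+$ locally finite (PBW for $M_T(W')$ together with the fact that the functional defining $T$ is constant on $\Supp W'$), whence $R_{L_T(W')}\subseteq\Delta_T^0\cup\Delta_T^-\subsetneq\Delta$. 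Therefore $V_{i_0}$ is a cuspidal bounded $\g$-module.

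Finally, for the last assertion, each finite-dimensional simple $\g$-module $V_i$ with $i\neq i_0$ is a highest weight module by Proposition~\ref{proposition:twocasesparte1}, so $\textup{End}_\g(V_i)\cong\kc$; iterating Lemma~\ref{proposition:tensorissimple} then shows that, when $r\geq2$, $V=V_1\otimes\cdots\otimes V_r$ is simple, while for $r=1$ we have $V\cong V_1$, simple by hypothesis. The step I expect to be the genuine obstacle is the reverse inclusion $R_V\subseteq\bigcup_i R_{V_i}$ for the odd roots with $2\alpha\notin\Delta$: there the relevant shadow direction is not witnessed by the nilpotency of a single operator, and one must carefully feed the boundedness of $V$ into the combinatorics of the sumset $\Supp V_1+\cdots+\Supp V_r$.
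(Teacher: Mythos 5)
Your overall route is the paper's: realize $V$ as the $\g^{\oplus r}$-module $V_1\otimes\cdots\otimes V_r$ via the Chinese Remainder Theorem, deduce boundedness of each $V_i$ from the summand $V_1^{\lambda_1}\otimes\cdots\otimes V_r^{\lambda_r}\subseteq V^{\lambda_1+\cdots+\lambda_r}$, invoke Lemma~\ref{lemma:maximunoneuniformbounded} to exclude two infinite-dimensional factors, and get simplicity from $\textup{End}_{\g}(V_i)\cong\kc$ for the finite-dimensional factors. You are in fact more thorough than the paper on two points it leaves implicit: the cuspidality of the surviving infinite-dimensional factor, and --- crucially --- the verification of the hypothesis $\Delta=\bigcup_iR_{V_i}$ of Lemma~\ref{lemma:maximunoneuniformbounded}, which the paper's proof invokes without checking.

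However, your treatment of that hypothesis for odd roots $\alpha$ with $2\alpha\notin\Delta$ is not a proof, and this is a genuine gap. You claim that a decomposition $m\alpha=\sum n_\beta\beta$ with the $\beta\in\inj V$ drawn from two or more distinct sets $\inj V_i$ ``would generate arbitrarily large weight multiplicities, just as in the proof of Lemma~\ref{lemma:maximunoneuniformbounded}.'' But the mechanism in that lemma is not ``summands from different factors'': it requires either a single direction $\widetilde{\alpha}$ injective in two factors, or a triple $\widetilde{\alpha}\in\inj V_1$, $\widetilde{\beta}\in\inj V_2$ with $\widetilde{\alpha}+\widetilde{\beta}$ again injective in some factor, so that $n(\widetilde{\alpha}+\widetilde{\beta})$ admits $n+1$ distinct decompositions landing in nonzero weight spaces. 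If instead $m\alpha=\gamma_1+\gamma_2$ with $\gamma_i\in\Z_+\inj V_i$ linearly independent, the decomposition of $nm\alpha$ as $\mu_1+\mu_2$ compatible with $\Supp V_1\times\Supp V_2$ can perfectly well be unique for every $n$, and no growth of $\dim V^{\lambda+nm\alpha}$ follows. Concretely, when $\g_{\boh}$ has several simple components (e.g.\ $D(2,1;a)$, where $2\alpha=(\pm2\varepsilon_1)+(\pm2\varepsilon_2)+(\pm2\varepsilon_3)$ mixes all three components), nothing in your argument rules out $\inj V_1$ and $\inj V_2$ lying in complementary components, in which case $\alpha\in C_V\setminus(C_{V_1}\cup C_{V_2})$ and the hypothesis of Lemma~\ref{lemma:maximunoneuniformbounded} genuinely fails. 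Closing this requires a different idea --- e.g.\ a case analysis using the explicit root systems of the cuspidal superalgebras listed in Remark~\ref{remark:giscuspidalevi}. To be fair, you correctly identified this as the crux, and the paper's own proof is silent on exactly this point; the remaining steps of your argument (boundedness of each factor, cuspidality of $V_{i_0}$ from $R_{V_{i_0}}=\Delta$, and simplicity by iterating Lemma~\ref{proposition:tensorissimple}) are correct.
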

\begin{proof}
If $V_1,\dots,V_r$ are bounded and no more than one of them is infinite-dimensional, then it is clear that $V$ is bounded. On the other hand, if $V $ is bounded and $N>0$ is such that $ \dim V^{\lambda} \leq N$ for each $\lambda \in \Supp V$, then the dimension of the weight spaces of each $V_i$ has to be less or equal to 
$N$ as well. Thus each $V_i$ is bounded, and, by Lemma \ref{lemma:maximunoneuniformbounded}, no more than one $V_i$ can be infinite-dimensional. This proves the first statement. 
    
The second statement follows from Theorem \ref{proposition:simpleofsumistensor} along with the fact that $\textup{End}_\g (V_i) \cong \kc$ for all, except at most one $i=1,\ldots, r$.
\end{proof}

\section{Affine Kac-Moody Lie superalgebras}
\label{section:applicationtoaffineliesuperalgebras}

\subsection{Finite weight modules over affine Lie superalgebras} The affine Lie superalgebra $\cA(\g)$ associated to a basic classical Lie superalgebra $\g$ is the universal central extension of the loop superalgebra $L(\g):=\g\otimes \C[t, t^{-1}]$. Concretely, if $(\cdot ,\cdot )$ denotes an invariant supersymmetric non-degenerate even bilinear form on $\g$, then $\cA(\g)$ is the vector space $L(\g)\oplus \C c$ for an even element $c$, and the bracket is given by 
    \[
[c, \cA(\g)]=0,\quad [x\otimes t^n, y\otimes t^m] = [x,y]\otimes t^{m+n} + n(x,y)\delta_{n,-m}c,
    \]
for any $x\otimes t^n,\ y\otimes t^m\in L(\g)$. Notice that we have a short exact sequence of Lie superalgebras
    \[
0\to \C c\to \cA(\g)\to L(\g)\to 0.
    \]

Recall that $\cA(\h) := \h \oplus \C c$ is a Cartan subalgebra of $\cA(\g)$. It is clear that any $\cA(\g)$-module $V$ on which $c$ acts trivially is naturally a module over the Lie superalgebra $\cA(\g)/\C c\cong L(\g)$. On the other direction, any $L(\g)$-module $V$ can be made into a $\cA(\g)$-module with trivial action of $c$. Let $V_{c=0}$ denote such $\cA(\g)$-module.

The main result of this section is the following:

\begin{theorem}
The assignment that maps an $L(\g)$-module $V$ to the $\cA(\g)$-module $V_{c=0}$ provides a bijection between the set of simple finite weight $L(\g)$-modules and the set of simple finite weight $\mathcal{L}(\g)$-modules. Moreover, this bijection restricts to a correspondence between the respective sets simple cuspidal modules.
\end{theorem}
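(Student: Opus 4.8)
The plan is to reduce the classification of simple finite weight $\cA(\g)$-modules (these are the $\mathcal{L}(\g)$-modules of the statement) to the classification over the bare loop superalgebra $L(\g)$, by proving that any such module has vanishing central charge; everything else is a comparison of structures. First I would check the easy direction: for a simple finite weight $L(\g)$-module $V$, the inflation $V_{c=0}$ along $\cA(\g)\twoheadrightarrow\cA(\g)/\C c\cong L(\g)$ has the same submodules over $\cA(\g)$ as over $L(\g)$ (its action factors through $L(\g)$) and the same $\h$-weight spaces as $V$; hence $V_{c=0}$ is simple and finite weight, and restricting scalars recovers $V$, so $V\mapsto V_{c=0}$ is a well-defined injection. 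It then remains to prove surjectivity, which amounts to showing that the central element $c$ acts as $0$ on every simple finite weight $\cA(\g)$-module $W$: granting this, the $W$-action factors through $L(\g)$ and $W=(W|_{L(\g)})_{c=0}$ with $W|_{L(\g)}$ simple and finite weight.

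The key step is the vanishing of the central charge. Let $W$ be a simple finite weight $\cA(\g)$-module; by the super Schur Lemma~\ref{lemma:lemmadeschur} the even central element $c$ acts by a scalar $\ell\in\k$. The invariant form $(\cdot,\cdot)$ restricts non-degenerately to $\h$ (it is the weight-$0$ block of the $\h$-invariant pairing on $\g$), so, since $\k$ has characteristic zero, I can fix $h\in\h$ with $(h,h)\neq 0$. For $n\neq 0$ the element $h\otimes t^{n}$ commutes with $\h$ and with $c$, hence preserves each finite-dimensional $\h$-weight space $W^{\mu}$; fixing $\mu\in\Supp W$ and taking the trace over $W^{\mu}$ of the relation $[\,h\otimes t,\ h\otimes t^{-1}\,]=(h,h)\,c$ gives $0=(h,h)\,\ell\,\dim W^{\mu}$, forcing $\ell=0$. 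This is the super-affine analogue of the classical fact that a finite weight module over an affine Lie algebra without the degree derivation has level zero.

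For the cuspidal statement I would use that $c$, being central, acts trivially on $\cA(\g)$ under the adjoint action, so the root system of $\cA(\g)$ relative to $\h\oplus\C c$ is canonically $\Delta$, the common root system of $\g$ and $L(\g)$. Consequently a linear functional $l\colon\mathcal{Q}\to\Z$ defines simultaneously a triangular decomposition of $\cA(\g)$ (with $\cA(\g)_T^{\pm}=\g_T^{\pm}\otimes\C[t,t^{-1}]$ and $\cA(\g)_T^{0}=\g_T^{0}\otimes\C[t,t^{-1}]\oplus\C c$) and of $L(\g)$, and properness matches properness. Since $c$ is central, for a simple $\cA(\g)_T^{0}$-module $U$ with $c$ acting by $0$ — equivalently, an inflated simple $L(\g)_T^{0}$-module — $c$ acts by $0$ on $M_T(U)$ and hence on $L_T(U)$; so the parabolic induction of Proposition~\ref{propositionn:generalizedvermatypemodule} over $\cA(\g)$ corresponds, under $V\mapsto V_{c=0}$, to that over $L(\g)$. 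Combined with the key step (which forces $c=0$ on any resulting finite weight module), this yields: $V$ is parabolic induced over $L(\g)$ if and only if $V_{c=0}$ is parabolic induced over $\cA(\g)$; equivalently $V$ is cuspidal if and only if $V_{c=0}$ is.

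I expect the only non-formal ingredient to be the vanishing of the central charge, which the Heisenberg trace argument above settles in essentially one line; the rest — matching weight spaces, root data, triangular decompositions, and induction functors on the two sides — is routine once one notes that $c$ lies in the Cartan, acts trivially adjointly, and (by Schur's Lemma) scalarly on every module in sight.
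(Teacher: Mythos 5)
Your proposal is correct and follows essentially the same route as the paper: the heart of the matter is exactly the trace argument on a finite-dimensional weight space applied to $[h\otimes t, h\otimes t^{-1}]=(h,h)c$ with $(h,h)\neq 0$, combined with Schur's Lemma to know $c$ acts by a scalar. Your additional remarks on matching weight spaces, triangular decompositions and parabolic induction are the routine details the paper leaves implicit when it says the vanishing of $c$ ``implies the second statement.''
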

\begin{proof}
The first statement will follow if we show that $c$ acts trivially on any simple finite weight $\cA(\g)$. To prove this, we notice that $c$ must act as a scalar $k$ on any simple $\cA(\g)$-module by Lemma~\ref{lemma:lemmadeschur}. We claim that $k$ is zero. Indeed, let $h\in \h$ be a nonzero element for which $(h,h)\ne 0$. Let $V^\lambda$ any weight space. Since $\dim V^\lambda <\infty$, we can take the trace of $[h\otimes t, h\otimes t^{-1}]\in \textup{End}_{\kc} (V^\lambda)$, which is zero. On the other hand, the equality
    \[
[h\otimes t, h\otimes t^{-1}] = (h,h)c
    \]
shows that this trace is precisely $\left(\dim V^\lambda\right) (h,h)k$. The first statement follows.

The fact that $c$ acts trivially on any simple finite weight $\cA(\g)$-module implies the second statement.
\end{proof}

\subsection{Bounded modules over affine Kac-Moody Lie superalgebras}
In this subsection we assume that $\g$ is a basic classical Lie superalgebra of type I in order to define analogs of  Kac $\widehat{\g}$-modules in the affine setting. 

Consider the $\Z$-grading $\g = \g_{-1}\oplus \g_0\oplus \g_1$, and recall that $\g_0$ is a reductive Lie algebra with 1-dimensional center, that $\g_{\pm 1}$ are simple $\g_0$-modules, and that $[\g_{\pm 1},\g_{\pm 1}]=\{0\}$. Following \cite{CM21}, for a simple weight $\widehat{\g}_0$-module $S$ we define the \emph{Kac module} associated to $S$ to be the induced $\widehat{\g}$-module
    \[
K(S) := U(\widehat{\g})\otimes_{U(\widehat{\g}_0\oplus \widehat{\g}_1)}S,
    \]
where we are assuming that $\widehat{\g}_1 S=0$. It is easy to prove that $K(S)$ admits a unique simple quotient which we denote by $V(S)$. This defines the \emph{Kac induction functor} from the category of 
$\widehat{\g}_0$-modules to the category of $\widehat{\g}$-modules, and we have the following
 main result of this section.

\begin{theorem}\label{thm:Kac.modules.reduction} Let $\g$ be a basic classical Lie superalgebras of type I. Then the  functor $K$ defines 
  a bijection between the sets of isomorphism classes of simple bounded Harish-Chandra $\widehat{\g}$-modules and 
 simple bounded Harish-Chandra $\widehat{\g}_0$-modules. 
\end{theorem}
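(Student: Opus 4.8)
The approach is the affine analogue of the reductions in \cite{RF09, CM21}, carried out through the $\Z$-grading of $\g$ rather than through a triangular decomposition. Let $z\in\h$ be the grading element (it spans the centre of $\g_0$), so that the distinguished grading of $\g$, and likewise of $\widehat{\g}=\widehat{\g}_{-1}\oplus\widehat{\g}_0\oplus\widehat{\g}_1$ with $\widehat{\g}_{\pm 1}:=\g_{\pm 1}\otimes\C[t,t^{-1}]$, is the $\operatorname{ad}z$-eigenspace decomposition. Since $\g$ has type I, $[\g_{\pm 1},\g_{\pm 1}]=\{0\}$, and because the invariant form pairs $\g^{\alpha}$ only with $\g^{-\alpha}$, also $[\widehat{\g}_{\pm 1},\widehat{\g}_{\pm 1}]=\{0\}$ at every level; so $\widehat{\mathfrak p}:=\widehat{\g}_0\oplus\widehat{\g}_1$ is a subalgebra with abelian nilradical $\widehat{\g}_1$, and $\widehat{\g}_{-1}$ is abelian. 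As $z$ is central in $\widehat{\g}_0$, a simple $\widehat{\g}_0$-module $S$ is $z$-homogeneous, and by PBW $K(S)\cong\Lambda(\widehat{\g}_{-1})\otimes S$ is $\Z$-graded with top ($z$-)degree piece $1\otimes S\cong S$ and everything else in strictly lower degree. Hence, writing $R(S):=\ker(K(S)\twoheadrightarrow V(S))$: the cyclic subspace $1\otimes S$ meets $R(S)$ trivially (by simplicity of $S$), so $S$ embeds into $V(S)$ as the top-degree piece; and if $0\ne w\in V(S)$ is killed by $\widehat{\g}_1$ then $U(\widehat{\g})w=U(\widehat{\g}_{-1})U(\widehat{\g}_0)w$ (by PBW, using $\widehat{\g}_1 w=0$) lies in degrees $\le\deg w$, which by simplicity of $V(S)$ forces $\deg w$ to be the top degree; so $V(S)^{\widehat{\g}_1}=S$. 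Thus $S$ is recovered from $V(S)$, and $S\mapsto V(S)$ is injective on isomorphism classes.

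For surjectivity, let $M$ be a simple uniformly bounded $\widehat{\g}$-module. The key lemma — automatic in \cite{CM21} since there $\g_{-1}$ is finite-dimensional, but requiring an argument here — is that the $\Z$-grading $M=\bigoplus_k M\langle k\rangle$ is bounded above; this should follow, as in \cite{RF09}, from the structure of uniformly bounded modules over affine Lie superalgebras, concretely because the $\h^{*}$-support of $M$ lies in finitely many cosets of the even root lattice while each factor of $\widehat{\g}_1$ moves the $\h^{*}$-support into a further coset, forcing $\widehat{\g}_1^{\,N}M=0$ for $N\gg 0$. Granting it, put $S:=M\langle k_{\max}\rangle\ne 0$; then $\widehat{\g}_1 S=0$, and $U(\widehat{\g})S=U(\widehat{\g}_{-1})U(\widehat{\g}_0)S=U(\widehat{\g}_{-1})S$ is a nonzero $\widehat{\g}$-submodule of $M$, hence equals $M$. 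As $U(\widehat{\g}_{-1})$ only lowers degree, the top-degree piece of $M=U(\widehat{\g}_{-1})S$ is exactly $S$, and applying the same to any nonzero $\widehat{\g}_0$-submodule of $S$ shows $S$ is simple over $\widehat{\g}_0$. Moreover $M=U(\widehat{\g}_{-1})S$ is a quotient of $K(S)$, so $M\cong V(S)$ by simplicity; and $S\subseteq M$ is uniformly bounded with the same bound. Hence $M\mapsto M\langle k_{\max}\rangle$ inverts $S\mapsto V(S)$ on the relevant isomorphism classes.

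What remains — and what I expect to be the main obstacle — is to show that $V(S)$ is uniformly bounded whenever $S$ is; this is genuinely new relative to \cite{CM21}, since $\widehat{\g}_{-1}$ is infinite-dimensional. Write $V(S)=\bigoplus_{j\ge 0}V_j$ for the $z$-grading read from the top down, so that $V_0=1\otimes S\cong S$ and each $V_j$ is a $\widehat{\g}_0$-quotient of $\Lambda^{j}(\widehat{\g}_{-1})\otimes S$, which has infinite-dimensional weight spaces; the task is to bound $\dim V_j^{\mu}$ uniformly in $j$ and $\mu$. The plan is a downward induction on $j$: for $j\ge 1$ the piece $V_j$ has no nonzero $\widehat{\g}_1$-invariants (by the degree argument of the first paragraph), so $\widehat{\g}_1$ separates $V_j$ into $V_{j-1}$, and the point is to show that only boundedly many Fourier modes $x_{\beta}\otimes t^{p}$ are needed, so that the multiplicities do not grow and in fact vanish for $j\gg 0$. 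I would attack this using that $\widehat{\g}_{-1}$ is itself a simple (multiplicity-one) $\widehat{\g}_0$-module — cf.\ Theorem~\ref{thm1:properties.loop.modules} — whence $V_j$ is a $\widehat{\g}_0$-subquotient of the tensor product $\Lambda^{j}(\widehat{\g}_{-1})\otimes S$ of loop-type modules, to which the boundedness criteria of Section~\ref{sectioon:cuspidaluniformboundmodules}, in particular Lemma~\ref{lemma:maximunoneuniformbounded}, can be brought to bear; alternatively, by invoking the explicit description of simple uniformly bounded $\widehat{\g}_0$-modules (Theorem~\ref{theorem:uniformlyboundisevaluation} for $\g_0$ together with the loop-module construction) and analysing $V(S)$ case by case. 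With this established, the previous paragraphs show that $S\mapsto V(S)$ and $M\mapsto M\langle k_{\max}\rangle$ are mutually inverse bijections between isomorphism classes of simple uniformly bounded Harish-Chandra modules over $\widehat{\g}_0$ and over $\widehat{\g}$.
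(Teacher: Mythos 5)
Your overall strategy (recover $S$ as the $\widehat{\g}_1$-invariants sitting at the top of the $z$-grading, realize $M$ as $V(S)$, and invert the Kac functor) is the same as the paper's, but the decisive step is missing. The paper reduces the whole theorem to one lemma: for a simple bounded level-zero $\widehat{\g}$-module $M$, the space $M^{\widehat{\g}_1}$ is \emph{nonzero}; everything else (simplicity and boundedness of $M^{\widehat{\g}_1}$, $M\cong V(M^{\widehat{\g}_1})$, injectivity on isomorphism classes) is treated as standard, much as in your first paragraph. Your route to the key point is the assertion that $\widehat{\g}_1^{\,N}M=0$ for $N\gg 0$, justified by the claim that the support of $M$ lies in finitely many cosets of the even root lattice. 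That claim is not available: by simplicity the support lies in a single coset of the full affine root lattice, and for type I the quotient of that lattice by $\Z\Delta_{\boh}+\Z\delta$ is infinite (the class of an odd root has infinite order), so ``finitely many even-lattice cosets'' is literally equivalent to the $z$-grading of $M$ having finitely many nonzero pieces --- i.e.\ it is a restatement of what you need, not a consequence of bounded multiplicities. Nor does \cite{RF09} supply it: the reductions there concern nonzero level, while the relevant modules here have level zero.

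What the paper does instead is prove the weaker statement $M^{\widehat{\g}_1}\neq 0$ directly, without ever knowing the grading is bounded above, and this is where all the work lies. With $d$ a bound on the weight multiplicities and $\alpha$ an odd root of $\g_1$, one first arranges (using that the $x_{\alpha+s\delta}$ are square-zero and anticommute) a weight vector $v$ with $x_{\alpha+s\delta}v=0$ for $0\le s\le 2d-1$; then, from the linear dependence of $v, x_{\delta}x_{-\delta}v,\dots,x_{d\delta}x_{-d\delta}v$ (boundedness) together with the vanishing of the central term (level zero), one propagates the annihilation step by step until a nonzero vector killed by $x_{\alpha+n\delta}$ for \emph{all} $n\in\Z$ is found, and then iterates over the finitely many odd roots of $\g_1$. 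This explicit multiplicity-versus-level-zero computation is the new idea your proposal lacks; supplying it would also make your ``bounded above'' claim unnecessary. Concerning the direction you flag as the main obstacle (boundedness of $V(S)$ for bounded $S$): the paper does not elaborate on it either, treating that side as routine, so the gap that genuinely separates your proposal from the paper's proof is the nonvanishing of $M^{\widehat{\g}_1}$.
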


Theorem \ref{thm:Kac.modules.reduction} reduces  the classification of simple bounded $\widehat{\g}$-modules to the even part of $\widehat{\g}$. It follows immediately from the following lemma.

\begin{lemma}
Let $M$ be a simple bounded weight $\widehat{\g}$-module of level zero. Then the set of $\widehat{\g}_1$-invariants $M^{\widehat{\g}_1} = \{m\in M\mid \widehat{\g}_1 m =0\}$ is a nonzero simple bounded weight $\widehat{\g}_0$-module and $M\cong V(M^{\widehat{\g}_1})$.
\end{lemma}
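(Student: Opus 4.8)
The plan is to transport the classical Kac-module reduction for type I superalgebras to the affine level-zero setting, with boundedness playing the role that finite-dimensionality plays for $\g$.

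I would first record the structure. For $\g$ of type I with distinguished grading $\g=\g_{-1}\oplus\g_0\oplus\g_1$, the algebra $\widehat{\g}$ inherits a $\Z$-grading with $\widehat{\g}_{\pm1}=\g_{\pm1}\otimes\C[t,t^{-1}]$ and $\widehat{\g}_0=\g_0\otimes\C[t,t^{-1}]\oplus\C c\oplus\C d$. Since $[\g_{\pm1},\g_{\pm1}]=0$ and the invariant form pairs $\g_1$ with $\g_{-1}$, and since $c$ acts by $0$ on $M$, each $\widehat{\g}_{\pm1}$ is an abelian purely odd subalgebra whose homogeneous elements square to zero on $M$; moreover $\widehat{\g}_0$ normalizes $\widehat{\g}_{\pm1}$, $[\widehat{\g}_{-1},\widehat{\g}_1]\subseteq\widehat{\g}_0$, and $\widehat{\g}_0\oplus\widehat{\g}_1$ is a subalgebra. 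By the PBW theorem $K(S)\cong\bigwedge(\widehat{\g}_{-1})\otimes S$ as $\widehat{\h}$-modules, and the canonical surjection $K(S)\twoheadrightarrow V(S)$ restricts to an isomorphism of $\widehat{\g}_0$-modules on the copy $1\otimes S$. Finally, compatibility of the grading with the root space decomposition yields on $M$ (whose support lies in a single coset of the root lattice) a $\Z$-grading $M=\bigoplus_j M\langle j\rangle$ with $\widehat{\g}_iM\langle j\rangle\subseteq M\langle i+j\rangle$.

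The heart of the matter — and the step I expect to be the main obstacle — is to prove $M^{\widehat{\g}_1}\neq 0$. Engel's theorem does not apply directly since $\widehat{\g}_1$ is infinite-dimensional, so one must use boundedness. Each finite-dimensional piece $\bigoplus_{|n|\le k}\g_1\otimes t^n$ acts locally nilpotently with square-zero generators, hence has a nonzero common kernel, and since $\widehat{\g}_1$ is super-commutative such a common kernel can be fed through the remaining generators; the uniform weight bound $\dim M^\lambda\le N$ is then used to see that this cannot continue indefinitely. Concretely, if no nonzero weight vector were killed by all of $\widehat{\g}_1$, then starting from a weight vector $v$ and iterating $\widehat{\g}_1$ one would obtain, for suitable target weights, arbitrarily many distinct monomials $(x_{\alpha_1}\otimes t^{n_1})\cdots(x_{\alpha_r}\otimes t^{n_r})v$ inside a single weight space — the freedom lying in the choice of the $t$-powers — eventually producing more than $N$ linearly independent vectors there, a contradiction. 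Carrying out this weight-multiplicity count carefully is the technical core of the proof.

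Granting $M^{\widehat{\g}_1}\neq 0$, write $S:=M^{\widehat{\g}_1}$. It is $\widehat{\g}_0$-stable because $\widehat{\g}_0$ normalizes $\widehat{\g}_1$, it is a weight module, and $\dim S^\lambda\le\dim M^\lambda\le N$, so it is bounded. For simplicity: if $0\neq N'\subseteq S$ is a $\widehat{\g}_0$-submodule, then $\widehat{\g}_1N'=0$ and $\widehat{\g}_0N'\subseteq N'$ give $U(\widehat{\g})N'=U(\widehat{\g}_{-1})N'$ by PBW, so $M=U(\widehat{\g}_{-1})N'$ by simplicity of $M$; an induction on the $\widehat{\g}_{-1}$-degree of PBW monomials, commuting $\widehat{\g}_1$ past $\widehat{\g}_{-1}$-letters through $[\widehat{\g}_1,\widehat{\g}_{-1}]\subseteq\widehat{\g}_0$ exactly as in Proposition~\ref{propositionn:generalizedvermatypemodule}, then gives $S\cap U(\widehat{\g}_{-1})N'=N'$, whence $N'=S$. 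Finally, $S\hookrightarrow M$ is a $\widehat{\g}_0$-map with $\widehat{\g}_1S=0$, so by Frobenius reciprocity it extends to a nonzero $\widehat{\g}$-map $K(S)\to M$, which is onto since $M$ is simple; hence $M$ is a simple quotient of $K(S)$ and $M\cong V(S)=V(M^{\widehat{\g}_1})$. The same PBW-degree argument applied to $K(S)$ gives $V(S)^{\widehat{\g}_1}=S$, and a symmetric count shows $V(S)$ is bounded, yielding the mutually inverse bijections of Theorem~\ref{thm:Kac.modules.reduction}.
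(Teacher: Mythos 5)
The routine parts of your argument (that $S=M^{\widehat{\g}_1}$ is $\widehat{\g}_0$-stable, bounded, simple, and that Frobenius reciprocity gives $M\cong V(S)$) are fine and match what the paper dismisses as ``standard arguments.'' But the paper itself isolates exactly one nontrivial claim --- ``it is enough to show that the vector space $M^{\widehat{\g}_1}$ is nonzero'' --- and that is precisely the step you do not prove. Your sketch for it is: iterate elements of $\widehat{\g}_1$ on a weight vector and produce, via the freedom in the $t$-powers, more than $N$ monomials in a single weight space, contradicting boundedness. As stated this does not work: producing many \emph{monomials} $(x_{\alpha_1}\otimes t^{n_1})\cdots(x_{\alpha_r}\otimes t^{n_r})v$ in one weight space says nothing unless you can show that sufficiently many of them are nonzero and \emph{linearly independent}, and the hypothesis ``$M^{\widehat{\g}_1}=0$'' only guarantees that for each vector \emph{some} element of $\widehat{\g}_1$ acts nonzero --- it gives you no control over which monomials survive or over dependences among them. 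Likewise, your Engel-type observation gives a nonzero common kernel for each finite truncation $\bigoplus_{|n|\le k}\g_1\otimes t^n$, but the resulting vectors $v_k$ need not stabilize as $k\to\infty$, and bridging that is the whole difficulty.

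The paper's actual argument is of a different nature and does not count monomials at all. It works one odd root $\alpha$ of $\g_1$ at a time: starting from $v$ with $x_{\alpha+s\delta}v=0$ for $s=0,\dots,2d-1$ (where $d$ bounds the weight multiplicities), it exploits linear dependences among the $d+1$ vectors $v, x_{\delta}x_{-\delta}v,\dots,x_{d\delta}x_{-d\delta}v$ --- note the essential use of the \emph{imaginary root vectors} $h\otimes t^{\pm l}$, which are absent from your sketch --- and applies suitable $x_{\alpha+(2d-l)\delta}$ to these relations to force $x_{-l\delta}w=0$ for $w=x_{\alpha+2d\delta}v$, then bootstraps via a second dependence argument to kill $x_{\alpha+n\delta}$ for all $n\in\Z$, and finally inducts over the finitely many odd roots of $\g_1$. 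Without an argument of this kind (or a genuinely worked-out independence count), your proposal has a gap at its acknowledged ``technical core.''
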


\begin{proof}
It is enough to show that the vector space $M^{\widehat{\g}_1}$ is nonzero as the rest of the proof will follow by standard arguments.

Assume that all dimensions are bounded by $d\geq 1$, and fix some odd root $\alpha$ of $\g_1$. Without loss of generality assume that we have a weight vector $v\neq 0$ such that $x_{\alpha+s\delta}v=0$ for all $s=0, \ldots 2d-1$. Suppose $w=x_{\alpha+{2d}\delta}v\neq 0$, and consider the vectors
    \[
v, x_{\delta}x_{-\delta}v, \ldots, x_{d\delta}x_{-d\delta} v.
    \]
Since these vectors are linearly dependent, there exists $l\leq d$ such that $x_{l\delta}x_{-l\delta} v=\sum_{i=1}^{l-1} c_i x_{i\delta}x_{-i\delta} v +c_0 v$. Apply $x_{\alpha+(2d-l)\delta}$ on this equation. On the right hand side we obtain $0$, and on the left hand side:
    \[
x_{\alpha+(2d-l)\delta} x_{l\delta}x_{-l\delta} v=x_{\alpha+2d\delta}x_{-l\delta} v=x_{-l\delta}x_{\alpha+2d\delta}v.
    \]
Thus $x_{-l\delta}x_{\alpha+2d\delta}v=x_{-l\delta}w=0$. From this we conclude that $x_{\alpha+s\delta}w=0$ 
for all $s\leq 2d$.

Now, choose the smallest $N\in \Z_{>2d}$ such that $x_{\alpha+N\delta}w\neq 0$, and consider the linearly dependent elements of $M$
    \[
x_{ld\delta}x_{\alpha+N\delta}w, \, x_{l(d-1)\delta}x_{\alpha+(N+l)\delta}w, \ldots, x_{l\delta}x_{\alpha+(N+l(d-1))\delta}w, \, x_{\alpha+(N+ld)\delta} w,
    \]
where $1\leq l\leq d$ was fixed above.    Note that $x_{\alpha+(N+lm)\delta}w\neq 0$ for all $0\leq m\leq d$, since otherwise $x_{-l\delta}^m x_{\alpha+(N+lm)\delta}w$ equals  $x_{\alpha+N\delta}w$ up to a nonzero scalar and hence $x_{\alpha+N\delta}w=0$, which is a contradiction.

Assume that for $0\leq k\leq d$ holds
    \[
x_{l(d-k)\delta}x_{\alpha+(N+lk)\delta}w=\sum_{i=0}^{k-1} c_i x_{l(d-i)\delta} x_{\alpha+(N+il)\delta} w
    \]
and apply $x_{-l\delta}^k$  on this equation. 

Since  $c$ acts trivially on $M$, $x_{-l\delta}w=0$ and $$x_{l(d-i)\delta}x_{\alpha+(N+(i-k)l)\delta} w=0$$ for all $i<k$, we have that
$x_{-l\delta}^k x_{l(d-k)\delta}x_{\alpha+(N+lk)\delta}w=0$. But the left hand side equals 
$x_{l(d-k)\delta}x_{\alpha+N\delta}w$ up to a nonzero scalar.
 Since $v'=x_{\alpha+N\delta}w\neq 0$ by assumption, we obtain $x_{l(d-k)}v'=0$ for some $0\leq k< d$. Taking into account that $x_{-l\delta}v'=x_{-l\delta}x_{\alpha+N\delta}w=0$
we conclude that $x_{\alpha+n\delta}v'=0$ for all $n\in \Z$. 

Repeating the argument above (but now starting with $v'$ instead of $v$) for an odd root $\gamma\neq \alpha$ of $\g_1$ will yield a nonzero vector $v''$ such that $x_{\alpha+n\delta}v''=x_{\gamma+n\delta}v''=0$ for all $n\in \Z$. 
Since the set of roots of $\g_1$ is finite, after finitely many steps we find a nonzero element of $M^{\widehat{\g}_1}$, as we wanted.

Finally, to conclude the proof we observe that if $x_{\alpha+n\delta}v = 0$ for all $n\in \Z_{\geq 0}$, that is, if there is no such vector $w$, then the same argument as above, with positive multiples of $\delta$ replaced by negative multiples of $\delta$, will prove the statement. The proof is now complete.
\end{proof}

\begin{remark}
Notice that Theorem~\ref{thm:Kac.modules.reduction} reduces the classification of all simple bounded weight $\widehat{\g}$-modules to the classification of the same class of modules over $\widehat{\g}_0$. Therefore, the combination of our results with those in the unpublished work  \cite{DG10} provides a classification of all simple bounded weight $\widehat{\g}$-modules.
\end{remark}

\section*{Acknowledgments}	
L. C. was supported by the Fapemig (APQ-02768-21). V.F. was supported in part by NSF China - 12350710178.   H.\, R.\, was supported in part by the Coordenação de Aperfeiçoamento de Pessoal de Nível Superior - Brasil (Capes) - Finance Code 001 and by the Fapesp (2020/13811-0). This work was supported in part by the CNPq (402449/2021-5).


\end{document}